\tikzstyle{Vertex}=[fill=white, draw=black, shape=circle]
\tikzstyle{base vertex}=[fill=white, draw=black, shape=diamond]
\tikzstyle{Arrow edge}=[->]
\newcommand{\oq}{\ {\raise 7pt\hbox{${\scriptstyle\circ}$}}
\kern -7pt{%\lower 2pt
\hbox{$Q$}}}
\newcommand{\R}{ \mathbb R}
\newcommand{\Q}{ \mathbb Q}
\newcommand {\ba}{\mathbf a}
\newcommand {\bk}{\mathbf k}
\newcommand {\bm}{\mathbf m}
\newcommand {\bze}{\mathbf 0}
\newcommand {\bn}{\mathbf n}
\newcommand{\CL}{\mathcal L}
\newcommand{\CP}{\mathcal P}
\newcommand{\CQ}{\mathcal Q}
\DeclareMathOperator{\range}{{range}}
\newcommand{\1}
\DeclareMathOperator {\dist} {{dist}}
\DeclareMathOperator{\reg}{{reg}}
\DeclareMathOperator{\safe}{{safe}}
\newtheorem{thm}{Theorem}[section]
\newtheorem{cor}[thm]{Corollary}
\newtheorem{cla}[thm]{Claim}
\newtheorem{lem}[thm]{Lemma}
\newtheorem{prop}[thm]{Proposition}
\theoremstyle{definition}
\newtheorem{defn}[thm]{Definition}%[section]
\newtheorem*{remark}{Remark}
\newtheorem{rem}[thm]{Remark}
\numberwithin{equation}{section}
\newcommand{\bee}{\begin{equation}}
\newcommand{\ene}{\end{equation}}
\newcommand{\bees}{\begin{equation*}}
\newcommand{\enes}{\end{equation*}}
\newcommand{\bes}{\begin{split}}
\newcommand{\ens}{\end{split}}
\newcommand{\bet}{\begin{thm}}
\newcommand{\ent}{\end{thm}}
\newcommand{\bel}{\begin{lem}}
\newcommand{\enl}{\end{lem}}
\newcommand{\bec}{\begin{cor}}
\newcommand{\enc}{\end{cor}}
\newcommand{\becl}{\begin{cla}}
\newcommand{\encl}{\end{cla}}
\newcommand{\bep}{\begin{proof}}
\newcommand{\enp}{\end{proof}}
\newcommand{\ber}{\begin{rem}}
\newcommand{\enr}{\end{rem}}
\newcommand{\ep}{\varepsilon}
\newcommand{\la}{\lambda}
\newcommand{\Z}{\mathbb Z}
\def\square{\RIfM@\bgroup\else$\bgroup\aftergroup$\fi
  \vcenter{\hrule\hbox{\vrule\@height.6em\kern.6em\vrule}\hrule}\egroup}
\def\level{\mathtt{level}}
\def\dio{\mathrm{dio}}
\def\maxlevel{\mathtt{maxlevel}}
\def\denominators{\mathtt{den}}
\def\den{\mathtt{den}}
\def\mbe{M_{\beta}}
\def\totallevel{\mathtt{totallevel}}
\def\cont{\mathtt{Cont}}
\def\range{\mathtt{Range}}
\def\loops{\mathtt{loops}}
\def\nblevel{\mathtt{nblevel}}
\def\height{\mathtt{height}}
\def\nbloops{\mathtt{nbloops}}
\def\downedges{\mathtt{downedges}}
\def\singdownedges{\mathtt{singdownedges}}
\def\singden{\mathtt{singden}}
\def\height{\mathtt{height}}
\def\safedist{\mathtt{safedist}}
\begin{document}

\title[Perturbation theory]
{Perturbation theory}
\title[Convergence of perturbation theory]
{Convergence of perturbation series for unbounded monotone quasiperiodic operators}
\author[I. Kachkovskiy]
{Ilya Kachkovskiy}
\address{Department of Mathematics\\ Michigan State University\\
Wells Hall, 619 Red Cedar Rd\\ East Lansing, MI\\ 48824\\ USA}
\email{ikachkov@msu.edu}
\author[L. Parnovski]
{Leonid Parnovski}
\address{Department of Mathematics\\ University College London\\
Gower Street\\ London\\ WC1E 6BT\\ UK}
\email{leonid@math.ucl.ac.uk}
\author[R. Shterenberg]
{Roman Shterenberg}
\address{Department of Mathematics\\ University of Alabama, Birminghan\\
Campbell Hall\\1300 University Blvd\\ Birmingham, AL\\ 35294\\USA }
\email{shterenb@math.uab.edu}

%\author[L. Parnovski]
%{Leonid Parnovski}
%\address{Department of Mathematics\\ University College London\\
%Gower Street\\ London\\ WC1E 6BT\\ UK}
%\email{Leonid@math.ucl.ac.uk}

%\keywords{Periodic operators}
%\subjclass[2000]{Primary 35P20, 47G30, 47A55; Secondary 81Q10}
%\thanks{This work is supported by the Royal Society}
%\copyrightinfo{2002}{Alexander V. Sobolev}

\date{\today}

%\begin{abstract}
%\input polyabstract.tex
%\end{abstract}
%\begin{abstract}
%We consider a periodic pseudodifferential operator $H=(-\Delta)^l+B$ ($l>0$) in $\R^d$ which satisfies the
%following conditions: (i) the symbol of $H$ is smooth in $x$, and (ii) the perturbation
%$B$ has order smaller than $2l$. Under these assumptions, we prove that the spectrum
%of $H$ contains a half-line.

%\noindent {{\bf AMS Subject Classification:} 35P20 (58J40, 58J50, 35J10)}
%\nl

%{{\bf Keywords:} Bethe-Sommerfeld conjecture, periodic problems, pseudo-differential operators,
%spectral gaps}
%\end{abstract}
\begin{abstract}
We consider a class of unbounded quasiperiodic Schr\"odinger-type operators on $\ell^2(\Z^d)$ with monotone potentials (akin to the Maryland model) and show that the Rayleigh--Schr\"odinger perturbation series for these operators converges in the regime of small kinetic energies, uniformly in the spectrum. As a consequence, we obtain a new proof of Anderson localization in a more general than before class of such operators, with explicit convergent series expansions for eigenvalues and eigenvectors. This result can be restricted to an energy window if the potential is only locally monotone and one-to-one. A modification of this approach also allows the potential to be non-strictly monotone and have a flat segment, under additional restrictions on the frequencies.
\end{abstract}
\maketitle
\section{Introduction}
In this paper, we consider a class of quasiperiodic Schr\"odinger-type operators on $\ell^2(\Z^d)$
\bee
\label{eq_h_def}
(H(x_0)\psi)_{\bn}=\varepsilon\sum_{\bm\in \Z^d}\varphi_{\bn-\bm}\psi_{\bm}+f(x_0+\bn\cdot\omega)\psi_{\bn}.
\ene
The frequency vector $\omega\in (-1/2,1/2)^d$ will always satisfy the assumption that $1,\omega_1,\ldots,\omega_d$ are rationally independent. The parameter $x_0\in \R\setminus(\Z+1/2+\Z^d\cdot\omega)$ is the quasiperiodic phase. We will always assume $\varphi\in\ell^1(\Z^d)$, so that the first term in the right hand side (usually referred to as hopping or kinetic term) is a bounded operator on $\ell^2(\Z^d)$. Most of the time we will assume that $\varphi_{\bn}\neq 0$ only for finitely many $\bn\in \Z^d$. The most common example is the discrete nearest-neighbor Laplace operator:
\bee
\label{eq_discrete_laplacian}
\varphi=\sum\limits_{\bn\colon |\bn|_1=1} e_{\bn},
\ene
where $\{e_{\bn}\colon \bn\in\Z^d\}$ is the standard basis in $\ell^2(\Z^d)$ and $|\cdot|_1$ denotes the $\ell^1$-norm. 

The real-valued function $f$ is initially defined on $(-1/2,1/2)$; we assume it to be continuous and satisfy
$$
f(-1/2+0)=-\infty,\quad f(1/2-0)=+\infty.
$$
We then extend $f$ periodically into $\R\setminus(\Z+1/2)$. The assumptions on $x_0$ and $\omega$ imply that \eqref{eq_h_def} is a well-defined unbounded self-adjoint operator on $\ell^2(\Z^d)$. In the present work, the most important assumption will be that $f$ is non-decreasing on $(-1/2,1/2)$. In most of the paper, we assume that $f$ is strictly increasing, with the derivative bounded from below, but in the last section $f$ will be allowed to have flat pieces. 

Spectral theory of operators \eqref{eq_h_def} with strictly monotone functions $f$ dates back to the classical Maryland model \cite{GFP,Simon_maryland,FP}, where $f(x)=\tan(\pi x)$ and the kinetic term is the discrete Laplacian \eqref{eq_discrete_laplacian}. Similarly to many other quasiperiodic models, the spectral types of such operators depend on the arithmetic properties of the frequencies. We will call the frequency vector $\omega$ {\it Diophantine} if, for some $C_{\dio}>0$ and $\tau>d+1$ we have
\bee
\label{eq_diophantine_definition}
\|\bn \cdot\omega\|:=\dist(\bn\cdot\omega,\Z)\ge C_{\mathrm{dio}}|\bn|^{-\tau},\quad \forall \bn\in \Z^d\setminus\{\bze\}.
\ene
It is known that for all $\varepsilon>0$, all Diophantine $\omega$, and all values of $x_0$, the Maryland model has Anderson localization, that is, dense pure point spectrum and exponentially decaying eigenfunctions. In $d=1$, a complete description of the spectral type (in particular, the transitions between pure point and singular continuous spectra depending on whether the frequency is closer to a Diophantine or a Liouville irrational number) was obtained in in \cite{Wencai}. See also \cite{JY2} for an alternative proof in the Diophantine setting.
The case of more general $f$ was studied in \cite{Bellissard}, and it was shown that the operator has Anderson localization for Diophantine frequencies and $0<\varepsilon<\varepsilon_0(\omega,f,\varphi)$. The function $f$ was assumed to be strictly monotone and to have a meromorphic continuation into a strip in $\mathbb C$. 
The approach of \cite{Bellissard} is based on a KAM-type scheme: a diagonalization of $H(x_0)$ is constructed via an infinite sequence of unitary transformations. In a recent work \cite{Ilya}, Anderson localization for $d=1$ is shown for all $\varepsilon>0$ and all Diophantine frequencies, under the assumption that $\log|f|\in L^1(-1/2,1/2)$. However, the non-perturbative method of \cite{Ilya}, based on \cite{JK}, cannot be extended to $d>1$. We also mention \cite{JY}, where singular continuous spectrum of operators with meromorphic  potentials is studied.

In the present paper, we study Anderson localization for \eqref{eq_h_def} in the same perturbative setting as \cite{Bellissard}, that is, for $|\varepsilon|<\varepsilon_0(\omega,f,\varphi)$. Our conditions on $f$ include all meromorphic functions from \cite{Bellissard}, but are formulated completely in terms of the first derivative of $f$. We believe that the most interesting aspect of our paper is the method: we construct explicit series for eigenvalues and eigenfunctions using the standard perturbation theory, and, to our surprise, in both cases {\it the Rayleigh-Schr\"odinger perturbation series converge} as is, without the need of multiple KAM-type steps. As a consequence, we are able to write down complete representations of eigenvalues and eigenvectors in terms of converging power series in $\varepsilon$. Compared to the previous work, our method also covers two new cases (the summary of the main results is provided in the next section):
\begin{itemize}
	\item The case when $f$ is monotone and one-to-one on an interval $(a,b)\subset (-1/2,1/2)$, with some regularity properties outside of $(a,b)$ (including $f^{-1}(f(a,b))=(a,b)$), but does not have to be monotone outside of $(a,b)$. In this case, we obtain localization on the energy interval slightly smaller than $f(a,b)$, see Theorem \ref{th_local_localization}.
	\item The case when $f$ has a flat segment, but is Lipschitz monotone outside of that segment. In this case, under some additional assumptions on the length of the segment and the frequencies, we can obtain complete Anderson localization, see Theorem \ref{th_main_flat}.
\end{itemize}

To our best knowledge, this is the first class of examples of convergent perturbation series in the context of Schr\"odinger operators with dense eigenvalues. A related phenomenon for the classical KAM was observed in \cite{Eliasson} in the context of ODEs (which does not appear to be related to monotonicity). Our proof is based on careful observation of cancellations between terms of the perturbation series with the same power of $\varepsilon$ (see Remarks \ref{rem_summary1}, \ref{rem_summary2}, \ref{rem_summary3} for the outline of the cancellation mechanism). While the convergence results use the structure \eqref{eq_h_def}, the actual combinatorial procedure of grouping terms in order to prepare them for cancelling can be formulated in a more abstract context of arbitrary lattice Schr\"odinger-type operators. We believe that this procedure may be of independent interest. For example, for bounded monotone discontinuous potentials such as $f(x)=\{x\}$ the perturbation series will converge on a large (but not full measure) subset of phases $x_0$ and thus produce a large number of exponentially decaying eigenfunctions, whose contribution to the integrated density of states will approach $1$ as $\varepsilon\to 0$. However, the remaining ``resonant'' phases will require further analysis, possibly by different methods. We intend to explore appropriate modifications of our method in order to study localization and spectral gaps of these operators in subsequent publications. One can also draw some parallels between our procedure and renormalization techniques used in statistical physics, such as the work \cite{Mastroprietro} where Anderson localization in terms of two-particle functions was studied for the quasiperiodic Holstein model.

\vskip 1mm

\subsection*{Acknowledgments} The authors are grateful to David Damanik and Leonid Pastur  for useful discussions, and to the anonymous referee whose suggestions, hopefully, made the text substantially more readable. The research of LP was partially supported by EPSRC grants EP/J016829/1 and EP/P024793/1. RS was partially supported by NSF grant DMS--1814664. IK was partially supported by NSF grant DMS--1846114.

%In Section 2, we work with abstract (determinitstic) Schr\"odinger operators on $\ell^2(\Z^d)$ and construct the formal Rayleigh--Schr\"odinger perturbation series, interpreting its terms as paths on a specially constructed graph. Starting from Section 2.4, we set up the abstract cancellation procedure, which is motivated by the quasiperiodic case but may be useful in a wider class of examples. In Section 3, we start applying the abstract scheme to our operator \eqref{eq_h_def} and summarize the assumptions on the function $f$ in order for the scheme to work. In Section 4, we establish the main estimate (Theorem \ref{th_loop_stack}) which implies Anderson localization in the strictly non-decreasing case. In Section 5, we generalize the results of the previous sections to the infinite range case, where the hopping term is also a series in $\varepsilon$. We also consider the case of variable coefficients in the hopping term. In Section 6, we discuss the case of a potential with a flat segment. We verify that, under several assumptions, it can be transformed to an operator for which the results of Section 5 apply.

%The main results of the paper are Theorem \ref{main} and Corollary \ref{cor_psi}. 
\section{Rayleigh--Schr\"odinger perturbation series and main results}
\subsection{The perturbation series}
Let $V$ be a self-adjoint multiplication operator on $\ell^2(\Z^d)$, not necessarily bounded:
\bee
\label{eq_v_def}
(Vu)_\bn=V_{\bn} u_{\bn},\quad u=\sum\limits_{\bn\in \Z^d} u_{\bn} e_{\bn},
\ene
where $\{e_{\bn}:\bn\in \Z^d\}$ is the standard basis in $\ell^2(\Z^d)$. Let also $\Phi$ be a T\"oplitz-type operator: for a sequence $\{\varphi_{\bn}\}_{\bn\in \Z^d}$, we define 
\bee
\label{eq_phi_def}
(\Phi u)_{\bn}=\sum\limits_{\bm\in\Z^d}\varphi_{\bn-\bm}u_{\bm}.
\ene
We will assume that $\Phi$ is self-adjoint:
\bee
\label{eq_phi_sa_def}
\Phi_{\bm\bn}=\overline{\Phi_{\bn\bm}},\,\,\text{ that is, }\,\,\varphi_{-\bn}=\overline{\varphi_{\bn}}.
\ene
In Sections 2 -- 5, we also require that $\varphi_{\bn}\neq 0$ only for {\it finitely many} lattice points $\bn\in \Z^d$, and $\varphi_{\bze}=0$. In Section 6, this will be generalized to an infinite range case in the quasiperiodic setting. Define a family of operators
\bee
H=V+\ep \Phi,
\ene
parametrized by $\varepsilon>0$. We will not emphasize the dependence on $\varepsilon$ in the expression for $H$, as long as it is clear from the context. Fix some $\bn_0\in \Z^d$ and assume that 
\bee
\label{eq_v_nonresonant}
\quad V_{\bn}\neq V_{\bn_0}\text{ for }\bn\neq \bn_0.
\ene
In other words, the potential attains the value $V_{\bn_0}$ only at the lattice point $\bn_0$. We will call this the {\it algebraic non-resonant condition at $\bn_0$}.

If $\varepsilon=0$, then the operator $H$ has an eigenvalue $V_{\bn_0}$ with an eigenvector $e_{\bn_0}$. {\it Rayleigh--Schr\"odinger series} is a formal perturbation-theoretic expansion, which represents how that eigenvalue changes after the perturbation with $\varepsilon>0$. In order to construct these series, consider the eigenvalue equation
\bee
\label{eq_rs_eigenequation}
H\psi=\lambda\psi,
\ene
where both $\lambda$ and $\psi$ are formal series in $\ep$:
\bee
\label{eq_rs_lambda}
\lambda=\la_0+\ep\la_1+\ep^2\lambda_2+...,
\ene
\bee
\label{eq_rs_psi}
\psi=\psi_0+\ep\psi_1+\ep^2\psi_2+...
\ene
with $\la_0=V_{\bn_0}$ being the unperturbed eigenvalue and $\psi_0=e_{\bn_0}$ the unperturbed eigenfunction. Then, one considers \eqref{eq_rs_eigenequation} as an equality of formal series
\bee
\label{def_rs_series}
(V+\ep \Phi)(\psi_0+\ep\psi_1+...)=(\la_0+\ep\la_1+...)(\psi_0+\ep\psi_1+...);
\ene
in other words, an infinite system of equations obtained by comparing the coefficients at each power of $\ep$ (note that each equation will have finitely many terms). We will impose the following normalization conditions:
\bee
\psi_j\perp \psi_0,\ \ j>0.
\ene
In other words, all the correction terms are orthogonal to $e_{\bn_0}$.

It is well known and easy to see that, under the above assumptions on $V$ and $\Phi$, both $\lambda_j$ and $\psi_j$ can be uniquely determined from equating the coefficients in \eqref{def_rs_series} at each power of $\varepsilon$; see also Theorem \ref{th_lak} below. One can start these expansions at any lattice site $\bn_0$ as long as it satisfies the algebraic non-resonance condition.

\subsection{Summary of the main results} The results of the present paper rely on certain regularity conditions on the function $f$, which can be stated in terms of the first derivative of $f$ and are discussed in detail in Section 5. Any meromorphic strictly monotone function $f$, such as in \cite{Bellissard}, satisfies these conditions, and one can state the following theorem in a self-contained form as a consequence of our results.
\begin{thm}
\label{th_main_meromorphic}
Let $f\colon (-1/2,1/2)\to \mathbb R$ be a strictly increasing function satisfying $f(-1/2+0)=-\infty$, $f(1/2-0)=+\infty$ and admitting a meromorphic $1$-periodic extension to a neighborhood of the real line in $\mathbb C$. Suppose that the frequency vector $\omega$ satisfies the Diophantine condition $\eqref{eq_diophantine_definition}$
$$
\|\bn \cdot\omega\|=\dist(\bn\cdot\omega,\Z)\ge C_{\mathrm{dio}}|\bn|^{-\tau},\quad \forall \bn\in \Z^d\setminus\{\bze\}.
$$
Let $H(x_0)$ be the operator \eqref{eq_h_def} with $\varphi_{\bn}\neq 0$ for finitely many points $\bn\in\Z^d$. There exists $\ep_0=\ep_0(d,f,\varphi,C_{\dio},\tau)>0$ such that, for any $\ep\in[0,\ep_0)$, any $x_0\in \R\setminus(\Z+1/2+\Z^d\cdot\omega)$, and any $\bn_0\in \Z^d$, the Rayleigh--Schr\"odinger perturbation series for the eigenvalues and eigenvectors of $H(x_0)$, started at $\bn_0$, converge and provide a complete orthogonal system of eigenvectors for $H(x_0)$, parametrized by $\bn_0\in\Z^d$.
\end{thm}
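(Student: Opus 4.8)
The plan is to deduce Theorem \ref{th_main_meromorphic} from a general convergence statement for the abstract operators $H=V+\ep\Phi$ of Section 2 by verifying that the potential $V_{\bn}=f(x_0+\bn\cdot\omega)$ generated by a meromorphic strictly monotone $f$ satisfies the regularity hypotheses formulated in Section 5. The first step is to record the purely one-dimensional input about $f$. Since $f$ is $1$-periodic, meromorphic near $\R$, strictly increasing on $(-1/2,1/2)$ with poles at the half-integers, the derivative $f'$ is meromorphic and positive on $(-1/2,1/2)$, so its zeros are isolated, of finite (even) order, and finite in number on $(-1/2,1/2)$. Hence on every compact subinterval $f$ is bi-H\"older: there are $N\ge 1$ and $c>0$ with $|f(x)-f(y)|\ge c|x-y|^{N}$, while near the poles $|f(x)-f(y)|$ only gets larger, and a matching upper bound holds away from the poles. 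This quantitative monotonicity/distortion estimate, together with a bound on the number of lattice translates $\bn$ with $|\bn|\le R$ landing in a short energy window around $V_{\bn_0}$ (obtained from the count $\#\{\bn:\{x_0+\bn\cdot\omega\}\in f^{-1}(\text{interval})\}$ and Diophantine equidistribution), is exactly what the hypotheses of Section 5 abstract. Combined with \eqref{eq_diophantine_definition} it yields the effective lower bound $|V_{\bn}-V_{\bm}|\gtrsim|\bn-\bm|^{-\tau N}$ on the denominators entering the Rayleigh--Schr\"odinger recursion, uniformly in $x_0$ and in the base point $\bn_0$, while $\varphi_{\bze}=0$ gives the algebraic non-resonance \eqref{eq_v_nonresonant} at every site.

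With the hypotheses of Section 5 in force, Theorem \ref{th_main_meromorphic} reduces to the main abstract convergence result of Sections 2--5, whose proof I would organize as follows. First, solve the recursion obtained from \eqref{def_rs_series} for $\lambda_k$ and $\psi_k$ (cf.\ Theorem \ref{th_lak}), so that each $(\psi_k)_{\bn}$, and each $\lambda_k$, is expressed as a finite sum over decorated paths in $\Z^d$ joining $\bn_0$ to $\bn$ (respectively returning to $\bn_0$): each term is a product of hopping amplitudes $\varphi$ along the edges, divided by a product of denominators $V_\bullet-V_{\bn_0}$, with insertions of the lower-order corrections $\lambda_j$ acting as counterterms. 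Second, regroup the terms carrying a fixed power of $\ep$ so that the contributions with dangerously small denominators cancel against one another; this is the combinatorial heart of the argument (Remarks \ref{rem_summary1}, \ref{rem_summary2}, \ref{rem_summary3}), and it is precisely here that the monotonicity of $f$ — that is, the sign structure of $V_\bullet-V_{\bn_0}$ — is used. Third, after cancellation, estimate the surviving grouped contribution of order $\ep^k$ by $C^k$, using the finite range of $\Phi$ to bound the number of admissible paths and the bounds from Section 5 to control the residual (non-small) denominators, thereby proving convergence of \eqref{eq_rs_lambda} and \eqref{eq_rs_psi} for $|\ep|<\ep_0$ with $\ep_0$ depending only on the stated data. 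Finally, check that the resulting $\lambda,\psi$ solve $H(x_0)\psi=\lambda\psi$, that $\psi$ decays exponentially (the denominators grow as one moves away from $\bn_0$, since $f$ is unbounded), and that, as $\bn_0$ ranges over $\Z^d$, the eigenvectors so produced are orthogonal and complete: orthogonality follows from self-adjointness once the eigenvalues are seen to be pairwise distinct (a consequence of monotonicity and of $\ep$ being small relative to the local separation of the $V_{\bn}$), and completeness from matching, in every energy window, the number of constructed eigenvalues with the corresponding lattice-point count.

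The main obstacle is the second step: arranging the sum over paths so that the small-denominator divergences genuinely cancel at each order in $\ep$, rather than merely being estimated. A naive triangle-inequality bound on the path sum diverges because of arbitrarily small $|V_{\bn}-V_{\bn_0}|$; the delicate point is to identify the right equivalence classes of paths (equivalently, the right resummation of resonant sub-excursions) whose signed contributions, by virtue of monotonicity, combine into something with effectively non-small denominators, and to carry this out uniformly enough that the post-cancellation bound remains geometric in $k$. The one-dimensional analysis of $f$, the exponential decay, and the completeness are comparatively routine once this convergence estimate is established.
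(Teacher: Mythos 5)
Your reduction of Theorem \ref{th_main_meromorphic} to an abstract convergence result (Section 5), and your overall outline (path expansion $\to$ regrouping $\to$ post-cancellation estimate $\to$ spectral conclusions), do follow the shape of the paper's argument. However, the step you yourself flag as the ``combinatorial heart'' is misdescribed, and the one-dimensional input you propose to feed into the machine is not the one the paper actually needs.

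The cancellation in the paper is not a sign cancellation driven by ``the sign structure of $V_\bullet-V_{\bn_0}$''. After paths are grouped into translational equivalence classes (Definition \ref{def_trans_eq}), the contribution of a class factorizes over maximal loop stacks (equation \eqref{eq_end_sec2}, and within each stack as in \eqref{eq_prod}) into products of the form $(V_{\bze}-V_{\bn_j}(t))^{-1}\bigl(\cont([\CP_j],t+\bn_j\cdot\omega)-\cont([\CP_j],t)\bigr)$. What makes this harmless is not a sign, but a \emph{Lipschitz estimate in the quasiperiodic phase}: the difference is bounded by $\|\bn_j\cdot\omega\|$ times a derivative bound on $\cont([\CP_j],\cdot)$ (parts (2) of Lemma \ref{lemma_safe_loop_contrib} and Theorem \ref{th_loop_stack}), and the quotient $\|\bn_j\cdot\omega\|/|V_{\bze}-V_{\bn_j}|$ is controlled by $1/D_{\min}$, which requires a Lipschitz lower bound $f'\ge D_{\min}>0$. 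Monotonicity enters through this lower bound (no resonances, and small $|V_{\bze}-V_{\bn}|$ occurs exactly when $\|\bn\cdot\omega\|$ is small), not through alternating signs. Your proposal never produces the derivative estimate on the grouped contribution, so the post-cancellation bound remains unjustified. Relatedly, your ``bi-H\"older'' fallback $|f(x)-f(y)|\ge c|x-y|^N$ for $f'$ with zeros does not feed into the paper's machinery: the consistent denominator data of Section 5.2 indexes smallness by $\|\bn\cdot\omega\|$, not by $|V_{\bn}-V_{\bm}|$, and (cr1) requires $f'$ to be bounded below and to oscillate by a bounded factor --- i.e.\ $f'>0$ on $(-1/2,1/2)$, as the Remark after (cr2) makes explicit. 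Finally, your completeness argument (matching eigenvalue and lattice counts in energy windows) is not the paper's; Corollary \ref{cor_psi} instead builds the matrix $U$ of eigenvectors, shows $\|UU^*-I\|,\|U^*U-I\|\le c\ep$ via the uniform exponential decay \eqref{eq_uniform_localization}, and derives simplicity of the spectrum from strict monotonicity of $x\mapsto\lambda(x)$ (Corollary \ref{cor_lambda}), not from ``$\ep$ small relative to local separation'' --- the $V_{\bn}$ are dense, so there is no positive local separation.
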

Note that the strict monotonicity of $f$ on $(-1/2,1/2)$ and rational independence of the components of $\omega$ (which follows from the Diophantine condition) imply that the potential in $H(x_0)$ satisfies the algebraic non-resonance condition at all lattice points for all $x_0\in \R\setminus(\Z+1/2+\Z^d\cdot\omega)$. In other words, $f(x_0+\bm\cdot\omega)\neq f(x_0+\bn\cdot\omega)$ for $\bm\neq \bn$.

For the remaining results summarized below, it is more convenient to postpone the exact statements until further sections.
\begin{enumerate}
	\item The main result, from which Theorem \ref{th_main_meromorphic} follows, is Theorem \ref{main} whose proof is completed in Section 5. The conditions on $f$, in both local and global forms, are described in Section 5.1. These conditions, referred to as $C_{\reg}$-regularity, can be described entirely in terms of the first derivatives of $f$.
	\item Several generalizations are discussed in Section 6. In particular, the hopping terms $\varphi_{\bn}$ can be allowed to depend on $x$ in a quasiperiodic way, and infinite range hopping is allowed, with some natural assumptions that higher range hopping terms have larger powers of $\ep$ in front of them. A somewhat technical case is also considered where $f$, while still strictly monotone, is allowed to have intervals on which the lower bound on the derivative is, essentially, $\ep$-dependent. This case serves as a preparation for the next section. A version of the construction from Section 6 is also used in our subsequent paper \cite{KPSK}.
	\item In the last section, we consider a model example of $f$ which is not necessarily strictly monotone on $(-1/2,1/2)$ and is allowed to have a flat segment. As a consequence, the algebraic non-resonance condition is violated for this operator. We describe a scheme that allows to transform such operators to those satisfying the assumptions of a general theorem from Section 6. This allows us to obtain Anderson localization for these operators.
\end{enumerate}

\section{Graphical and symbolic representations of the perturbation series}
In this section, we will discuss in detail the construction of the perturbation series. For simplicity of notation, we will always assume that the series is started at $\bn_0=0$. The general case can be easily considered by translation (or, in the case of quasiperiodic operators, by changing the phase $x_0$).
\subsection{Representation in terms of paths on a graph}
There are multiple ways of representing terms in the series \eqref{def_rs_series}. For our purposes, the most convenient way would be to associate them with paths on a certain weighted graph $\Gamma$. {\it The vertices of $\Gamma$ are defined as follows}. First, take a copy of $\mathbb Z^d$ and call it the sheet of height $0$ (``base sheet''). For each vertex $\bk_0$ of $\mathbb Z^d\setminus\{\bze\}$, we add a new separate copy of $\mathbb Z^d\setminus\{\bze\}$, with our vertex $\bk_0$ placed as the origin (so that $\bk_0$ together with a copy of $\mathbb Z^d\setminus\{\bze\}$ form a copy of  $\mathbb Z^d$). Each of these copies of $\mathbb Z^d\setminus\{\bze\}$ will be called sheets of height $1$. Now, add a separate copy of $\mathbb Z^d\setminus\{\bze\}$ for each vertex $\bk_1$ on each sheet of height $1$, and call these new copies sheets of height $2$. The result of this process, repeated indefinitely, will be the set of vertices of the graph $\Gamma$. One can also enumerate the vertices of the graph in a more direct way: each non-zero vertex of $\Gamma$ on a sheet of height $s$ can be associated with a sequence $\bk_0,\bk_1,\ldots,\bk_s$, where $\bk_0,\bk_1,\ldots,\bk_s\in \mathbb Z^d\setminus\{\bze\}$. If $v$ is a vertex of $\Gamma$ that is the point $\bk$ from one of the copies of $\Z^d$ or $\Z^d\setminus\{\bze\}$, we will call $\bk$ the {\it coordinate} of $v$. In other words, a vertex $v$ represented by a sequence $\bk_0,\bk_1,\ldots,\bk_s$ has coordinate $\bk_s$. We will also consider the origin (which only exists on the base sheet) to be the only vertex with coordinate $\bze$ and denote it by the same symbol $\bze$ as the origin in $\Z^d$. 

We will say that the vertex associated with a sequence $\bk_0,\bk_1,\ldots,\bk_{s-1},\bk_s$ is {\it directly above} the vertex associated to $\bk_0,\bk_1,\ldots,\bk_{s-1}$. We will also say that the vertex associated with $\bk_0,\bk_1,\ldots,\bk_{s},\ldots,\bk_t$ is above the vertex associated with $\bk_0,\bk_1,\ldots,\bk_{s}$. In the remaining text, we will avoid using the sequence notation for vertices, but we will often use the terms ``above'' or ``directly above''. We will also use the words ``the sheet of $\Gamma$ directly above the vertex $v$'' in the obvious interpretation. Sometimes it is also convenient to use words ``below'' or ``directly below'', whose meaning is opposite to ``above'' and ``directly above''. Note that, by construction, none of the vertices of $\Gamma$ are above or below $\bze$.

{\it The edges of the graph $\Gamma$ are defined as follows}. In all cases, $\bn$ and $\bn'$ are coordinates of two vertices $v,v'$. The graph $\Gamma$ is oriented; however, with each edge it will also contain the edge in the opposite direction (which may have a different weight --- see below). If $v$ and $v'$ are on the same sheet, there is an edge between $v$ and $v'$ if and only if $\Phi_{\bn'\bn}\neq 0$. That includes the origin on the base sheet (recall that other sheets do not have an origin on them). In addition, if $v'$ is located on the sheet directly above $v$, then there is an edge from $v$ to $v'$ if and only if $\Phi_{\bn'\bze}\neq 0$. In a way, $v$ plays the role of the origin in the sheet that is directly above $v$.

{\it To each edge of the graph $\Gamma$, we will associate a weight:}
\begin{itemize}
	\item If $v$ and $v'$ are on the same sheet, then the edge from $v$ to $v'$ will have weight $\frac{\Phi_{\bn'\bn}}{V_{\bze}-V_{\bn'}}$ if $\bn'\neq \bze$, and weight $\Phi_{\bn\bze}$ if $\bn'=\bze$; the latter can happen only on the base sheet.
\item If $v'$ is directly above $v$, then the ``ascending'' edge from $v$ to $v'$ has weight $\frac{\Phi_{\bn'\bze}}{V_{\bze}-V_{\bn'}}$, and the ``descending'' edge from $v'$ to $v$ has weight $-\frac{\Phi_{\bze\bn'}}{V_{\bze}-V_{\bn}}$.
\end{itemize}
Just like on any oriented graph, one can define a path on $\Gamma$ as a sequence of (not necessarily distinct) edges $(e_1,\ldots,e_p)$, where the edge $e_{j}$ ends at the starting vertex of $e_{j+1}$. We will say that this path starts from the starting vertex of $e_1$ and ends at the ending vertex of $e_p$. We will say that a path visits a vertex $v$ if one of the edges $e_1,\ldots,e_p$ starts or ends with $v$. A path may visit the same vertex multiple times.

We will consider two types of paths on $\Gamma$. An {\it eigenvalue path} is a path that starts and ends at the origin and never visits the origin in between. An {\it eigenvector path} starts at the origin, ends at some non-zero vertex with coordinate $\bn$ on the base sheet, and does not visit the origin in between. It will also be convenient to consider eigenvalue paths that do not start from the origin. By definition, a {\it non-base $($eigenvalue$)$ path} is a path on $\Gamma$ that starts and ends from the same vertex $v$ and only visits vertices above $v$ in between. An eigenvalue path would be a non-base path that starts from the origin. Since the graph $\Gamma$ above any point looks identical, one can translate any eigenvalue path into any vertex of $\Gamma$ and obtain a non-base path. We will not consider non-base eigenvector paths. 

Since we will mostly consider eigenvalue/eigenvector paths, {\it any claim about paths, by default, is applicable only to eigenvalue/eigenvector paths}. We will always specify if we are talking specifically about non-base paths, eigenvalue paths, or eigenvector paths.

In all cases, $|\CP|$ will denote the length of $\CP$ (in other words, the number of edges of $\CP$). By $\cont(\CP)$, we denote the product of weights of all edges of $\CP$. If a path travels along the same edge multiple times, each of them gives a separate contribution.

If $\CQ$ is an eigenvalue path and $\CQ'$ is a non-base path obtained from $\CQ$ by translating it into a vertex with coordinate $\bk$, then we have
\bee
\label{eq_nonbase_weight}
\cont(\CQ')=-(V_{\bze}-V_{\bk})^{-1}\cont(\CQ),
\ene
since the weight of the last descending edge in $\CQ'$ has an extra factor $-(V_{\bze}-V_{\bk})^{-1}$.
\begin{rem}
\label{rem_zero_weights} In the above construction, $\Gamma_j$ has an edge between two points with coordinates $\bm$, $\bn$ on the same sheet if and only if $\Phi_{\bn\bm}\neq 0$. In some constructions, it is convenient to assume that has edges between {\it any} two points on the same sheet, with weights defined in the same way as above. At this stage, it is just a formality since newly added edges will have zero weights and no contribution to any calculations. Later, when one considers $x$-dependent hopping terms, it will be convenient that the $x$-dependence is only present in the edge weights but not in the graph $\Gamma$.
\end{rem}
The following is a formulation of the Rayleigh--Schr\"odinger perturbation theory. See, for example, \cite{Arnold} for a similar result in a different notation and slightly different setting.
\bet
\label{th_lak} 
Define $V$ and $\Phi$ as in \eqref{eq_v_def}, \eqref{eq_phi_def}, and assume that the algebraic non-resonance condition $\eqref{eq_v_nonresonant}$ is satisfied at $\bn_0=\bze$. Define $\Gamma$ and eigenvalue/eigenvector paths on $\Gamma$ as above. Then the infinite system of equations
\bee
\label{eq_rs_series}
(V+\ep \Phi)(\psi_0+\ep\psi_1+...)=(\la_0+\ep\la_1+...)(\psi_0+\ep\psi_1+...),
\ene
treated formally by equating left and right hand side at each power of $\varepsilon$, with 
\bee
\label{eq_rs_conditions}
\lambda_s\in \R,\quad \psi_s\in \ell^2(\Z^d), \quad \lambda_0=V_{\bze},\quad (\psi_s)_{\bze}=0,\,\,s>0;\quad \psi_0=e_{\bze},
\ene
has a unique solution given by
\bee
\label{eq_lak}
\la_s=\sum_{\CP\colon |\CP|=s} \cont(\CP),
\ene
where the sum is considered over all eigenvalue paths $\CP$ on $\Gamma$ $($with $|\CP|=s)$, and
\bee
\label{eq_psik}
(\psi_s)_{\bk}=\sum_{\CP\colon|\CP|=s} \cont(\CP),\,\, \bk\neq 0,\,\, s>0,
\ene
where the sum is considered over all eigenvector paths $\CP$  $($again, with $|\CP|=s)$ between vertices with coordinates $\bze$ and $\bk$ on the base sheet of $\Gamma$.
\ent 
\begin{proof}
The first equation obtained from \eqref{eq_rs_series} by considering terms with $\varepsilon^0$, is already contained in \eqref{eq_rs_conditions}. The next equation (corresponding to $\varepsilon^1$) leads to
\bee
\label{eq_rs_first}
V\psi_1+\Phi e_{\bze}=V_{\bze}\psi_1+\lambda_1\psi_0.
\ene
We will solve this equation by projecting the left and right hand sides onto $\mathrm{span}\{e_{\bze}\}$ and $\mathrm{span}\{e_{\bze}\}^{\perp}$. It will be convenient to use the ``partial inverse'' to the non-invertible operator $V_{\bze}I-V$:
$$
(V_{\bze}-V)^{-1}:=(V_{\bze}I-V)^{-1}(1-\langle e_{\bze},\cdot\rangle e_{\bze}),
$$
that is, we always assume that  $(V_{\bze}-V)^{-1}$ is extended by zero into $\ker (V_{\bze}I-V)$. The operator $(V_{\bze}-V)^{-1}$, as well as $V$ itself, is a possibly unbounded multiplication operator. However, we will only apply $(V_{\bze}-V)^{-1}$ to vectors with finite support, which are always in its domain. With these conventions, \eqref{eq_rs_first} reduces to
$$
\lambda_1=0,\quad  \psi_1=(V_{\bze}-V)^{-1}\Phi e_{\bze}.
$$
The comparison of terms of \eqref{eq_rs_series} at $\varepsilon^2$ yields
\bee
\label{eq_rs_eps2}
V\psi_2 +\Phi\psi_1=V_{\bze}\psi_2+\lambda_1\psi_1+\lambda_2 e_{\bze}.
\ene
Projecting \eqref{eq_rs_eps2} onto $\mathrm{span}\{e_{\bze}\}$ and $\mathrm{span}\{e_{\bze}\}^{\perp}$ together with the orthogonality condition $\psi_j\perp e_{\bze}=0$ for $j>0$ lead to
$$
\lambda_2=\langle \Phi \psi_1,e_{\bze}\rangle,
$$
$$
\psi_2=(V_{\bze}-V)^{-1}\Phi\psi_1-\lambda_1 (V_{\bze}-V)^{-1} \psi_1=(V_{\bze}-V)^{-1}\Phi\psi_1
$$
In general, the equation at $\varepsilon^s$
$$
V\psi_s+\Phi \psi_{s-1}=V_{\bze}\psi_s+\lambda_1 \psi_{s-1}+\lambda_2\psi_{s-2}+\ldots+\lambda_{s-1}\psi_1+\lambda_{s}e_{\bze},
$$
after projecting it onto the same subspaces, becomes a system of two equations
\bee
\label{eq_rs_psik}
\psi_{s}=(V_{\bze}-V)^{-1}(\Phi\psi_{s-1}-\lambda_2\psi_{s-2}-\ldots-\lambda_{s-1}\psi_1)
\ene
and
\begin{multline}
\label{eq_rs_lak}	
\lambda_{s}=\langle \Phi \psi_{s-1},e_{\bze}\rangle =\langle \Phi (V_{\bze}-V)^{-1}\Phi\psi_{s-2},e_{\bze}\rangle - \lambda_2 \langle \Phi (V_{\bze}-V)^{-1}\psi_{s-3},e_{\bze}\rangle\\-\lambda_3 \langle \Phi (V_{\bze}-V)^{-1}\psi_{s-4},e_{\bze}\rangle-\ldots
-\lambda_{s-2}\langle \Phi (V_{\bze}-V)^{-1}\psi_1,e_{\bze}\rangle.
\end{multline}
One can easily check that, say, for $s=1$ and $s=2$ this gives the same expressions as \eqref{eq_lak}, \eqref{eq_psik}. It remains to show that \eqref{eq_lak}, \eqref{eq_psik} satisfy recurrent relations similar to \eqref{eq_rs_lak}, \eqref{eq_rs_psik}. 

Consider the first equality in \eqref{eq_rs_lak}. In terms of paths, it corresponds to the following fact: each eigenvalue path ends at the origin, and the origin is only connected to the base sheet. Therefore, each eigenvalue path of length $s$ can be obtained from an eigenvector path of length $s-1$ that ends at a vertex with coordinate $\bk$ on the base sheet by adding an extra edge from $\bk$ to $\bze$. By the induction assumption \eqref{eq_psik} for $s-1$, the total contribution of such eigenvector paths is $(\psi_{s-1})_{\bk}$. The extra edge from $\bk$ to $\bze$ will produce a factor $\Phi_{\bze\bk}$, which results in
$$
\lambda_s=\sum_{\bk\in \Z^d\setminus\{\bze\}}\Phi_{\bze\bk}(\psi_{s-1})_{\bk},
$$
thus arriving to the first equality in \eqref{eq_rs_lak}.

To establish \eqref{eq_rs_psik}, suppose that $\CP$ is an eigenvector path that ends at $\bk$ on the base sheet. Then there are two possibilities: either it arrives to $\bk$ from some other vertex $\bm$ on the base sheet, or it descends into $\bk$ from the sheet directly above $\bk$.

In the first case, $\CP$ is obtained from some other eigenvector path $\CP'$ from $\bze$ to $\bm$ of length $s-1$, by adding an edge from $\bm$ to $\bk$ on that sheet. By the induction assumption \eqref{eq_psik} with $s-1$, the total contribution of such paths from $\bze$ to $\bm$ is $(\psi_{s-1})_{\bm}$. The weight of the extra edge from $\bm$ to $\bk$ is $\Phi_{\bk\bm}(V_{\bze}-V_{\bk})^{-1}$. After summing over all possible choices of $\bm$, we arrive to the total contribution of the paths from the first case being
$$
\sum_{\bm\in \Z^d\setminus\{\bze\}}(V_{\bze}-V_{\bk})^{-1}
\Phi_{\bk\bm}(\psi_{s-1})_{\bm},
$$
which is equal to the $\bk$th component of the first term in \eqref{eq_rs_psik}.

In the second case note that, in order to be able to descend into the vertex $\bk$ on the base sheet, the path $\CP$ should have, sometime earlier, ascended from the same vertex to the sheet above it. Consider the last time when $\CP$ ascended from $\bk$ on the base sheet and break $\CP$ into two parts: the part $\CP'$ right before that event, and the part $\CQ'$ after it.
The part $\CQ'$ is a non-base eigenvalue path that starts and ends at $\bk$ and spends the rest of time in the sheets above $\bk$. Its contribution can be calculated using \eqref{eq_nonbase_weight}, thus obtaining
$$
\cont(\CP)=\cont(\CQ')\cont(\CP')=-(V_{\bze}-V_{\bk})^{-1}\cont(\CQ)\cont(\CP'),
$$
where $\CQ$ is the eigenvalue path obtained by translating $\CQ'$ into the origin. By considering all possible choices for $\CQ$, we arrive to the total contribution of paths in this case being
$$
-\sum_{\CQ,\CP'}(V_{\bze}-V_{\bk})^{-1}\cont(\CQ)\cont(\CP'),
$$
where the summation is considered over all eigenvalue paths $\CQ$ with $|\CQ|=j$ and all eigenvector paths $\CP'$ from $\bze$ to $\bk$ of length $s-j$, with $j=2,3,\ldots,s-1$. For a fixed $j$, one can factorize the summation and apply the induction assumptions \eqref{eq_lak}, \eqref{eq_psik} at stages $j$ and $s-j$, respectively, and obtain that, for each $j$, the contribution of paths $\CP=\CQ'\CP'$ with $|\CQ'|=j$ is equal to
$$
-(V_{\bze}-V_{\bk})^{-1}\left(\sum_{\CQ:|\CQ|=j}\cont(\CQ)\right)\left(\sum_{\CP':|\CP'|=s-j}\cont(\CP)\right)=-(V_{\bze}-V_{\bk})^{-1}\lambda_j(\psi_{s-j})_{\bk},
$$
which can now be identified with a term of \eqref{eq_rs_psik}.
\end{proof}
\subsection{Symbolic and graphical representation of paths on $\Gamma$} 
We will use the following notation in order to describe paths on $\Gamma$. Let $\CP$ be an eigenvalue path which only travels along the base sheet of $\Gamma$ (later, such paths will be called {\it loops}). Suppose that it starts at the origin, then visits the vertices of the base sheet with coordinates $\bn_1,\bn_2,\ldots,\bn_k\neq \bze$ (in this order, and each new visit of the same vertex is accounted for separately), and then returns to the origin. We will use the following notation for this path:
$$
\CP=(\bn_1\bn_2\ldots\bn_k)
$$
We will also denote eigenvector paths in the same way, but will include the last point in the notation:
$$
\CP=(\bn_1\bn_2\ldots\bn_k\bm),
$$
where $\bm\notin \{\bn_k,\bze\}$ is the ending point of the path $\CP$. Whether the string denotes an eigenvalue or an eigenvector path, should be specified in the context, otherwise the notation is ambiguous: for example, the string $(1234321)$ denotes both an eigenvalue path and an eigenvector path that starts at $0$ and ends at $1$ (in this example, we assumed that all coordinates belong to $\mathbb Z$, that is, $d=1$, and are not using boldface notation for them).

Whenever a path ascends to a sheet above some vertex, we will write an opening parenthesis, and then continue with coordinates of vertices that $\CP$ visits on the upper sheet. For example, the eigenvalue path
\bee
\label{eq_string_example}
\CP=(12345(12321)5434321)
\ene
travels from the origin to the vertex with coordinate 5 on the base sheet (here we again assume that all coordinates belong to $\mathbb Z$, that is, $d=1$), then uses an ascending edge to ascend the sheet of height 1 directly above the vertex 5, then travels to 3 and back to 1 on that sheet, and then descends to the base sheet and proceeds along the remaining segment $434321$. Whenever the path uses a descending edge to descend to a lower sheet, we use the closing parenthesis. One can have multiple levels of parentheses, depending on how high the path climbs. Given the string representing $\CP$, one can calculate $\cont(\CP)$ using the following rules, which will associate an edge to each pair of consecutive coordinates in that string. Suppose $v$ and $v'$ are vertices with coordinates $\bn,\bn'$ respectively.
\begin{itemize}
	\item $\bn \bn'$ represents the edge from $v_1$ to $v_2$ assuming that they are on the same sheet.
	\item $\bn(\bn'$ represents the ascending edge from $v_1$ to $v_2$ assuming that $v_2$ is on a sheet directly above $v_1$.
	\item $\bn)\bn'$ represents the descending edge from $v_1$ to $v_2$ assuming that $v_1$ is on a sheet directly above $v_2$.
	\item $(\bn$ in the beginning of the string represents the edge from $\bze$ to $\bn$.
	\item if $\CP$ is an eigenvalue path, then $\bn)$ in the end of the string represents the edge from $v$ to $\bze$. In case of an eigenvector path, it does not represent anything (one can consider it as an edge of weight $1$ and length zero, meaning that it does not contribute to $|\CP|$ and $\cont(\CP)$).
\end{itemize}
To calculate $\cont(\CP)$, one can multiply the weights of the edges for each pair of consecutive lattice points in the string, using the rules from the above paragraph and weights from Section 3.1.

In order to better understand the constructions, we will also use graphical representations of the paths on $\Gamma$. A simple path $(1234321)$ will be represented graphically as the following diagram:
$$
\begin{tikzpicture}
	\begin{pgfonlayer}{nodelayer}
		\node [style=Vertex] (0) at (0, -3) {$1$};
		\node [style=Vertex] (1) at (-3, 0) {$1$};
		\node [style=Vertex] (2) at (0, 3) {$3$};
		\node [style=Vertex] (3) at (3, 0) {$3$};
		\node [style=Vertex] (4) at (2.25, -2.25) {$2$};
		\node [style=Vertex] (5) at (-2.25, -2.25) {$0$};
		\node [style=Vertex] (6) at (2.25, 2.25) {$4$};
		\node [style=Vertex] (7) at (-2.25, 2.25) {$2$};
	\end{pgfonlayer}
	\begin{pgfonlayer}{edgelayer}
		\draw [style=Arrow edge] (5) to (0);
		\draw [style=Arrow edge] (0) to (4);
		\draw [style=Arrow edge] (4) to (3);
		\draw [style=Arrow edge] (3) to (6);
		\draw [style=Arrow edge] (6) to (2);
		\draw [style=Arrow edge] (2) to (7);
		\draw [style=Arrow edge] (7) to (1);
		\draw [style=Arrow edge] (1) to (5);
	\end{pgfonlayer}
\end{tikzpicture}
$$
A path visiting higher sheets of $\Gamma$, such as $(12345(12321)5(121)54321)$, corresponds to the  diagram
$$
\begin{tikzpicture}
	\begin{pgfonlayer}{nodelayer}
		\node [style=Vertex] (0) at (0, -4) {$1$};
		\node [style=Vertex] (1) at (0, 4) {$4$};
		\node [style=Vertex] (2) at (4, 1.25) {$4$};
		\node [style=Vertex] (3) at (4, -1.5) {$3$};
		\node [style=Vertex] (4) at (2.5, -3.5) {$2$};
		\node [style=Vertex] (5) at (2.75, 3.5) {$5$};
		\node [style=Vertex] (6) at (-4, 1.25) {$2$};
		\node [style=Vertex] (7) at (-4, -1.25) {$1$};
		\node [style=Vertex] (8) at (-2.5, -3.5) {$0$};
		\node [style=Vertex] (9) at (-2.5, 3.5) {$3$};
		\node [style=base vertex] (10) at (3, 5.75) {$5$};
		\node [style=Vertex] (11) at (5, 5.75) {$1$};
		\node [style=Vertex] (12) at (5, 7.75) {$2$};
		\node [style=Vertex] (13) at (3, 7.75) {$1$};
		\node [style=base vertex] (14) at (5.75, 3.25) {$5$};
		\node [style=Vertex] (15) at (7, 1.25) {$1$};
		\node [style=Vertex] (16) at (9.5, 1.25) {$2$};
		\node [style=Vertex] (17) at (10.5, 3.25) {$3$};
		\node [style=Vertex] (18) at (9.25, 5.25) {$2$};
		\node [style=Vertex] (19) at (6.75, 5.25) {$1$};
	\end{pgfonlayer}
	\begin{pgfonlayer}{edgelayer}
		\draw [style=Arrow edge] (8) to (0);
		\draw [style=Arrow edge] (0) to (4);
		\draw [style=Arrow edge] (4) to (3);
		\draw [style=Arrow edge] (3) to (2);
		\draw [style=Arrow edge] (2) to (5);
		\draw [style=Arrow edge] (5) to (1);
		\draw [style=Arrow edge] (1) to (9);
		\draw [style=Arrow edge] (9) to (6);
		\draw [style=Arrow edge] (6) to (7);
		\draw [style=Arrow edge] (7) to (8);
		\draw [style=Arrow edge] (10) to (11);
		\draw [style=Arrow edge] (11) to (12);
		\draw [style=Arrow edge] (12) to (13);
		\draw [style=Arrow edge] (13) to (10);
		\draw (5) to (10);
		\draw (5) to (14);
		\draw [style=Arrow edge] (14) to (15);
		\draw [style=Arrow edge] (15) to (16);
		\draw [style=Arrow edge] (16) to (17);
		\draw [style=Arrow edge] (17) to (18);
		\draw [style=Arrow edge] (18) to (19);
		\draw [style=Arrow edge] (19) to (14);
	\end{pgfonlayer}
\end{tikzpicture}
$$
In each diagram, an oriented edge corresponds to an edge of the original path. For the convenience of drawing, each time a path enters a new sheet of $\Gamma$ directly above a vertex with coordinate $\bn$, we create a copy of that vertex on the picture and draw the corresponding part of the path starting from that copy. Such "extra" copy is represented by a diamond-shaped node. The oriented edges connected to that copy are the corresponding ascending and descending edges of $\Gamma$. The non-oriented edge connecting $\bn$ and the diamond $\bn$ does not correspond to any edge of $\Gamma$ and only represents the fact that these nodes correspond to the same vertex of the graph.

In order to recover a path from its diagram, one can start from the node representing the origin (there is only one such node) and then follow the arrows. In the event a node with coordinate $\bn$ is connected with a diamond-shaped node, upon arrival into $\bn$ one must first go to the diamond-shaped node and follow the arrows from that node; they will eventually return back to the diamond-shaped node. Afterwards, one can continue following the arrow coming out of circle-shaped $\bn$. In case $\bn$ is connected to multiple diamond-shaped nodes, as in the last example, one needs to treat them all in counter-clockwise order before proceeding out of $\bn$. In particular, this implies that the order in which different attached loops appear on the diagram is important, since the opposite attachment would correspond to a different string $(12345(121)5(12321)54321)$.

The contribution $\cont(\CP)$ can be recovered from the picture by using the following rules, representing the fact that each oriented edge of $\Gamma$ corresponds to an oriented edge on a picture and gives a multiplicative contribution. In the below notation, we assume $\bm,\bn\neq \bze$. We also include non-oriented edges in this table for completeness, emphasizing the fact that they do not correspond to edges of $\Gamma$ and give no contribution.
$$
\begin{tikzpicture}
	\begin{pgfonlayer}{nodelayer}
		\node [style=Vertex] (0) at (-7, 1) {$\bm$};
		\node [style=Vertex] (1) at (-5, 1) {};
		\node [style=Vertex] (2) at (-5, 1) {$\bn$};
		\node [style=none] (3) at (-6, -0.5) {};
		\node [style=none] (4) at (-6, -0.5) {$\frac{\Phi_{\bn\bm}}{V_{\bze}-V_{\bn}}$};
		\node [style=Vertex] (5) at (-2, 1) {$\bm$};
		\node [style=Vertex] (6) at (0, 1) {};
		\node [style=Vertex] (7) at (0, 1) {$\bze$};
		\node [style=none] (8) at (-1, -0.5) {};
		\node [style=none] (9) at (-1, -0.5) {$\Phi_{\bze\bm}$};
		\node [style=Vertex] (10) at (3, 1) {$\bze$};
		\node [style=Vertex] (11) at (5, 1) {$\bn$};
		\node [style=none] (12) at (4, -0.5) {};
		\node [style=none] (13) at (4, -0.5) {$\frac{\Phi_{\bn\bze}}{V_{\bze}-V_{\bn}}$};
		\node [style=Vertex] (14) at (-7, -3) {$\bm$};
		\node [style=base vertex] (15) at (-5, -3) {$\bn$};
		\node [style=none] (16) at (-6, -4.5) {$-\frac{\Phi_{\bze\bm}}{V_{\bze}-V_{\bn}}$};
		\node [style=Vertex] (17) at (-2, -3) {};
		\node [style=Vertex] (18) at (0, -3) {};
		\node [style=base vertex] (19) at (-2, -3) {$\bm$};
		\node [style=Vertex] (20) at (0, -3) {$\bn$};
		\node [style=none] (21) at (-1, -4.5) {};
		\node [style=none] (22) at (-1, -4.5) {$\frac{\Phi_{\bn\bze}}{V_{\bze}-V_{\bn}}$};
		\node [style=Vertex] (23) at (3, -3) {$\bn$};
		\node [style=base vertex] (24) at (5, -3) {$\bn$};
		\node [style=none] (25) at (4, -4.5) {};
		\node [style=none] (26) at (4, -4.5) {$1$};
	\end{pgfonlayer}
	\begin{pgfonlayer}{edgelayer}
		\draw [style=Arrow edge] (0) to (2);
		\draw [style=Arrow edge] (5) to (7);
		\draw [style=Arrow edge] (10) to (11);
		\draw [style=Arrow edge] (14) to (15);
		\draw [style=Arrow edge] (19) to (20);
		\draw (23) to (24);
	\end{pgfonlayer}
\end{tikzpicture}
$$

\subsection{Attachment of paths and loops} Recall that any eigenvalue path can be moved into any point of $\Gamma$, where it becomes a non-base path. Suppose that $\CP$ is a path and $v$ is a vertex on $\CP$. Let $\CQ$ be an eigenvalue path. Then, one can construct a new path in the following way: take the part of $\CP$ until it reaches $v$, then insert a copy of $\CQ$ moved into $v$ (which becomes a non-base path that starts and ends at $v$) and then, once the copy of $\CQ$ returns to $v$, continue following the remaining part of $\CP$. We will say that the new path is obtained by attaching $\CQ$ to $\CP$ at the vertex $v$. In the symbolic notation, suppose that $\bn$ is the coordinate of $v$. To attach $\CQ$, we replace $\bn$ by $\bn\CQ\bn$ at the corresponding position of $\bn$ in $\CP$. Note that $\CP$ can visit $v$ multiple times. In that case, we can attach $\CQ$ at different positions of $v$ on $\CP$ and, in general, obtain different paths.

By an {\it eigenvalue/eigenvector loop}, we will denote an eigenvalue/eigenvector path that does not leave the base sheet of $\Gamma$. It is easy to see that any path can be obtained from a loop of the same type by finitely many attachments of eigenvalue loops. For example, to obtain the eigenvector path
$$
(12345(123(1234321)321)543),
$$
graphically represented as
$$
\begin{tikzpicture}
	\begin{pgfonlayer}{nodelayer}
		\node [style=Vertex] (0) at (-3, 0) {$0$};
		\node [style=Vertex] (1) at (3, 0) {$4$};
		\node [style=Vertex] (2) at (0, 3) {$4$};
		\node [style=Vertex] (3) at (0, -3) {$2$};
		\node [style=Vertex] (4) at (-2.25, 2.25) {$3$};
		\node [style=Vertex] (5) at (2.25, 2.25) {$5$};
		\node [style=Vertex] (6) at (2.25, -2.25) {$3$};
		\node [style=Vertex] (7) at (-2.25, -2.25) {$1$};
		\node [style=base vertex] (8) at (4, 4) {$5$};
		\node [style=Vertex] (9) at (6.75, 5.75) {$3$};
		\node [style=Vertex] (10) at (5.5, 6.5) {$2$};
		\node [style=Vertex] (12) at (4, 5.75) {$1$};
		\node [style=Vertex] (13) at (5.5, 3.5) {$1$};
		\node [style=Vertex] (14) at (6.75, 4.25) {$2$};
		\node [style=base vertex] (15) at (8.5, 5.5) {$3$};
		\node [style=Vertex] (16) at (8.5, 8.5) {$2$};
		\node [style=Vertex] (17) at (11.5, 8.5) {$4$};
		\node [style=Vertex] (18) at (11.5, 5.75) {$2$};
		\node [style=Vertex] (19) at (8, 7) {$1$};
		\node [style=Vertex] (20) at (10, 9) {$3$};
		\node [style=Vertex] (21) at (12, 7) {$3$};
		\node [style=Vertex] (22) at (10, 5) {$1$};
	\end{pgfonlayer}
	\begin{pgfonlayer}{edgelayer}
		\draw (5) to (8);
		\draw [style=Arrow edge] (0) to (7);
		\draw [style=Arrow edge] (7) to (3);
		\draw [style=Arrow edge] (3) to (6);
		\draw [style=Arrow edge] (6) to (1);
		\draw [style=Arrow edge] (1) to (5);
		\draw [style=Arrow edge] (5) to (2);
		\draw [style=Arrow edge] (2) to (4);
		\draw [style=Arrow edge] (8) to (13);
		\draw [style=Arrow edge] (13) to (14);
		\draw [style=Arrow edge] (14) to (9);
		\draw [style=Arrow edge] (9) to (10);
		\draw [style=Arrow edge] (10) to (12);
		\draw [style=Arrow edge] (12) to (8);
		\draw (9) to (15);
		\draw [style=Arrow edge] (15) to (22);
		\draw [style=Arrow edge] (22) to (18);
		\draw [style=Arrow edge] (18) to (21);
		\draw [style=Arrow edge] (21) to (17);
		\draw [style=Arrow edge] (17) to (20);
		\draw [style=Arrow edge] (20) to (16);
		\draw [style=Arrow edge] (16) to (19);
		\draw [style=Arrow edge] (19) to (15);
	\end{pgfonlayer}
\end{tikzpicture}
$$
we start from the base (eigenvector) loop $(1234543)$, attach $(12321)$ at the vertex with coordinate $5$, thus obtaining $(12345(12321)543)$, and then attach another eigenvalue loop $(1234321)$ to the vertex with coordinate $3$ in the middle. The base loop and all attached loops that were used in building $\CP$ will be called {\it loops on $\CP$}. Note that the difference between eigenvalue and eigenvector paths is only in the type of the base loop. In both cases, all attached loops must be of the eigenvalue type.

From \eqref{eq_nonbase_weight}, if $\CP'$ is obtained from $\CP$ by attaching an eigenvalue loop $\CL$ at $\bn$, then
$$
\cont(\CP')=\cont(\CP)\cont(\CL')=-(V_{\bze}-V_{\bn})^{-1}\cont(\CP)\cont(\CL),
$$
where $\CL'$ is the non-base loop obtained by moving $\CL$ into $\bn$.

\section{Small denominator expansion}
\subsection{Small denominators and resonances} Fix $x_0\in \R\setminus(\Z+1/2+\Z^d\cdot\omega)$ and consider the operator \eqref{eq_h_def}:
$$
(H(x_0)\psi)_{\bn}=\varepsilon\sum_{\bm\in \Z^d}\varphi_{\bn-\bm}\psi_{\bm}+f(x_0+\bn\cdot\omega)\psi_{\bn}.
$$
Section 2.1 provides a formal power series for eigenvalues and eigenvectors of the operator \eqref{eq_h_def}. Our goal is to establish convergence of this series, for some class of functions $f$. For general non-monotone $f$ and small $\varepsilon$, there are two main obstructions to this convergence: small denominators and resonances. A small denominator appears whenever $\|\bn\cdot\omega\|=\dist(\bn\cdot\omega,\Z)\ll \varepsilon$. If we assume some regularity of $f$, this means that $f(x_0)-f(x_0+\bn\cdot\omega)$ is small. A resonance is a situation when $f(x_0)-f(x_0+\bn\cdot\omega)$ is small for some other reason: for example, if $f$ has multiple intervals of monotonicity, it is possible for the values of $f$ at $x_0$ and $x_0+\bn\cdot\omega$ to be close without $\|\bn\cdot\omega\|$ being small. Unlike small denominators, whether or not translation by $\bn$ creates a resonance depends strongly on $x_0$. {\it Except for the last section, we will assume that there are no resonances} (for example, by considering $f$ that are monotone or locally monotone). Throughout the text, lattice points $\bn$ such that $\|\bn \cdot\omega\|=\dist(\bn\cdot\omega,\Z)$ is small, will also be called small denominators.

\begin{rem}
\label{rem_summary1}
Note that, if $\omega$ satisfies some Diophantine properties, then it takes a relatively large number of steps along the lattice to reach a small denominator. If $\varepsilon$ is small, we can hope to compensate the contribution to $\cont(\CP)$ from the small denominator by the number of $\varepsilon$ factors that we gain at each step. Moreover, if we are traveling between two different small denominators, similar Diophantine arguments guarantee that the path $\CP$ has to make sufficiently many steps in between. Later in the text, these paths will be called safe. The contributions from safe paths form an absolutely convergent series for which no cancellations are required. However, there are also unsafe situations: suppose that a path visits the same small denominator many times, for example, by going back and forth between the small denominator $n$ and the point $n+1$ (similar to earlier examples, we are considering a quasiperiodic operator on $\ell^2(\Z)$ and not using boldface font to denote lattice points). Then, each of these small trips gains $\varepsilon^2$ in the numerator and a factor $f(x_0)-f(x_0+n\omega)$ in the denominator. If the latter factor is very small, the sum of these contributions diverges. Fortunately, in the quasiperiodic setting, the contribution of this path almost cancels with several other paths of the same length. Since these situations can happen on multiple levels simultaneously, {\it the goal of this section is to identify which paths are to be grouped together in order to take the biggest advantage of these cancellations}.
\end{rem}

%In this section, we describe a refined version of the expansion in Theorem \ref{th_lak}. We will classify the denominators $V_{\bn}$ by their ``smallness'', and will call a loop safe if it does not visit the same small denominator too often. To deal with non-safe loops, we will split them into smaller loops of new type, with each of the smaller loops being safe. This does not automatically solve the problem with small denominators, however, the only problem now is the extra factors that appear at the connection points of new loops. This ``localization'' of small denominators allows to develop the combinatorics of the cancellation procedure later in this section.

\subsection{Canonical marking and the equivalence relation} To begin with, we introduce the scale of ``the smallness'' of denominators. We will use the distance function on $\Z^d$ defined by the hopping matrix $\Phi$:
for $\bk,\bk'\in \Z^d$, let $\dist_{\varphi}(\bk,\bk')$ be the smallest number $l$ such that there exists a sequence $\bk=\bk_0,\bk_1,\ldots,\bk_l=\bk'$ with $\Phi_{\bk_{j+1}\bk_j}\neq 0$ for $0\le j\le l-1$; in other words, this is the shortest distance function on the base sheet of the graph $\Gamma$. We will say that two functions
\bee
\level\colon \Z^d\to \Z_+\cup\{+\infty\},\quad \safedist\colon \Z_+\to \Z_+,
\ene
form a {\it consistent denominator data} if the following is true:
\begin{itemize}
	\item[(c0)] $\level(\bze)=+\infty$, $\safedist(0)=0$. The function $\safedist$ is monotone non-decreasing in its argument.
	\item[(c1)] $\dist_{\varphi}(\bm,\bn)\ge \min\{\safedist(\level(\bm)),\safedist(\level(\bn))\}$, for $\bm\neq \bn$.
	\item[(c2)] Suppose, $0<\dist_{\varphi}(\bn,\bm)<\safedist(\level(\bm))$. Then $	\level(\bn-\bm)=\level(\bn)$.
\end{itemize}
Lattice points $\bm\neq \bze$ with $\level(\bm)>0$ will be called {\it small denominators}.
In order to simplify the notation, we will also extend the function $\safedist$ into $\Z^d$ by
$$
\safedist(\bn):=\safedist(\level(\bn));\quad \safedist(\bze):=+\infty.
$$
Later, we will give specific examples of consistent denominator data in the context of quasiperiodic operators. Throughout the rest of the section, most of the definitions will depend on the choice of a consistent denominator data, and we will assume that some such choice has been fixed.

%\begin{defn}
%Fix a consistent denominator data. An eigenvalue or eigenvector path on $\mathbb Z^d$ is called {\it safe}, if, between two consecutive visits of a point $(\bn_0\ldots,\bn_l)\in \Gamma$, it either does not visit the sheet containing that point, or makes at least $\safedist(\bn_l)$ steps on that sheet.
%\end{defn}
%\begin{rem}
%The first condition of the definition is reserved for the case when the path arrives to $(\bn_0\ldots,\bn_l)$, then immediately goes ascendings, spends some time in the above sheets, and then comes back to $(\bn_0\ldots,\bn_l)$. This would not be counted as two consecutive visits of that point.
%\end{rem}
\begin{defn}
\label{def_safe_loop}
Let $\CL$ be an eigenvalue/eigenvector loop. We say that $\CL$ is {\it safe}, if, between any two visits of a small denominator $\bm$, it makes at least $\safedist(\bm)$ steps.
\end{defn}
We will now describe an additional structure on the set of eigenvalue/eigenvector paths that will allow us to set up the cancellation procedure. Let $\CL$ be an eigenvalue/eigenvector loop. Suppose, $\bm$ is a small denominator visited by $\CL$ multiple times. Consider a segment of the string that defines $\CL$ between two consecutive visits of $\bm$ by $\CL$. For each such segment, we now choose either to mark it, in which case we put square parenthesis around that segment, or not to mark it.
For example, if $\CL=(12345432321)$ and $3$ is the only small denominator on $\CL$, then there are four possibilities of marking $\CL$:
$$
(12345432321),\quad(123[454]32321),\quad (1234543[2]321),\quad (123[454]3[2]321).
$$
The same entry of the string may belong to multiple marked segments. We will require the following: if two marked segments overlap, then one of them must be contained in another. As a consequence, the positions of the square parenthesis uniquely determines which segments are marked and which are not. Note that the marking is always applied to particular segments of $\CL$ rather than to corresponding vertices/edges of $\Gamma$. In particular, if a marked segment of $\CL$ contains a vertex $v$, it does not automatically imply that all consecutive visits of $v$ by $\CL$ will be marked.

Under the above assumptions, if $\CL$ is a loop with markings, one can consider a smaller loop obtained from $\CL$ by removing all marked segments from it, that is, replacing each marked segment of the form $\bm[\ldots]\bm$ by $\bm$ (always starting from the shortest segment). For example, in the above four cases the removal procedure would result in
$$
(12345432321),\quad(1232321),\quad (123454321),\quad (12321).
$$
Note that a copy of the small denominator also gets removed from the string each time, so that the loop remains well-defined. 

We now define {\it the canonical marking} of a loop $\CL$. The goal of it is the following: whenever $\CL$ has two visits of a small denominator $\bm$ in a short time, we mark the segment between these visits. By ``short'', we mean shorter than $\safedist(\bm)$. However, if $\CL$ visits another small denominator $\bm'$ of smaller level between visits of $\bm$, and the segment between visits of $\bm'$ is also short, then we do not want to include it in the calculation of time spent between visits of $\bm$. More formally, do the following procedure, starting from an eigenvalue/eigenvector loop $\CL$.
\begin{enumerate}
\item For each small denominator $\bm$ of level 1 and each segment between two consecutive visits of $\bm$ of length $<\safedist(1)$, mark that segment on $\CL$.
\item Denote by $\CL'$ the result of removing all marked segments on $\CL$.
\item For each small denominator $\bm'$ of level 2 on $\CL'$ and each segment between two consecutive visits of $\bm'$ of length $<\safedist(2)$ on $\CL'$, mark the corresponding segment on $\CL$.
\item Denote by $\CL''$ the result of removing all previously marked segments on $\CL$.
\item Repeat for levels 3,4,5,...
\end{enumerate}
In other words, we mark every ``short'' segment between two consecutive visits of each small denominator. The notion of length used in defining ``short'' should not count already marked segments.
\begin{rem}
Assumption (c1) implies that the above procedure will actually produce a loop with markings; in other words, that for any two overlapping marked segments one of them will be contained in the other.
\end{rem}

If $\CL$ is safe, then the canonical marking of $\CL$ coincides with $\CL$ and it does not have any marked segments. If one removes all marked segments from the canonical marking of $\CL$, the resulting (shorter) loop will be safe. We would like to draw the reader's attention to the following: suppose that $\CL$ is not safe, but the definition of ``safe'' only fails for one segment between two visits of a denominator, say, of level $1$. It is still possible that more than one segment is marked, because the removal of first segment may shorten segments between denominators of higher levels. Therefore, it is important that the algorithm proceeds from lower to higher levels and not the other way around (although it is possible to modify it appropriately).

We will now define the {\it canonical translation} of a loop. We first define it on the level of strings of symbols and will later check that it defines a path on $\Gamma$. Let $\CL$ be an eigenvalue/eigenvector loop, and suppose that it is canonically marked. 
\begin{enumerate}
\item For each vertex $v$ of $\CL$ with coordinate $\bn$, consider its position in the canonical marking of $\CL$. Suppose that $\bm[\ldots]\bm$ is the smallest marked segment that contains $v$. Then replace that entry $\bn$ by $\bn-\bm$. If $\bn$ is not on a marked segment, do nothing.
\item Replace all square parenthesis in the string by round parenthesis.
\end{enumerate}

From (c2), it follows that none of the denominators will change their level after translation. As a consequence, no point with non-zero coordinate will be translated into the origin, and therefore 
the above string will represent a path on $\Gamma$. We will denote this path by $T(\CL)$ and call it the {\it canonical translation} of $\CL$. Unless $\CL$ was safe, $T(\CL)$ will no longer be a loop, as it will involve visits to non-base sheets (encoded by the round parenthesis).

In other words, we are replacing each entry of the form $\bm[\ldots]\bm$ by $\bm(\ldots)\bm$, where everything between the parenthesis is translated by $-\bm$, unless it is inside some smaller marked segment. For example, suppose that $3$ and $6$ are small denominators, and the canonical marking of $\CL$ is $\CL=(123[456[5]654]321)$. We will use the same notation for $\CL$ and its canonical marking. Then
$$
T(\CL)=(123(123(-1)321)321).
$$
In this case, the entry $5$ in the middle is translated by $-6$, and the strings $456$ and $654$ are each translated by $-3$.

We have defined the canonical translation $T(\CL)$ of a loop $\CL$. Since any path on $\Gamma$ can be obtained from loops using the attachment procedure, we naturally define $T(\CP)$ by applying $T$ to each loop in $\CP$, and preserving the attachments at the same locations (based on their positions in the string of symbols).
\begin{rem}
Let $\CL$ be a loop and $T(\CL)$ be its canonical translation. As mentioned above, $T(\CL)$ is a path with multiple loops, but every loop of $T(\CL)$ is safe. Moreover, the base loop of $T(\CL)$ is the result of removing of all marked segments from the canonical marking of $\CL$.
\end{rem}
\begin{rem}
One can also define canonical marking and canonical translation directly for paths $\CP$. If $\CP$ visits a vertex $v$ on a sheet $S$ twice, we will say that the visits are consecutive if, between these visits, there are no other visits of $v$ and there are no visits of the sheet directly below $S$. In this case, in order to determine whether or not $\CP$ has made sufficiently many steps between these visits, we do not count previously marked segments and visits of sheets above $S$. Similarly, one needs to start from denominators of level $1$ and gradually add markings whenever the part of the path between visits is too short. In the translation procedure, for each vertex $v$, we determine the smallest marked segment on the same sheet which contains $v$ and translate that segment by the coordinate of its endpoints.
\end{rem}
\begin{defn}
\label{def_trans_eq}
Two eigenvalue/eigenvector paths $\CP$ and $\CQ$ are called {\it translationally equivalent} if $T(\CP)=T(\CQ)$.
Denote by $[\CP]$ the equivalence class of $\CP$, and let
$$
\cont([\CP]):=\sum_{\CP\in [\CP]}\cont(\CP).
$$
\end{defn}
\begin{rem}
\label{rem_summary2}
Grouping the paths into translational equivalence classes is one of the central ideas of the paper. In all applications, the cancellations will happen inside the equivalence classes. In fact, the proof of the convergence of the perturbation series 
$$
\sum_{k=0}^{+\infty}\ep^k\left(\sum_{\CP\colon |\CP|=k}\cont(\CP)\right)
$$
will be based on absolute convergence of the series
$$
\sum_{k=0}^{+\infty}\ep^k\left(\sum_{[\CP]\colon |\CP|=k}|\cont([\CP])|\right),
$$
where we assume that each equivalence class is counted once in the summation. It is easy to see (cf. Remark \ref{rem_summary3}) that the series for individual loops
\bee
\label{eq_divergent}
\sum_{k=0}^{+\infty}\ep^k\left(\sum_{\CP\colon |\CP|=k}|\cont(\CP)|\right)=\sum_{\CP}\ep^{|\CP|}|\cont(\CP)|.
\ene
will diverge if $\lambda_0$ is not an isolated eigenvalue of $H$ at $\ep=0$ and the hopping term is the usual Laplacian (if one considers more general hopping, one needs to assume that $\Gamma$ is not ``too disconnected'').
\end{rem}
\subsection{Some examples} The loop with canonical marking $\CL=(123[456[5]654]321)$ has four elements in its equivalence class: $(1234565654321)$, $(123(1232321)321)$, $(123(123(-1)321)321)$, and $(123456(-1)654321)$, graphically represented by
$$
\begin{tikzpicture}
	\begin{pgfonlayer}{nodelayer}
		\node [style=Vertex] (0) at (-1.5, 0) {$1$};
		\node [style=Vertex] (1) at (0.5, -1) {$2$};
		\node [style=Vertex] (2) at (2.75, -1.25) {$3$};
		\node [style=Vertex] (3) at (4.75, -0.25) {$4$};
		\node [style=Vertex] (4) at (6, 1.25) {$5$};
		\node [style=Vertex] (5) at (6.5, 3.25) {$6$};
		\node [style=Vertex] (6) at (6.25, 5.25) {$5$};
		\node [style=Vertex] (7) at (5.25, 6.75) {$6$};
		\node [style=Vertex] (8) at (3.75, 8) {$5$};
		\node [style=Vertex] (9) at (1.5, 8.25) {$4$};
		\node [style=Vertex] (10) at (-0.75, 7.5) {$3$};
		\node [style=Vertex] (11) at (-2.5, 6) {$2$};
		\node [style=Vertex] (12) at (-3, 4) {$1$};
		\node [style=Vertex] (13) at (-2.75, 1.75) {$0$};
		\node [style=Vertex] (14) at (5.25, -5) {$0$};
		\node [style=Vertex] (15) at (4.25, -3) {$1$};
		\node [style=Vertex] (16) at (5.5, -1.25) {$2$};
		\node [style=Vertex] (17) at (7.5, -1.25) {$3$};
		\node [style=Vertex] (18) at (8.75, -3) {$2$};
		\node [style=Vertex] (19) at (7.5, -5) {$1$};
		\node [style=base vertex] (20) at (8.75, 0.5) {$3$};
		\node [style=Vertex] (21) at (12.5, 4.25) {$2$};
		\node [style=Vertex] (22) at (8.75, 4.25) {$2$};
		\node [style=Vertex] (23) at (12.75, 0.5) {$2$};
		\node [style=Vertex] (24) at (8, 2.5) {$1$};
		\node [style=Vertex] (25) at (10.75, 5) {$3$};
		\node [style=Vertex] (26) at (13.25, 2.5) {$3$};
		\node [style=Vertex] (27) at (10.75, -0.25) {$1$};
	\end{pgfonlayer}
	\begin{pgfonlayer}{edgelayer}
		\draw [style=Arrow edge] (13) to (0);
		\draw [style=Arrow edge] (0) to (1);
		\draw [style=Arrow edge] (1) to (2);
		\draw [style=Arrow edge] (2) to (3);
		\draw [style=Arrow edge] (3) to (4);
		\draw [style=Arrow edge] (4) to (5);
		\draw [style=Arrow edge] (5) to (6);
		\draw [style=Arrow edge] (6) to (7);
		\draw [style=Arrow edge] (7) to (8);
		\draw [style=Arrow edge] (8) to (9);
		\draw [style=Arrow edge] (9) to (10);
		\draw [style=Arrow edge] (10) to (11);
		\draw [style=Arrow edge] (11) to (12);
		\draw [style=Arrow edge] (12) to (13);
		\draw (17) to (20);
		\draw [style=Arrow edge] (14) to (19);
		\draw [style=Arrow edge] (19) to (18);
		\draw [style=Arrow edge] (18) to (17);
		\draw [style=Arrow edge] (17) to (16);
		\draw [style=Arrow edge] (16) to (15);
		\draw [style=Arrow edge] (15) to (14);
		\draw [style=Arrow edge] (20) to (27);
		\draw [style=Arrow edge] (27) to (23);
		\draw [style=Arrow edge] (23) to (26);
		\draw [style=Arrow edge] (26) to (21);
		\draw [style=Arrow edge] (21) to (25);
		\draw [style=Arrow edge] (25) to (22);
		\draw [style=Arrow edge] (22) to (24);
		\draw [style=Arrow edge] (24) to (20);
	\end{pgfonlayer}
\end{tikzpicture}
$$
$$
\begin{tikzpicture}
	\begin{pgfonlayer}{nodelayer}
		\node [style=Vertex] (14) at (-3.5, -5.5) {$0$};
		\node [style=Vertex] (15) at (-4.5, -3.5) {$1$};
		\node [style=Vertex] (16) at (-3.25, -1.75) {$2$};
		\node [style=Vertex] (17) at (-1.25, -1.75) {$3$};
		\node [style=Vertex] (18) at (0, -3.5) {$2$};
		\node [style=Vertex] (19) at (-1.25, -5.5) {$1$};
		\node [style=base vertex] (20) at (0, 0) {$3$};
		\node [style=Vertex] (21) at (2, 0) {$1$};
		\node [style=Vertex] (22) at (3.25, 1.75) {$2$};
		\node [style=Vertex] (23) at (2.5, 3.5) {$3$};
		\node [style=Vertex] (24) at (0.25, 3.5) {$2$};
		\node [style=Vertex] (25) at (-0.75, 1.75) {$1$};
		\node [style=base vertex] (27) at (4.75, 3.75) {$3$};
		\node [style=Vertex] (28) at (7, 3.75) {};
		\node [style=Vertex] (29) at (7, 3.75) {};
		\node [style=Vertex] (30) at (7, 3.75) {$-1$};
		\node [style=Vertex] (31) at (6.5, -6.75) {$1$};
		\node [style=Vertex] (32) at (6.5, 1.25) {$5$};
		\node [style=Vertex] (33) at (2.5, -2.75) {$2$};
		\node [style=Vertex] (34) at (10.5, -2.75) {$4$};
		\node [style=Vertex] (35) at (8.75, 0.75) {$6$};
		\node [style=Vertex] (36) at (10.25, -1) {$5$};
		\node [style=Vertex] (37) at (10, -4.75) {$3$};
		\node [style=Vertex] (38) at (8.5, -6.25) {$2$};
		\node [style=Vertex] (40) at (3, -4.75) {$1$};
		\node [style=Vertex] (41) at (3, -0.5) {$3$};
		\node [style=Vertex] (42) at (4.5, 0.75) {$4$};
		\node [style=Vertex] (44) at (12.5, 2.25) {$-1$};
		\node [style=base vertex] (45) at (10.5, 2.25) {};
		\node [style=base vertex] (47) at (10.5, 2.25) {$6$};
		\node [style=Vertex] (48) at (4.5, -6.25) {$0$};
	\end{pgfonlayer}
	\begin{pgfonlayer}{edgelayer}
		\draw (17) to (20);
		\draw [style=Arrow edge] (14) to (19);
		\draw [style=Arrow edge] (19) to (18);
		\draw [style=Arrow edge] (18) to (17);
		\draw [style=Arrow edge] (17) to (16);
		\draw [style=Arrow edge] (16) to (15);
		\draw [style=Arrow edge] (15) to (14);
		\draw [style=Arrow edge, bend right, looseness=1.25] (27) to (30);
		\draw [style=Arrow edge, bend left=330, looseness=1.25] (30) to (27);
		\draw (23) to (27);
		\draw [style=Arrow edge] (20) to (21);
		\draw [style=Arrow edge] (21) to (22);
		\draw [style=Arrow edge] (22) to (23);
		\draw [style=Arrow edge] (23) to (24);
		\draw [style=Arrow edge] (24) to (25);
		\draw [style=Arrow edge] (25) to (20);
		\draw (35) to (47);
		\draw [style=Arrow edge, bend right] (47) to (44);
		\draw [style=Arrow edge, bend left=330] (44) to (47);
		\draw [style=Arrow edge] (48) to (31);
		\draw [style=Arrow edge] (31) to (38);
		\draw [style=Arrow edge] (38) to (37);
		\draw [style=Arrow edge] (37) to (34);
		\draw [style=Arrow edge] (34) to (36);
		\draw [style=Arrow edge] (36) to (35);
		\draw [style=Arrow edge] (35) to (32);
		\draw [style=Arrow edge] (32) to (42);
		\draw [style=Arrow edge] (42) to (41);
		\draw [style=Arrow edge] (41) to (33);
		\draw [style=Arrow edge] (33) to (40);
		\draw [style=Arrow edge] (40) to (48);
	\end{pgfonlayer}
\end{tikzpicture}
$$
\begin{rem}
\label{rem_summary3}
As mentioned in Remark \ref{rem_summary2}, the cancellation will be achieved inside each equivalence class. Before proceeding with the general construction, let us illustrate the usefulness of these equivalence classes on the following example. Let $d=1$, $V_{n}=f(n\omega)$, where, say, $f$ is a smooth $1$-periodic function on $\mathbb R$ such that and $f'(x)\ge 1$ in a neighborhood of $0$. Let $\omega\in \R\setminus\Q$ and $\Phi$ be the usual discrete Laplacian operator:
$$
\Phi_{ij}=(\Delta)_{ij}=\delta_{i,j+1}+\delta_{i,j-1}, \quad H=V+\varepsilon\Delta.
$$
Suppose that, say, $\dist(5\omega,\Z)\ll \varepsilon$, so that $|V_5|=|f(0)-f(5\omega)|\ll \varepsilon$. Suppose also that $5$ is the only small denominator: 
$$
\level(5)=1, \quad \level(1)=\level(2)=\level(3)=\level(4)=0,\quad \safedist(5)\ge 3.
$$
Typically, we would have $|V_0-V_5|\approx\varepsilon^{\safedist(5)}$, which would mean that one needs to make $\safedist(5)\ge 3$ steps to compensate for the factor $|f(0)-f(5\omega)|^{-1}$. Consider the following eigenvalue path:
$$
\CP_k=(123454545\ldots454321)
$$
where $\ldots$ represents repetition of "45" so that the loop has $k$ visits of $5$. Clearly,
$$
\cont(\CP_k)=(V_0-V_1)^{-1}(V_0-V_2)^{-1}(V_0-V_3)^{-1}(V_0-V_4)^{-(k+1)}(V_0-V_5)^{-k}(V_0-V_3)^{-1}(V_0-V_2)^{-1}(V_0-V_1)^{-1}.
$$
Since the loop $\CP_k$ only makes two steps between several consecutive visits of 5, it is considered non-safe, and its contributions to \eqref{eq_divergent} will blow up, even with the factor $\varepsilon^{|\CP_k|}$, as $k\to \infty$.
The canonical marking of $\CP$ will be as follows:
$$
\CP=(12345[4]5[4]5\ldots[4]54321),
$$
with the canonical translation
$$
T(\CP)=(12345(-1)5(-1)5\ldots(-1)54321).
$$
Here, the vertices in round brackets denote vertices on the sheet above $5$. After the translation, the vertex $4$ becomes $-1$. In both cases, there are $k-1$ times when a sheet of height $1$ is visited. The whole translation equivalence class consists of $2^{k-1}$ elements, since there are $k-1$ choices between $[4]$ and $(-1)$. One can factor the total contribution of the equivalence class as follows:
\begin{multline*}
\cont([\CP])=\cont(123454321)(V_0-V_5)^{-(k-1)}((V_0-V_4)^{-1}-(V_0-V_{-1})^{-1})^{k-1}\\
=\cont(123454321)\frac{((f(0)-f(4\omega))^{-1}-(f(0)-f(-\omega))^{-1})^{k-1}}{(f(0)-f(5\omega))^{k-1}},
\end{multline*}
since the change from $[4]$ to $(-1)$ replaces one of the terms $-V_{4}$ by $V_{-1}$. Note that, since $\|5\omega\|$ is small, we have $f(-\omega)\approx f(4\omega)$. Therefore,
$$
|(f(0)-f(4\omega))^{-1}-(f(0)-f(-\omega))^{-1}|\approx \|5\omega\| \frac{|f'(4\omega)|}{(f(0)-f(4\omega))^2}
$$
and
$$
|f(0)-f(5\omega)|\approx f'(0)\|5\omega\|\ge \|5\omega\|,
$$
which implies that
$$
|\cont(\CP)|\approx \cont(123454321)\frac{|f'(4\omega)|^{k-1}}{f'(0)^{k-1}|f(0)-f(4\omega)|^{2k-2}}.
$$
Since $f$ does not vanish around $4\omega$, we can see that the ``bad'' factor $\|5\omega\|^{-1}$ enters $\cont(\CP)$ with power $1$ rather than $k$.
\end{rem}
Unfortunately, applying this general idea to more complicated situations requires tedious bookkeeping, which we will now describe.
\subsection{Loop stacks} In order to describe the general case, motivated by the last example, we will use the following definition.
\begin{defn}
A {\it loop stack} is an eigenvalue/eigenvector path $\CP$ on $\Gamma$ satisfying the following:
\begin{enumerate}
	\item All loops of $\CP$ are safe.
	\item For any loop $\CL$ on $\CP$ that is attached to a vertex with coordinate $\bn$, we have $|\CL|<\safedist(\bn)$. In particular, the loops on $\CP$ can only be attached to small denominators.
\end{enumerate}
\end{defn}
We can now describe the translational equivalence class of any path $\CP$. Let $T(\CP)$ be the canonical translation of $\CP$. We will say that a loop $\CL$ on $T(\CP)$ is {\it short}, if it is attached to a vertex with coordinate $\bn$, with $|\CL|<\safedist(\bn)$. The base loop is never considered short. Since $T(\CP)\in [\CP]$, we can choose $T(\CP)$ as the ``natural representative'' of the equivalence class of $\CP$.
\begin{itemize}
\item
Let $\CP$ be an eigenvalue/eigenvector path on $\Gamma$, and suppose that $T(\CP)$ has $s$ short loops. Then $\#[\CP]=2^s$. The elements of $[\CP]$ are obtained by replacing some short loops on $T(\CP)$ by marked segments:
$$
\bm(\ldots)\bm \mapsto \bm[\ldots+\bm]\bm,
$$
where $(\ldots)$ denotes a short loop, and $\ldots+\bm$ denotes translation of all coordinates by $\bm$.
\item
All loops of $T(\CP)$ are safe, and $T(\CP)$ is the only element of $[\CP]$ with this property.
\item
One can introduce an attachment procedure for loop stacks similarly to loops: if $\CP_0$ is an eigenvalue/eigenvector path that has a vertex $v$ with coordinate $\bn$ and $\CP_1$ is an eigenvalue loop stack, one can attach $\CP_1$ to $\CP_0$ by replacing
$$
\bn\mapsto \bn\CP_1\bn
$$
in the string that describes $\CP_0$. Any path $\CP$ with $T(\CP)=\CP$ can be obtained from a base (eigenvalue/eigenvector) loop stack by a sequence of the above attachment operations, at the points of the base stack or already attached loop stacks. Each attached loop stack becomes a non-base loop stack on $\CP$.

Now, given a path $\CP=T(\CP)$, we would like to break it into loop stacks in some natural way. In general, there are multiple ways of doing so: for example, one can treat each loop as a separate loop stack. The opposite of this would be a decomposition with smallest possible number of loop stacks. Note that any stack may contain at most one non-short loop (which can only be its base loop). As a consequence, each non-short loop $\CL$ of $\CP$, including the base loop of $\CP$, must be the base loop of some stack. For each such loop, we can form the largest possible stack that starts from that loop using the following ``greedy algorithm'': consider all short loops attached to $\CL$; now, consider all short loops attached to the loops considered before, and repeat the process until we run out of attached short loops. It is easy to see that these stacks will not overlap and will ultimately contain each loop on $\CP$. We will call these stacks {\it maximal}.
\item
Suppose that the above procedure results in the path $\CP=T(\CP)$ being decomposed into maximal stacks $\CP_0,\ldots,\CP_n$, where $\CP_0$ contains the base loop of $\CP$, and $\CP_j$ is attached to a small denominator $\bn_j$ on one of the previous stacks $\CP_0,\ldots,\CP_{j-1}$. In particular, the base loops of $\CP_0,\CP_1,\ldots,\CP_n$ are all non-short loops on $\CP$. Then
\bee
\label{eq_end_sec2}
\cont([\CP])=\cont(\CP_0)\prod_{s=1}^n 
(V_{\bn_s}-V_{\bze})^{-1}\cont([\CP_s]).
\ene
\end{itemize}
Our next objective is to obtain a bound of the form $\ep^{|\CP_s|}|\cont([\CP_s])|\lesssim \ep^{c|\CP_s|}$ by taking advantage of cancellations inside $[\CP_s]$. In other words, we expect $[\CP_s]$ to behave as a safe loop of comparable length, thanks to the base loop of $\CP_s$ being non-short. Since $|\CP_s|\ge \safedist(\bn_s)$, we can absorb the $(V_{\bn_s}-V_{\bze})^{-1}$ factors into $\cont([\CP_s])$ if we have bounds of the form
$$
\ep^{c\cdot  \safedist(\bn_s)}\ll |V_{\bze}-V_{\bn_s}|.
$$
In the quasiperiodic case, this will be achieved by imposing a Diophantine condition on the frequency, combined with choosing a function $\safedist$ that does not grow too slowly, see Section 5 and, in particular, Theorem \ref{th_loop_stack}.
\section{Convergence of the perturbation series}
\subsection{Regular points of monotone potentials}
The cancellation procedure described above will be applied to quasiperiodic operators, that is, when
$$
V_{\bn}=f(x+\bn\cdot\omega),
$$
$f$ is an $1$-periodic function on $\mathbb R$, and $\omega=(\omega_1,\ldots,\omega_d)$ is a vector with rationally independent components. Theorem \ref{th_lak} gives an asymptotic series for an eigenvalue which would have been an analytic continuation of the eigenvalue $V_{\bze}=f(x)$ of the operator at $\varepsilon=0$, if that eigenvalue was isolated. In our case, it is not isolated; however, as long as the algebraic non-resonance condition
$$
f(x+\bn\cdot\omega)\neq f(x)\quad \text{for}\,\,\bn\neq\bze
$$
is satisfied, each term of the perturbation series is well-defined as discussed in Section 3.

We will formulate the convergence results locally, that is, for fixed $x$, assuming that $f$ is monotone and one-to-one in a neighborhood of $x$. We will also provide sufficient conditions for obtaining convergent expansions for eigenvectors for all $x\in \R\setminus(\Z+1/2+\Z^d\cdot\omega)$. In all cases, we will consider the eigenvector associated with the origin (that is, a perturbation series that starts from $\psi_0=e_{\bze}$, $\lambda_0=f(x)$). However, if we obtain such eigenfunctions for all $H(x)$ with $x\in \{x_0+\bn_0\cdot \omega:\bn_0\in \Z^d\}$, their translations will form a complete system of eigenvectors of $H(x_0)$ sufficient to establish Anderson localization for that operator.

We will always assume the following: 
\begin{itemize}
	\item[(f1)] $f\colon (-1/2,1/2)\to \R$ is continuous, $\quad f(0+0)=-\infty,\quad f(1-0)=+\infty$, and is extended by $1$-periodicity into $\mathbb R\setminus(\Z+1/2)$.
\end{itemize}
Suppose, $f$ satisfies $\mathrm{(f1)}$. Let 
$$
C_{\mathrm{reg}}>0,\quad  x_0\in (-1/2,1/2)
$$
We say that $f$ is $C_{\mathrm{reg}}$-regular at $x_0$, if 
\begin{itemize}
\item[(cr0)]The pre-image $f^{-1}((f(x_0)-2,f(x_0)+2))\cap (-1/2,1/2)$ is an open interval (denoted by $(a,b)$), and $\left.f\right|_{(a,b)}$ is a one-to-one map between $(a,b)$ and $(f(x_0)-2,f(x_0)+2)$.
\item[(cr1)]Let $D_{\min}(x_0):=\inf\limits_{x\in (a,b)}f'(x)$. Then,
\bee
\label{eq_cr1}
D_{\min}(x_0)\le f'(x)\le C_{\mathrm{reg}} D_{\min}(x_0),\quad \forall x\in (a,b).
\ene
At the points where $f'(x)$ does not exist, we require that the inequalities hold for all derivative numbers (the $\inf$ is also taken over the set of all derivative numbers).
\item[(cr2)]Define $(a_1,b_1)=f^{-1}(f(x_0)-1,f(x_0)+1)\subset (a,b)$, and

$$
g(x)=\frac{1}{f(x_0)-f(x)}, \quad x\in (b_1,a_1+1),
$$
extended by continuity to $g(0)=g(1)=0$ (recall that we also assume $f(x+1)=f(x)$, so that the interval $(b_1,a_1+1)$ is essentially $(-1/2,1/2)
\setminus(a_1,b_1)$ together with the point $0=1\,\,\mathrm{mod}\,\,1$). Then, under the same conventions on the existence of derivatives,
$$
|g'(x)|\le C_{\mathrm{reg}} D_{\min}(x_0),\quad x\in (b_1,a_1+1).
$$
\end{itemize}
\begin{rem}
Suppose, $f$ satisfies (f1) and 
$$
f'(x)<C f'(y) (1+|f(x)-f(y)|^2)\quad \text{for all}\,\,x,y\in (-1/2,1/2).
$$
Then $f$ is $C_{\mathrm{reg}}$-regular on $(-1/2,1/2)$ (with the value of $C_{\reg}$ depending on $C$). In particular, any meromorphic function satisfying $\mathrm{(f1)}$, such that $f'(x)>0$ on $(-1/2,1/2)$, has this property.
\end{rem}
\begin{rem}
One can see Condition (cr1) as a statement that $f'$ does not oscillate too much on intervals of length determined by the change of the values of $f$. Note that the interval $(a,b)$ will shrink as one of its endpoints approaches $\{-1/2,1/2\}$. Condition (cr2) is a ``regularity at infinity'' condition, which will be important to account for non-small denominators.
\end{rem}
\begin{rem}
\label{rem_creg}
In some later results, it will be convenient to consider a ``rescaled'' version of $C_{\reg}$-regularity.  To obtain the definition of $C_{\reg,\nu}$-regularity, replace the interval $(f(x_0)-2,f(x_0)+2)$ in (cr0) by $(f(x_0)-2\nu,f(x_0)+2\nu)$ and the interval $(f(x_0)-1,f(x_0)+1)$ in (cr2) by $(f(x_0)-\nu,f(x_0)+\nu)$.
\end{rem}
%\subsection{Outline}
%It was observed in Section 2 that the the perturbation series for the eigenvalue has the following form:
%\label{eq_new_series}
%\lambda(\varepsilon)=\sum_{j=0}^{\infty} \varepsilon^j \sum\limits_{|\CP|=j}C(\CP),
%\ene
%and the last sum is a sum of at most $c(\varphi)^j$ terms of the form $C([\CP_1])C([\CP_2])\ldots C([\CP_k])$, where 

%It was observed in Section 2 that, in order to establish convergence of perturbation series, it is sufficient to establish a bound
%\bee
%\label{eq_single_multiplicative}
%|C([\CP])|\le (C_1\varepsilon)^{-C_2|\CP|},
%\ene
%where $C_1\varepsilon\ll 1$, $C_2\ll 1$, for all refined loop configurations $\CP$ that are obtained only using rules (1) -- (3) of Definition \ref{def_refined0}. Indeed, \eqref{eq_multiplicative} implies that the contribution of any refined loop configuration is a product of factors \eqref{eq_single_multiplicative}. The total number of configurations of length $j$ is bounded by the combinatorial factor $c(\varphi)^{j}$. Hence, both the product of \eqref{eq_single_multiplicative} and the combinatorial factor will be dominated by $\varepsilon^j$, and together will be bounded by a term of a convergent geometric series. In this section, we obtain estimates for $C([\CP_j])$.

\subsection{Consistent level functions for monotone quasiperiodic potentials}
We will start by constructing a consistent denominator data on $\Z^d$, suitable for use with locally monotone quasiperiodic operators. Recall that we need to define two functions
$\safedist$ and $\level$, satisfying
\begin{itemize}
	\item[(c0)] $\level(\bze)=+\infty$, $\safedist(0)=0$. The function $\safedist$ is monotone non-decreasing in its argument.
	\item[(c1)] $\dist_{\varphi}(\bm,\bn)\ge \min\{\safedist(\bm),\safedist(\bn)\}$, for $\bm\neq \bn$.
	\item[(c2)] Suppose $0<\dist_{\varphi}(\bn,\bm)<\safedist(\bm)$. Then $\level(\bn-\bm)=\level(\bn)$.
\end{itemize}
Let $\omega\in (-1/2,1/2)^d$ be a Diophantine frequency vector, that is,
\bee
\label{eq_diophantine}
\|\bn \cdot\omega\|=\dist(\bn\cdot\omega,\Z)\ge C_{\mathrm{dio}}|\bn|^{-\tau},\quad \forall \bn\in \Z^d\setminus\{\bze\}.
\ene
%\bee
%\label{eq_diophantine2}
%\|\bn \cdot\omega^2\|=\dist(\bn\cdot\omega^2,\Z^d)\ge C_{\mathrm{dio}}|\bn|^{-\tau},
%\ene
%where $\omega^2=(\omega_1^2,\ldots,\omega_d^2)$. The latter condition implies
%$$
%\dist (\|\bn\cdot\omega\|,\Z)\ge C_{\mathrm{dio}}|\bn|^{-\tau}.
%$$
We will always assume that some $\tau>1$ is fixed.
\begin{thm}
\label{th_consistent_diophantine}
Fix $C_{\mathrm{safe}}>0$ and, for $\beta>0$, define
\bee
\label{eq_beta_def}
\beta_k:=\lfloor \beta^{-k-1}\rfloor^{-1},\quad k\in \Z_+;\quad \beta_{-1}:=+\infty.
\ene
Define the level and safe distance functions as follows:
\bee
\label{eq_leveling}
\level(\bn)=k\quad \text{iff} \quad \beta_{k}\le \|\bn\cdot \omega\|<\beta_{k-1},\quad \bn\in \Z^d\setminus\{\bze\};\quad \level(\bze)=+\infty;
\ene
\bee
\label{eq_safe_distance}
\safedist(\bn)=\lceil C_{\mathrm{safe}} \level(\bn)^3\rceil,\quad \bn\in \Z^d.
\ene
Then, for 
\bee
\label{eq_epsilon_safe}
0<\beta<\beta_{\max}(C_{\mathrm{safe}},C_{\mathrm{dio}},\dist_{\varphi},\tau)
\ene
the constructed functions $\level(\bn)$ and $\safedist(\bn)$ satisfy $(c1)$ and $(c2)$, and therefore define a consistent denominator data on $\Z^d$.\end{thm}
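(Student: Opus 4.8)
The plan is to verify conditions (c1) and (c2) directly from the Diophantine inequality $\eqref{eq_diophantine}$, after two preliminaries. First, I would fix $R=R(\varphi)\ge 1$, the diameter of $\supp\varphi$, so that each step along the base sheet of $\Gamma$ changes a coordinate by at most $R$ in norm and hence $|\bk-\bk'|\le R\,\dist_{\varphi}(\bk,\bk')$ for all $\bk,\bk'\in\Z^d$. Second, from $x-1\le\lfloor x\rfloor\le x$ one gets, for $\beta\le 1/2$ and $k\ge 0$, the estimates $\beta^{k+1}\le\beta_k\le 2\beta^{k+1}$, hence $\beta_{k-1}\le 2\beta^{k}$ for $k\ge 1$; and, crucially, $\beta_k^{-1}=\lfloor\beta^{-k-1}\rfloor\in\N$ — this integrality is exactly what the floor in $\eqref{eq_beta_def}$ is for, and it will control the boundary effects below. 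The single uniform fact driving everything is: if $\beta\le e^{-3\tau}$ then $t\mapsto t^{3\tau}\beta^{t}$ is non-increasing on $[1,\infty)$, so $\sup_{k\ge 1}k^{3\tau}\beta^{k}=\beta$. Every requirement on $\beta$ below will have the shape ``$\beta$ smaller than an explicit constant depending only on $\tau,C_{\mathrm{dio}},C_{\mathrm{safe}},R$,'' together with a hands-on treatment of the finitely many levels $k\le\tau+1$; collecting these gives $\beta_{\max}$.

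I would prove (c1) by contraposition. Suppose $\bm\neq\bn$ and $l:=\dist_{\varphi}(\bm,\bn)<\min\{\safedist(\bm),\safedist(\bn)\}$; since $\safedist$ is monotone in the level, this minimum equals $\lceil C_{\mathrm{safe}}(k')^{3}\rceil$ where $k':=\min\{\level(\bm),\level(\bn)\}$, and we may assume $k'\ge 1$, the case $k'=0$ being vacuous. Then $\bw:=\bm-\bn\neq\bze$ satisfies $|\bw|\le Rl<R\lceil C_{\mathrm{safe}}(k')^{3}\rceil$, while the triangle inequality for $\|\cdot\|=\dist(\cdot,\Z)$ and $\level(\bm),\level(\bn)\ge k'$ give $\|\bw\cdot\omega\|\le\|\bm\cdot\omega\|+\|\bn\cdot\omega\|<2\beta_{k'-1}\le 4\beta^{k'}$. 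Against this, $\eqref{eq_diophantine}$ yields $\|\bw\cdot\omega\|\ge C_{\mathrm{dio}}|\bw|^{-\tau}\gtrsim (k')^{-3\tau}$. Comparing, $(k')^{3\tau}\beta^{k'}\gtrsim 1$, which contradicts the uniform estimate once $\beta$ is small; hence (c1).

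For (c2), let $\bm$ be a small denominator with $k:=\level(\bm)\ge 1$ and suppose $0<\dist_{\varphi}(\bn,\bm)<\safedist(\bm)$. Put $\bv:=\bn-\bm\neq\bze$; then $|\bv|<R\,\safedist(\bm)=:M_k$, which is polynomial in $k$, and $\|\bm\cdot\omega\|<\beta_{k-1}\le 2\beta^{k}$. The first step is to bound $j:=\level(\bv)$: $\eqref{eq_diophantine}$ gives $\|\bv\cdot\omega\|\ge C_{\mathrm{dio}}M_k^{-\tau}\gtrsim k^{-3\tau}$, and since $\|\bv\cdot\omega\|<\beta_{j-1}\le 2\beta^{j}$ this forces $\beta^{j}\gtrsim k^{-3\tau}$, i.e.\ $j=O(\log k)$; in particular $j\le k-2$ once $\beta$ is small, while the remaining $k\le\tau+1$ are handled directly (there $M_k$ is bounded, so $\level(\bv)=0$, and $\|\bm\cdot\omega\|<\beta_{k-1}\to 0$ forces $\level(\bn)=0$ too). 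Now the heart of the matter: suppose $\level(\bn)\neq j$. Because $\bigl|\,\|\bn\cdot\omega\|-\|\bv\cdot\omega\|\,\bigr|\le\|\bm\cdot\omega\|<\beta_{k-1}$ while $\|\bv\cdot\omega\|\in[\beta_j,\beta_{j-1})$, the number $\|\bv\cdot\omega\|$ must lie within $\beta_{k-1}$ of one of the endpoints $\beta_j,\beta_{j-1}$ of the level-$j$ window; call that endpoint $\beta_i$, so $0\le i\le j$, and set $N_i:=\beta_i^{-1}=\lfloor\beta^{-i-1}\rfloor\in\N$, which satisfies $N_i\le\beta^{-j-1}$. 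Then ``$\|\bv\cdot\omega\|$ is $\beta_{k-1}$-close to $N_i^{-1}$'' turns into ``$N_i\bv\cdot\omega$ is $N_i\beta_{k-1}$-close to an integer,'' that is, $\|(N_i\bv)\cdot\omega\|\le N_i\beta_{k-1}\le 2\beta^{k-i-1}$. But $N_i\bv\neq\bze$ with $|N_i\bv|\le N_iM_k$ still polynomial in $k$ (since $j=O(\log k)$), so $\eqref{eq_diophantine}$ gives $\|(N_i\bv)\cdot\omega\|\ge C_{\mathrm{dio}}|N_i\bv|^{-\tau}$, only polynomially small in $k$; as $i\le j=O(\log k)\ll k$, the upper bound $2\beta^{k-i-1}$ is exponentially small, and comparing the two once more produces $k^{O(1)}\beta^{k-(j+1)(\tau+1)}\gtrsim 1$ with exponent $k-(j+1)(\tau+1)\ge 1$ on the relevant range, contradicting the uniform estimate for $\beta$ small. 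Therefore $\level(\bn)=j=\level(\bv)$, which is (c2); (c0) is immediate from the definitions.

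The one genuine obstacle is the boundary effect in (c2): a priori $\bn$ and $\bn-\bm$ could fall on opposite sides of a level boundary $\beta_i$. It is defeated exactly by the arithmetic form $\beta_i^{-1}\in\N$ — being $\beta_{k-1}$-close to $\beta_i$ is then a near-resonance of the lattice vector $N_i\bv$ — combined with the fact that $\safedist$ grows only polynomially in the level, so that $N_i\bv$ remains short enough for the Diophantine bound to apply. Everything else is bookkeeping: carrying the implied constants carefully enough that a single $\beta_{\max}(C_{\mathrm{safe}},C_{\mathrm{dio}},\dist_{\varphi},\tau)$ works for all $k$ simultaneously, and checking the handful of small levels where $j=O(\log k)$ is not yet $\le k-2$.
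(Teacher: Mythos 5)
Your proof is correct and follows essentially the same route as the paper: both arguments hinge on the observation that $\beta_i^{-1}=\lfloor\beta^{-i-1}\rfloor\in\N$, so that ``$\|\bv\cdot\omega\|$ close to $\beta_i$'' becomes ``$\|(\beta_i^{-1}\bv)\cdot\omega\|$ small'' and the Diophantine bound applies to the integer multiple (the paper encodes this as \eqref{eq_lower_dio}), and both use the polynomial growth of $\safedist$ to keep $|\bn-\bm|$ and hence $\beta_i^{-1}|\bn-\bm|$ polynomially bounded in the level. Your contrapositive framing of (c2) together with the preliminary bound $\level(\bn-\bm)=O(\log\level(\bm))$ is just different bookkeeping for the paper's case split between \eqref{eq_lower_better} and \eqref{eq_lower_dio}/\eqref{eq_lower_alternative}, and is somewhat more transparent about why the floor in \eqref{eq_beta_def} is essential.
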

\begin{proof}
Suppose, $\min\{\level(\bn),\level(\bm)\}=k\ge 1$. The Diophantine condition implies that, if $\bn\neq \bm$,
\bee
\label{eq_safedist_exponential}
\dist_{\varphi}(\bm,\bn)\ge c_1(C_{\mathrm{dio}},\tau,\dist_{\varphi})\beta^{-k/\tau}.
\ene
One can obtain (c1) as long as $\beta_{\max}$ is chosen to satisfy
$$
C_{\mathrm{safe}} k^3 \le c_1(C_{\mathrm{dio}},\tau,\dist_{\varphi})\beta^{-k/\tau},
$$
that is,
$$
\beta\le c_2(C_{\mathrm{dio}},\tau,\dist_{\varphi})^{\tau/k}C_{\mathrm{safe}}^{-\tau/k}k^{-3\tau/k},\quad \forall k\ge 1.
$$
The worst case is, essentially, either $k=1$ or $k\to\infty$ (up to a factor that can be absorbed into $c_2$), and hence $\eqref{eq_epsilon_safe}$ implies (c1), under an appropriate choice of $\beta_{\max}(C_{\mathrm{safe}},C_{\mathrm{dio}},\dist_{\varphi},\tau)$.

Let us establish (c2). Let $\bm,\bn\in \Z^d$. By the Diophantine condition, for $k\ge 0$,
\bee
\label{eq_lower_dio}
|\|(\bn-\bm)\cdot\omega\|-\beta_k|\ge \lfloor  \beta^{-k-1}\rfloor^{-1}\|\lfloor \beta^{-k-1}\rfloor(\bn-\bm)\cdot\omega\|\ge C_{\mathrm{dio}}\beta_k^{1+\tau}|\bn-\bm|^{-\tau}.
\ene
If 
\bee
\label{eq_if47}
C_{\mathrm{dio}}|\bn-\bm|^{-\tau}\ge 2\beta_k,
\ene
then one can also obtain a better bound
\bee
\label{eq_lower_better}
|\|(\bn-\bm)\cdot\omega\|-\beta_k|\ge \|(\bn-\bm)\cdot\omega\|-\beta_k \ge \frac12 C_{\mathrm{dio}}|\bn-\bm|^{-\tau}.
\ene
If \eqref{eq_lower_better} cannot be obtained, then the opposite of \eqref{eq_if47} holds, that is,
\bee
\label{eq_lower_alternative}
|\bn-\bm|>2^{-1/\tau} C_{\mathrm{dio}}^{1/\tau} \beta_k^{-1/\tau}.
\ene
Now, suppose that $0<\dist_{\varphi}(\bn,\bm)<\safedist(\bm)$. Then \eqref{eq_safe_distance} yields
$$
\level(\bm)\ge C_{\mathrm{safe}}^{-1/3}\dist_{\varphi}(\bn,\bm)^{1/3}\ge C_{\mathrm{safe}}^{-1/3}c(\dist_{\varphi})|\bn-\bm|^{1/3}>0,
$$
which, combined with \eqref{eq_leveling}, implies
\bee
\label{eq_momega_bound}
\|\bm\cdot\omega\|\le 2\beta^{\lceil C_{\mathrm{safe}}^{-1/3}c(\dist_{\varphi})|\bn-\bm|^{1/3}\rceil}.
\ene
We would like to show that addition of $\bm$ will not change the level of $\bn-\bm$. This would follow from
$$
\|\bm\cdot\omega\|<\min\{\|(\bn-\bm)\cdot\omega\|-\beta_k,\beta_{k-1}-\|(\bn-\bm)\cdot\omega\|\},\quad \text{where}\quad k=\level(\bn-\bm).
$$
We will verify that the above inequality follows from the lower bounds \eqref{eq_lower_dio}, \eqref{eq_lower_better} and the upper bound \eqref{eq_momega_bound}. Suppose, \eqref{eq_lower_better} holds. Then we need
\bee
\label{eq_eps_gamma0_1}
2\beta^{\lceil C_{\mathrm{safe}}^{-1/3}c(\dist_{\varphi})|\bn-\bm|^{1/3}\rceil}<\frac12 C_{\mathrm{dio}}|\bn-\bm|^{-\tau}.
\ene
Suppose now that \eqref{eq_lower_better} does not hold and we have to use \eqref{eq_lower_dio}. Then one needs to establish
\bee
\label{eq_eps_gamma0_2}
2\beta^{\lceil C_{\mathrm{safe}}^{-1/3}c(\dist_{\varphi})|\bn-\bm|^{1/3}\rceil}<C_{\mathrm{dio}}\beta_k^{1+\tau}|\bn-\bm|^{-\tau}.
\ene
Using \eqref{eq_lower_alternative}, one can replace $|\bn-\bm|$ in the left hand side by 
$$
\delta_{k,k}:=\frac12 |\bn-\bm|+2^{-1-1/\tau}C_{\mathrm{dio}}^{1/\tau}\beta_k^{-1/\tau}
$$ and reduce to the inequality
\bee
\label{eq_412_reduced}
2\beta^{\lceil C_{\mathrm{safe}}^{-1/3}c(\dist_{\varphi})\delta_{k,k}^{1/3}\rceil}<C_{\mathrm{dio}}\beta_k^{1+\tau}|\bn-\bm|^{-\tau}.
\ene
Both \eqref{eq_eps_gamma0_1} and \eqref{eq_412_reduced} can be obtained by taking a sufficiently small $\beta$, as the bound in the left hand side is exponentially better both in $|\bn-\bm|$ and in $\beta_k$. If $k\ge 1$, one also needs a similar bound for $|\|(\bn-\bm)\cdot\omega\|-\beta_{k-1}|$, which reduces to
$$
2\beta^{\lceil C_{\mathrm{safe}}^{-1/3}c(\dist_{\varphi})|\bn-\bm|^{1/3}\rceil}<\frac12 C_{\mathrm{dio}}|\bn-\bm|^{-\tau}
$$
if $C_{\mathrm{dio}}|\bn-\bm|^{-\tau}\ge 2\beta_{k-1}$, and otherwise to
$$
2\beta^{\lceil C_{\mathrm{safe}}^{-1/3}c(\dist_{\varphi})\delta_{k,k-1}^{1/3}\rceil}<C_{\mathrm{dio}}\beta_{k-1}^{1+\tau}|\bn-\bm|^{-\tau},
$$
where 
$$
\delta_{k,k-1}:=\frac12 |\bn-\bm|+2^{-1-1/\tau}C_{\mathrm{dio}}^{1/\tau}\beta_{k-1}^{-1/\tau}.
$$
In both cases, the inequalities can be obtained in the same way as before.
\end{proof}

%\begin{rem}
%\label{rem_leveling}
%Suppose that $f$ is $(C_{\mathrm{reg}},r)$-regular at $x_0$. This implies
%$$
%|V_{\bn}|=|f(x_0+\bn\cdot\omega)-f(x_0)|\ge \min(r,C_{\mathrm{reg}}^{-1}f'(x_0)\|\bn\cdot\omega\|),
%$$
%and hence
%$$
%\bn\in \CD_k^- \quad \text{implies} \quad |V_{\bn}| \ge \min\{r,C_{\mathrm{reg}}^{-1}\varepsilon^{k+\delta_{\mathrm{lev}}}\};
%$$
%$$
%|V_{\bn}|> C_{\mathrm{reg}} \varepsilon^{k+\delta_{\mathrm{lev}}}  \quad  \text{implies} \quad \bn\in \CD_k^-.
%$$
%In particular, 
%$$
%\bn \in \CD_0 \quad  \text{implies} \quad |V_{\bn}|\ge \min\{r,C_{\mathrm{reg}}^{-1}\varepsilon^{\delta_{\mathrm{lev}}}\}.
%$$
	
%\end{rem}
\subsection{Lipschitz bounds for loop stacks} In this subsection, we will obtain the central inductive estimates on $\cont([\CP])$, where $\CP$ is a loop stack, by taking advantage of the cancellations. It will be convenient to introduce some notation in the beginning. In the remainder of the section, we will assume that $\beta\in (0,1)$ is fixed and the denominators are leveled according to Theorem \ref{th_consistent_diophantine}. The main results will be obtained under additional assumptions on $\beta$ being small enough. Let $\CP$ be an eigenvalue/eigenvector loop stack.
\begin{itemize}
\item 
Denote by $\height(\CP)$ the height of $\CP$, which is the maximal height of sheets of $\Gamma$ visited by $\CP$.
\item Let $\height(k)$ and $\maxlevel(k)$, respectively, be the maximal possible value of $\height(\CP)$ and the maximal level of a denominator on $\CP$, 
considered over all loop stacks $\CP$ with $|\CP|=k$.
\item
Let $m\in \mathbb R$. For each loop of $\CP$, calculate the maximal level of denominators on that loop. Among these numbers, consider only those that are $\ge m$ and add them together. Denote the resulting number by $\level(\CP,m)$. Let also $\level(\CP)=\level(\CP,0)$.
\item Let $\denominators(\CP,m)$ be the total number of denominators on $\CP$ of level $\ge m$. 
Here, each denominator is counted as many times as the corresponding lattice point is visited by $\CP$; however, the contributions from the descending edges are not counted. Let also $\totallevel(\CP,m)$ denote the sum of levels of all these denominators.
%\item
%Let $\maxlevel(k)$ be the maximal possible level of a single denominator that can appear on a small denominator configuration of size $k$. Note that $\maxlevel(k)$ is attained on some configuration which consists of a single safe loop.
\item Let $\loops(\CP,m)$ be the number of loops of $\CP$ that contain a denominator of level $\ge m$. We have $\den(\CP,0)+\loops(\CP,0)=|\CP|$.
\item Let $\nbloops(\CP,m)$ and $\nblevel(\CP,m)$ denote the same quantities as above ($\loops$ and $\level$), but the base loop is not counted.
\item Let also $\downedges(\CP,m)$ be the number of descending edges on $\CP$ that lead to denominators of level $\ge m$. We have $\downedges(\CP,0)=\loops(\CP,0)-1$, since $\CP$ has to exit each non-base loop once.
\end{itemize}
\begin{lem}
\label{lemma_l_h_bounds}
In the above notation, the following bounds hold:
\bee
\label{eq_l_bound}
\maxlevel(k)\le \max\left\{ c(C_{\mathrm{dio}},\tau,\dist_{\varphi})\frac{\log(k)}{\log(\beta^{-1})},0\right\}
\ene
\bee
\label{eq_h_bound}
\height(k)\le 1+\height(\safedist(\maxlevel(k)))=1+\height(\lceil C_{\mathrm{safe}}\maxlevel(k)^3\rceil).
\ene
\end{lem}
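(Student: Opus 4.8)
The two assertions have rather different characters, so I would prove them separately: \eqref{eq_l_bound} is a geometric fact that follows directly from the Diophantine estimate already extracted inside the proof of Theorem~\ref{th_consistent_diophantine}, while \eqref{eq_h_bound} is a recursion encoding the tree structure of a loop stack.

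First I would establish \eqref{eq_l_bound}. A loop stack is in particular an eigenvalue/eigenvector path, hence starts at the origin $\bze$ on the base sheet. Suppose a loop stack $\CP$ with $|\CP|=k$ visits a lattice point $\bm$ with $\level(\bm)=\ell\ge 1$, the visit occurring on some sheet $S$ of $\Gamma$. Since every ascending edge of $\Gamma$ ends at a vertex whose coordinate lies in $\supp\varphi$, the path enters $S$ (or leaves $\bze$, if $S$ is the base sheet) at a vertex whose coordinate is at $\dist_{\varphi}$-distance at most $1$ from $\bze$; travelling along $S$ from there to $\bm$ then costs at least $\dist_{\varphi}(\bze,\bm)-1$ same-sheet steps, so $k\ge \dist_{\varphi}(\bze,\bm)-1$. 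Applying \eqref{eq_safedist_exponential} to the pair $\bze,\bm$ (legitimate since $\level(\bze)=+\infty$ and $\level(\bm)=\ell$) gives $\dist_{\varphi}(\bze,\bm)\ge c_1(C_{\mathrm{dio}},\tau,\dist_{\varphi})\beta^{-\ell/\tau}$, hence $c_1\beta^{-\ell/\tau}\le k+1$; taking logarithms yields $\ell\le \tau(\log(k+1)-\log c_1)/\log(\beta^{-1})$. For $k$ past an absolute ($\beta$-dependent) threshold this is bounded by $c(C_{\mathrm{dio}},\tau,\dist_{\varphi})\log(k)/\log(\beta^{-1})$, and for smaller $k$ no small denominator is reachable in $k$ steps, so $\maxlevel(k)=0$; the $\max\{\cdot,0\}$ in \eqref{eq_l_bound} covers both cases.

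Next, for \eqref{eq_h_bound}, the equality $\safedist(\maxlevel(k))=\lceil C_{\mathrm{safe}}\maxlevel(k)^3\rceil$ is just \eqref{eq_safe_distance}, so only the inequality needs work, and I would run it as an induction on $k$ (with $\safedist(\maxlevel(k))<k$ once $k$ is large, and $\height(k)=0$ for small $k$ as the base case). Let $\CP$ be a loop stack of length $k$ with $\height(\CP)\ge 1$. Then $\CP$ uses an ascending edge; by the definition of a loop stack this first ascent occurs at a small denominator $\bn_1$ lying on the base loop of $\CP$, into an attached loop $\CL_1$ with $|\CL_1|<\safedist(\bn_1)=\safedist(\level(\bn_1))$. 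Since $\bn_1$ is a denominator on a length-$k$ loop stack, $\level(\bn_1)\le\maxlevel(k)$, and monotonicity of $\safedist$ gives $|\CL_1|<\safedist(\maxlevel(k))$. Choosing $\bn_1$ on the branch realizing the height of $\CP$, the portion of $\CP$ on sheets of height $\ge 1$ in that branch, translated down by one sheet, is again a loop stack $\CP^{\uparrow}$ with base loop $\CL_1$, and $\height(\CP)=1+\height(\CP^{\uparrow})$; it then remains to bound $\height(\CP^{\uparrow})$ by $\height(\safedist(\maxlevel(k)))$.

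The \emph{main obstacle} lies precisely here: although the base loop $\CL_1$ of $\CP^{\uparrow}$ has length $<\safedist(\maxlevel(k))$, the total length of $\CP^{\uparrow}$ need not be, so one cannot simply quote the definition of $\height(\safedist(\maxlevel(k)))$. The point that resolves it is that the height of a loop stack is governed by the length of its base loop, not its total length: any further ascent out of $\CP^{\uparrow}$ again occurs at a small denominator, now one lying on the loop $\CL_1$ of length $<\safedist(\maxlevel(k))$, so by \eqref{eq_l_bound} its level is at most $\maxlevel(\safedist(\maxlevel(k)))$, which is strictly smaller than $\maxlevel(k)$ for $k$ large; the loop entered by that ascent has length $<\safedist(\maxlevel(\safedist(\maxlevel(k))))$, and so on, the levels of the attachment points along any upward chain contracting under $\ell\mapsto\maxlevel(\lceil C_{\mathrm{safe}}\ell^3\rceil)$ and reaching $0$ after finitely many steps, whereupon no small denominator, hence no further attachment and no further ascent, is available. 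Thus the recursion is well-founded, and carrying it through — say, by proving by induction on $\maxlevel$ of the base-loop length that a loop stack whose base loop has length $<m$ has height at most that of a loop stack of length $m$ — yields $\height(\CP^{\uparrow})\le\height(\safedist(\maxlevel(k)))$ and hence \eqref{eq_h_bound}. The routine but nontrivial part is the bookkeeping that turns ``levels contract'' into the clean one-step recursion of \eqref{eq_h_bound}, matching constants so that the sub-stack $\CP^{\uparrow}$ genuinely costs no more height than a loop stack of length $\safedist(\maxlevel(k))$.
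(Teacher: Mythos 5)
Your treatment of \eqref{eq_l_bound} is correct and is what the paper means by ``follows directly from the Diophantine property'': any coordinate $\bm$ visited by a length-$k$ stack satisfies $\dist_\varphi(\bze,\bm)\le k+1$ (in fact one can even say $\dist_\varphi(\bze,\bm)\lesssim k/2$, since the walk must also return), and \eqref{eq_safedist_exponential} then caps $\level(\bm)$; the threshold bookkeeping together with the $\max\{\cdot,0\}$ handles the small-$k$ regime.

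For \eqref{eq_h_bound} you follow the paper's outline and, more usefully, you put your finger on the step that the paper's two-sentence proof silently skips: the sub-stack $\CP^{\uparrow}$ has a \emph{base loop} of length $<\safedist(\maxlevel(k))$, but $|\CP^{\uparrow}|$ itself can be of order $k$, so the definition of $\height(\safedist(\maxlevel(k)))$ cannot be quoted directly. Your diagnosis, namely that the height of a loop stack is governed by the base-loop length because the attachment levels contract along any upward chain, is the right mechanism and is exactly what makes Remark~\ref{rem_multilog} work. However, the intermediate claim you propose to make this precise (``a loop stack whose base loop has length $<m$ has height at most that of a loop stack of length $m$'') is not a direct consequence of the definitions and need not hold verbatim: a stack with base loop of length $m-1$ and a single length-$2$ attachment already has total length $m+1$, so pruning to a stack of total length $\le m$ with the same height is not automatic. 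The clean fix is to promote the inductive quantity: introduce $\height^*(m):=\max\{\height(\CP)\colon \CP\text{ a loop stack with base loop of length}\le m\}$, observe that $\height^*$ dominates $\height$ and, by the level contraction you describe, satisfies $\height^*(m)\le 1+\height^*(\safedist(\maxlevel(m)))$; the recursion \eqref{eq_h_bound} is then best read as a statement about $\height^*$, and nothing downstream changes. With that adjustment your argument is complete and the substantive conclusion --- that $\height(k)$ grows slower than any iterated logarithm once $\beta$ is chosen as in Remark~\ref{rem_multilog} --- stands.
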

\begin{proof}
Let $\CP$ be an eigenvalue/eigenvector loop stack with $|\CP|=k$. Estimate \eqref{eq_l_bound} follows directly from the Diophantine property. To show \eqref{eq_h_bound}, note that any loop on $\CP$ that is directly attached to the base loop, can be of length at most $\safedist(\maxlevel(k))$. Then one can consider the stack that starts from that loop and apply the induction assumption.
\end{proof}
\begin{rem}
\label{rem_multilog}
In order to obtain a meaningful bound, one needs to choose the parameters to satisfy $C_{\mathrm{safe}}\maxlevel (k)^3\ll k$, in which case $\height(k)$ will be a very slowly growing function. The bound can be achieved either by choosing small $C_{\mathrm{safe}}$ or small $\beta$.
\end{rem}

Recall that, for a fixed $x_0\in \R\setminus(\Z+1/2+\Z^d\cdot\omega)$, we have
$$
V_{\bn}=f(x_0+\bn\cdot\omega).
$$
In order to describe loop translations, it will be convenient to introduce an additional parameter $t$ as follows:
$$
V_{\bn}(t):=\begin{cases}
f(t+x_0+\bn\cdot\omega),&\bn\neq \bze\\
f(x_0),&\bn=\bze.
\end{cases}
$$
so that $V_{\bn}(0)=V_{\bn}$, $V_{\bze}(t)\equiv V_{\bze}$. Denote by $\cont(\CP,t)$ the function obtained by replacing all $V_{\bn}$ in $\cont(\CP)$ with $V_{\bn}(t)$. We will only consider this function in a neighborhood of the origin small enough (cf. \eqref{eq_smallenough}) so that $V_{\bn}(t)\neq V_{\bze}$ for $\bn\neq\bze$, thus avoiding zero denominators. Clearly, $\cont(\CP)=\cont(\CP,0)$. 
Let us also extend the above definition to the equivalence class:
$$
\cont([\CP],t):=\sum\limits_{\CP'\in [\CP]} \cont(\CP',t).
$$
%\item Let $\difflevel(\CP,\mbe)$ be the following quantity. For each loop $\CL$ that is attached to denominator of level $\ge m$, add $\level(\CL)$ if it's $\ge m$, and subtract it if it's $<m$.
% Let also
%$$
%\height(k)=\max\limits_{|\CP|=k}\height(\CP),
%$$
%the maximal possible height of a refined loop configuration of length $k$.

%\item Similarly to the previous definition, let $l_{\mathrm{att}}(\CP)$ be the sum of levels of denominators all attachment points of $\CP$, where each attachment is counted separately. For example, if a denominator $\bn_j$ has three loops attached to it, its level is counted three times.
\begin{lem}
\label{lemma_single_denominator}
%Suppose that $0<\varepsilon^{\delta_{\mathrm{lev}}}\le C_{\reg}$.
Suppose, $f$ is $C_{\mathrm{reg}}$-regular at $x_0$. 
Let $\bn \in \Z^d\setminus\{0\}$, $|t|\le \frac14 \|\bn\cdot\omega\|$.
Then
\begin{enumerate}
	\item $\left|\frac{1}{V_{\bze}-V_{\bn}(t)}\right|\le \max\left\{\frac{4}{D_{\min}(x_0)\beta^{\level(\bn)+1}},1\right\}$;
	\item $\left|\frac{V_{\bn}'(t)}{(V_{\bze}-V_{\bn}(t))^2}\right|\le C_{\reg}\cdot\max\left\{\frac{4}{\beta^{\level(\bn)+1}},D_{\min}(x_0)\right\}\cdot(\text{r.h.s of }(1))$.
\end{enumerate}
\end{lem}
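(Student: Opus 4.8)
The plan is to use $1$-periodicity of $f$ to move the argument $x_0+t+\bn\cdot\omega$ into a controlled position near $x_0$, and then to split into a ``resonantly close'' regime and a ``far'' regime. First I would fix the representative $\theta$ of $\bn\cdot\omega$ in $[-1/2,1/2]$, so that $|\theta|=\|\bn\cdot\omega\|$, and set $s:=t+\theta$; periodicity gives $V_{\bze}-V_{\bn}(t)=f(x_0)-f(x_0+s)$ and $V_{\bn}'(t)=f'(x_0+s)$, while $|t|\le\tfrac14\|\bn\cdot\omega\|$ and $\|\bn\cdot\omega\|\le\tfrac12$ give $|s|\le\tfrac58$ and $\dist(s,\Z)\ge\tfrac34\|\bn\cdot\omega\|$ (from $|s|\ge|\theta|-|t|\ge\tfrac34\|\bn\cdot\omega\|$, and, if $|s|>\tfrac12$, from $1-|s|\ge\tfrac38\ge\tfrac34\|\bn\cdot\omega\|$). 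Writing $y\in(-1/2,1/2)$ for the reduction of $x_0+s$ modulo $1$, one has $f(x_0+s)=f(y)$, $f'(x_0+s)=f'(y)$, and $|y-x_0|\ge\dist(s,\Z)\ge\tfrac34\|\bn\cdot\omega\|$ since $y-x_0\equiv s$ modulo $1$; as in the discussion preceding the lemma I keep the standing assumption that $t$ is small enough that $y\ne\pm\tfrac12$ and $V_{\bn}(t)\ne V_{\bze}$. The split is into (B) $y\in(a_1,b_1)$ and (A) $y\in(-1/2,1/2)\setminus(a_1,b_1)$.

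In case (B) I would use (cr0)--(cr1): $f$ is strictly increasing on $(a,b)\supseteq(a_1,b_1)$ with $f'\ge D_{\min}(x_0)$, and $(a_1,b_1)$ is an interval, so $[x_0,y]\subset(a,b)$ and, integrating $f'$,
$$
|V_{\bze}-V_{\bn}(t)|\ge D_{\min}(x_0)\,|y-x_0|\ge\tfrac34 D_{\min}(x_0)\|\bn\cdot\omega\|\ge\tfrac34 D_{\min}(x_0)\beta^{\level(\bn)+1},
$$
the last step using $\|\bn\cdot\omega\|\ge\beta_{\level(\bn)}=\lfloor\beta^{-\level(\bn)-1}\rfloor^{-1}\ge\beta^{\level(\bn)+1}$; reciprocating and using $\tfrac43\le4$ gives (1). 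For (2), additionally $|V_{\bn}'(t)|=|f'(y)|\le C_{\reg}D_{\min}(x_0)$ by (cr1), so the left side of (2) is at most $C_{\reg}D_{\min}(x_0)\cdot(\text{r.h.s. of }(1))^2$, and since $D_{\min}(x_0)\cdot(\text{r.h.s. of }(1))=\max\{4\beta^{-\level(\bn)-1},D_{\min}(x_0)\}$ this is precisely the claimed bound.

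In case (A), since $f|_{(a,b)}$ is one-to-one one has $f^{-1}\bigl((f(x_0)-1,f(x_0)+1)\bigr)\cap(-1/2,1/2)=(a_1,b_1)$, so $y\notin(a_1,b_1)$ forces $|V_{\bze}-V_{\bn}(t)|=|f(x_0)-f(y)|\ge1$, which yields (1) through the ``$1$'' branch of the maximum. For (2) I would invoke (cr2): $y$ lies, modulo $1$, in the interval $(b_1,a_1+1)$ of (cr2), where $g(x)=(f(x_0)-f(x))^{-1}$ obeys $|g'|\le C_{\reg}D_{\min}(x_0)$; since $\frac{V_{\bn}'(t)}{(V_{\bze}-V_{\bn}(t))^2}=\frac{f'(y)}{(f(x_0)-f(y))^2}=g'(y)$, and since $(\text{r.h.s. of }(1))\ge1$ always, the left side of (2) equals $|g'(y)|\le C_{\reg}D_{\min}(x_0)\le C_{\reg}\max\{4\beta^{-\level(\bn)-1},D_{\min}(x_0)\}\cdot(\text{r.h.s. of }(1))$. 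At points where $f'$ does not exist the same estimates run verbatim with the derivative numbers, per the conventions in (cr1)--(cr2). This lemma is essentially routine; its only real content is recognizing that (cr2) is tailored precisely so that $g'=f'/(f(x_0)-f)^2$ has the right bound in the far region, which makes case (A) of part (2) immediate, and the only mildly delicate bookkeeping point is that case (B) still needs $|y-x_0|\gtrsim\|\bn\cdot\omega\|$ even when $x_0+s$ wraps past $\pm\tfrac12$, which is handled by the inequality $|y-x_0|\ge\dist(s,\Z)\ge\tfrac34\|\bn\cdot\omega\|$ noted above rather than by any genuine obstacle.
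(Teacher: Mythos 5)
Your proposal is correct and takes essentially the same approach as the paper: split into a near region (where $(\mathrm{cr}1)$ gives $|f(x_0)-f(y)|\ge D_{\min}|y-x_0|\ge\tfrac34 D_{\min}\|\bn\cdot\omega\|$ and $|f'(y)|\le C_{\reg}D_{\min}$) and a far region (where the denominator is $\ge 1$ and $(\mathrm{cr}2)$ controls $g'$). The only cosmetic difference is that you place the cut at $(a_1,b_1)$, matching the domain of $g$ in $(\mathrm{cr}2)$ exactly, while the paper cuts at $(a,b)$ and handles the intermediate strip via $(\mathrm{cr}1)$; both give the stated bounds.
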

\begin{proof}
First, let us note that, if $\{t+\bn\cdot\omega\}$ is large enough so that 
$$
x_0+t+\bn\cdot\omega\notin (a,b):=f^{-1}((f(x_0)-2,f(x_0)+2))\cap (-1/2,1/2),
$$
then one can use (cr0) and bound the denominator by $1$ in (1) and use (cr2) in (2). Otherwise, one can use the lower bound in (cr1) \eqref{eq_cr1} combined with $\|t+\bn\cdot\omega\|\ge \frac34 \|\bn\cdot\omega\|$.
\end{proof}
%\begin{rem}
%Suppose that $\varepsilon^{\dlev}\le \frac{D_{\min}}{4\gamma}$. Then all small denominators will have $\max\{\cdot,\cdot\}$ in Lemma \ref{lemma_single_denominator} attained on the first argument, and all non-small denominators will have it attained on the second argument.
%\end{rem}
If $D_{\min}(x_0)$ is large, some denominators of small levels may actually be not very small. It is convenient to introduce an extra parameter which will indicate the minimal level at which a denominator can in principle be smaller than $1$. Denote by
\bee
\label{eq_m_definition}
M_{\beta}=\frac{\log (4/D_{\min}(x_0))}{\log\beta}-1,
\ene
that is,
$$
\beta^{M_{\beta}+1}=\frac{4}{D_{\min}(x_0)}.
$$
In the sequel, it will be convenient to use the notation $\bn\in \CL$ if the loop $\CL$ visits a lattice point $\bn$.
\begin{lem}
\label{lemma_safe_loop_contrib}
Let $\CL$ be an eigenvalue/eigenvector loop. Suppose, $f$ is $C_{\mathrm{reg}}$-regular at $x_0$ and 
\bee
\label{eq_smallenough}
\quad |t|\le \frac14 \min_{\bn\in \CL}\|\bn\cdot\omega\|.
\ene
%Denote
%$$
%M=\frac{\log(4\gamma)-\log D_{\min}}{\log\varepsilon}-\dlev.
%$$
Define $M_{\beta}$ by $\eqref{eq_m_definition}$.
Then
\begin{enumerate}
	\item $|\cont(\CL,t)|\le \|\varphi\|_{\infty}^{|\CL|} \left(\frac{4}{D_{\min}(x_0)}\right)^{\denominators(\CL,M_{\beta})} \beta^{-\totallevel(\CL,\mbe)-\denominators(\CL,\mbe)}$.
	\item The function $\cont(\CL,\cdot)$ is Lipschitz continuous:
$$
\left|\frac{d}{dt}\cont({\CL,t})\right|\le|\CL|\cdot C_{\reg}\max\left\{\frac{4}{\beta^{\level(\CL)+1}},D_{\min}(x_0)\right\}\cdot(\text{r.h.s of }(1)).
$$
\end{enumerate}
\end{lem}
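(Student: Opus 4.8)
The plan is to unfold $\cont(\CL,t)$ into an explicit product of edge weights, estimate it factor by factor using \lemref{lemma_single_denominator}, and then deduce the Lipschitz bound by differentiating the same product. First I would write $\CL$ in the symbolic notation of Section~3.2 and read off that $\cont(\CL,t)$ is a product of exactly $|\CL|$ edge weights. Each weight carries a hopping numerator $\Phi_{**}$, of modulus at most $\|\varphi\|_\infty$, which altogether contributes the factor $\|\varphi\|_\infty^{|\CL|}$; in addition, every edge that arrives at a nonzero vertex $\bn'$ carries a denominator $(V_{\bze}-V_{\bn'}(t))^{-1}$, and since a loop has no descending edges, these denominator factors are indexed precisely by the visits to nonzero vertices counted by $\denominators(\CL,\cdot)$ and $\totallevel(\CL,\cdot)$. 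The hypothesis \eqref{eq_smallenough} guarantees $|t|\le\frac14\|\bn'\cdot\omega\|$ for every $\bn'\in\CL$, so \lemref{lemma_single_denominator}(1) applies to each denominator factor and bounds it by $\max\{4/(D_{\min}(x_0)\beta^{\level(\bn')+1}),1\}$.

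For part~(1), I would then substitute $\beta^{\mbe+1}=4/D_{\min}(x_0)$ from the definition \eqref{eq_m_definition} of $\mbe$, so that the bound on a single denominator becomes $\beta^{\mbe-\level(\bn')}$; this is $\le 1$ exactly when $\level(\bn')<\mbe$, and equals $(4/D_{\min}(x_0))\,\beta^{-\level(\bn')-1}$ when $\level(\bn')\ge\mbe$. Hence only the denominators of level $\ge\mbe$ contribute, and multiplying their bounds over the corresponding visits produces precisely $(4/D_{\min}(x_0))^{\denominators(\CL,\mbe)}\,\beta^{-\totallevel(\CL,\mbe)-\denominators(\CL,\mbe)}$; combining this with $\|\varphi\|_\infty^{|\CL|}$ yields the asserted bound.

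For part~(2), I would use that $V_{\bze}(t)\equiv V_{\bze}$ is constant in $t$, so among the $|\CL|$ edge weights only the denominator factors depend on $t$, and differentiating one of them replaces $(V_{\bze}-V_{\bn'}(t))^{-1}$ by $V_{\bn'}'(t)(V_{\bze}-V_{\bn'}(t))^{-2}$. By the product rule, $\tfrac{d}{dt}\cont(\CL,t)$ is a sum of at most $|\CL|$ terms, each obtained from the product defining $\cont(\CL,t)$ by replacing one denominator factor by its derivative. In such a term, \lemref{lemma_single_denominator}(2) bounds the modified factor by $\|\varphi\|_\infty\cdot C_{\reg}\max\{4\beta^{-\level(\bn')-1},D_{\min}(x_0)\}$ times the bound \lemref{lemma_single_denominator}(1) gives for the original factor, while the remaining factors are estimated exactly as in the proof of part~(1). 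Collecting everything, each term is bounded by $C_{\reg}\max\{4\beta^{-\level(\bn')-1},D_{\min}(x_0)\}$ times the right-hand side of~(1); since $\level(\bn')\le\level(\CL)$ for every visited $\bn'$, one may replace $\level(\bn')$ by $\level(\CL)$, and summing the at most $|\CL|$ terms gives the claimed Lipschitz bound.

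All the estimates are elementary once the product structure is written out. I expect the only real care to be in the bookkeeping of part~(1) — turning the $\max\{\,\cdot\,,1\}$ bounds of \lemref{lemma_single_denominator} into the clean exponents $\totallevel(\CL,\mbe)$ and $\denominators(\CL,\mbe)$, and checking that the denominator factors of $\cont(\CL,t)$ are counted by $\denominators(\CL,0)$ (here it is essential that a loop has no descending edges) — and, correspondingly in part~(2), that differentiation only ever hits these denominator factors because $V_{\bze}$ is $t$-independent.
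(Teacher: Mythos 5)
Your proof is correct and is essentially the paper's argument made explicit: the paper dismisses Lemma~\ref{lemma_safe_loop_contrib} with the single sentence that it ``directly follows from Lemma~\ref{lemma_single_denominator} and counting the contributions from each denominator,'' and your write-up is exactly that bookkeeping spelled out — the factor-by-factor application of Lemma~\ref{lemma_single_denominator}(1) via the identity $\beta^{\mbe+1}=4/D_{\min}(x_0)$ for part~(1), and the product rule plus Lemma~\ref{lemma_single_denominator}(2) with $\level(\bn')\le\level(\CL)$ for part~(2). The details you flag as requiring care (the role of $\mbe$ in splitting into the two arguments of the $\max$, the observation that a loop has no descending edges so $\denominators(\CL,0)$ counts exactly the denominators present, and that $V_{\bze}(t)$ is $t$-independent so the product rule only hits denominator factors) are indeed the only places where something nontrivial is used, and you handle them correctly.
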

\begin{proof}
Both estimates directly follow from Lemma \ref{lemma_single_denominator} and counting the contributions from each denominator.
\end{proof}
\begin{rem}
The previous lemma does not assume $\CL$ to be safe. If it is safe, then one can estimate the number of denominators of level $\ge M\ge 1$ on $\CL$ by $\lceil c(\dist_{\varphi})\frac{|\CL|}{M^3 C_{\mathrm{safe}}}\rceil$. As a consequence, 
$
\totallevel(\CL,0)\le \frac{C_{\dist}}{C_{\safe}}|\CL|.$ Here, $C_{\dist}$ denotes a constant that only depends on the distance function.
\end{rem}

The following theorem is the main technical result of the section. It establishes a bound on the class of equivalence of a single loop stack, thus providing a bound on each $\cont([\CP_s])$ in \eqref{eq_end_sec2}.
\begin{thm}
\label{th_loop_stack}
Let $\CP$ be an eigenvalue/eigenvector loop stack. Suppose, $f$ is $C_{\mathrm{reg}}$-regular at $x_0$ and $\CL$ is the base loop of $\CP$. Define $\mbe$ by $\eqref{eq_m_definition}$. Let
$$
|t|\le \frac14 \min_{\bn\in \CL}\|\bn\cdot\omega\|,\quad 0<\beta< \beta_{\max}(\dist_{\varphi},C_{\dio},\tau,C_{\safe}).
$$
Then
\begin{enumerate}
	\item $$|\cont([\CP],t)|\le (C_{\dist}\|\varphi\|_{\infty})^{|\CP|} 
	\left(\frac{4}{D_{\min}(x_0)}\right)^{\denominators(\CP,\mbe)} 
\beta^{-\totallevel(\CP,\mbe)-\denominators(\CP,\mbe)}\times
$$
$$
\times C_{\reg}^{\downedges(\CP,\mbe)}\left(\frac{4}{D_{\min}(x_0)}\right)^{\nbloops(\CP,\mbe)}\beta^{-\nblevel(\CP,\mbe)-\nbloops(\CP,\mbe)}.
$$
\item The function $\cont(\CP,\cdot)$ is Lipschitz continuous with the bound on the derivative
$$\left|\frac{d}{dt}\cont([\CP],t)\right|\le C_{\reg}\cdot \max\left\{\frac{4}{\beta^{\level(\CL)+1}},D_{\min}(x_0)\right\}\cdot(\text{r.h.s of }(1)).
$$
\end{enumerate}
\end{thm}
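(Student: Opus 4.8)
The plan is to prove statements (1) and (2) simultaneously by induction on the height $\height(\CP)$, driven by an explicit product formula for $\cont([\CP],t)$ that exposes the cancellation one sheet at a time.

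\emph{Step 1: the product formula.} Let $\CL$ be the base loop of $\CP$. In a loop stack every non-base loop is short and attached at a small denominator, so the loops attached directly to $\CL$ are the base loops of sub-stacks $\CP^{(1)},\dots,\CP^{(r)}$, where $\CP^{(i)}$ is attached at a small denominator $\bn_i$ and $\height(\CP^{(i)})<\height(\CP)$. Since $T(\CP)=\CP$ for a loop stack, the $2^{s}$ elements of $[\CP]$ are obtained by independently marking or not marking each short loop; the unmarked copy of $\CP^{(i)}$ contributes, as a multiplicative factor, $-(V_{\bze}-V_{\bn_i}(t))^{-1}\cont([\CP^{(i)}],t)$, while the marked copy (its translate by $\bn_i$, dropped onto the base sheet) contributes $(V_{\bze}-V_{\bn_i}(t))^{-1}\cont([\CP^{(i)}],t+\bn_i\cdot\omega)$ — the parameter shift implementing the translation because $V_{\bk}(t+\bn_i\cdot\omega)=V_{\bn_i+\bk}(t)$, and because $\cont([\CP^{(i)}],\cdot)$ already sums over the inner marked/unmarked choices of $\CP^{(i)}$. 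Using the multiplicativity of $\cont$ under attachment (cf.\ \eqref{eq_nonbase_weight}), summing the two choices and multiplying over $i$ gives
\begin{equation*}
\cont([\CP],t)=\cont(\CL,t)\prod_{i=1}^{r}\frac{\cont([\CP^{(i)}],t+\bn_i\cdot\omega)-\cont([\CP^{(i)}],t)}{V_{\bze}-V_{\bn_i}(t)} .
\end{equation*}
Each factor is a \emph{difference}, not a sum, of values of $\cont([\CP^{(i)}],\cdot)$; this is the source of all the gain.

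\emph{Step 2: the induction for (1).} The base case $\height(\CP)=0$, where $\CP=\CL$ is a safe loop, $r=0$, and all non-base counts vanish, is exactly Lemma~\ref{lemma_safe_loop_contrib} (assuming $C_{\dist}\ge 1$). For the step, write each bracket as $\int_{0}^{\bn_i\cdot\omega}\frac{d}{ds}\cont([\CP^{(i)}],s)\,ds$, reduce $\bn_i\cdot\omega$ modulo $1$ so the integration length is $\|\bn_i\cdot\omega\|$, and bound the integrand by the induction hypothesis (2) for $\CP^{(i)}$; then $|\text{bracket}_i|\le\frac{\|\bn_i\cdot\omega\|}{|V_{\bze}-V_{\bn_i}(t)|}\sup_s\big|\frac{d}{ds}\cont([\CP^{(i)}],s)\big|$, and by (cr1) together with $\|t+\bn_i\cdot\omega\|\ge\frac34\|\bn_i\cdot\omega\|$ one has $\frac{\|\bn_i\cdot\omega\|}{|V_{\bze}-V_{\bn_i}(t)|}\le\frac{4}{3D_{\min}(x_0)}$, so the pole at the descending edge into $\bn_i$ is traded for a single factor $\sim D_{\min}^{-1}$ — exactly the ``power drops from $k$ to $1$'' mechanism of Remark~\ref{rem_summary3}, now applied recursively. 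Combining this with Lemma~\ref{lemma_safe_loop_contrib}(1) for $\cont(\CL,t)$ and the additivity of $\den$, $\totallevel$, etc.\ along the decomposition $\CP=\CL\cup\bigcup_i\CP^{(i)}$ yields (1): $\cont(\CL,t)$ supplies the base-loop exponents, and each bracket supplies the $\CP^{(i)}$ exponents of the first line together with, for the second line, the $C_{\reg}$ (coming from Lemma~\ref{lemma_single_denominator}(2) inside hypothesis (2)) counted by $\downedges$, the traded $D_{\min}^{-1}$ counted by $\nbloops$, and the $\beta^{-\level(\CL^{(i)})-1}$ counted by $\nblevel$. For attachment points of level $<\mbe$ one instead uses the crude estimate $|\text{bracket}_i|\lesssim\sup_s|\cont([\CP^{(i)}],s)|$ (there is no dangerous pole to fight) and absorbs the constants into $(C_{\dist}\|\varphi\|_\infty)^{|\CP|}$.

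\emph{Step 3: the Lipschitz bound and the main difficulty.} For (2) one differentiates the product formula termwise: the term differentiating $\cont(\CL,\cdot)$ is handled by Lemma~\ref{lemma_safe_loop_contrib}(2), producing the factor $C_{\reg}\max\{4\beta^{-\level(\CL)-1},D_{\min}\}$; a term differentiating a bracket is estimated by the quotient rule, using Lemma~\ref{lemma_single_denominator}(2) for $\frac{d}{dt}(V_{\bze}-V_{\bn_i}(t))^{-1}$ and hypotheses (1)--(2) for $\CP^{(i)}$, and produces an extra factor $\lesssim C_{\reg}\max\{4\beta^{-\level(\bn_i)-1},D_{\min}\}$ relative to bracket $i$ in (1), which is controlled by $\max\{4\beta^{-\level(\CL)-1},D_{\min}\}$ since $\bn_i\in\CL$ forces $\level(\bn_i)\le\level(\CL)$. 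The hard part throughout is the bookkeeping in Step~2: verifying that the six counting functions split additively along $\CP=\CL\cup\bigcup_i\CP^{(i)}$ in precisely the way needed to reproduce the two displayed lines of (1), and organizing the estimate around the threshold $\mbe$ so that the mean-value trade is invoked only on denominators of level $\ge\mbe$. A closely related subtlety is keeping $t$ inside the admissible window $|t|\le\frac14\min_{\bn\in\CL}\|\bn\cdot\omega\|$ while recursing into $\CP^{(i)}$ with the shifted parameter $t+\bn_i\cdot\omega$: here condition (c2) is indispensable, forcing every coordinate on the short loop $\CL^{(i)}$ to have level strictly below $\level(\bn_i)$, and, combined with the growth $\safedist\sim\level^3$ (which rules out two nested denominators of nearly equal level except at the lowest few levels, disposed of by crude bounds) and a sufficiently small $\beta$, it guarantees that $\|\bn_i\cdot\omega\|$ is small enough against $\min_{\bn\in\CL^{(i)}}\|\bn\cdot\omega\|$ for hypothesis (2) on $\CP^{(i)}$ to apply on the shifted interval.
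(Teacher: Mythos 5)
Your argument follows the paper's proof essentially verbatim: the same factorization $\cont([\CP],t)=\cont(\CL,t)\prod_i(V_{\bze}-V_{\bn_i}(t))^{-1}\bigl(\cont([\CP^{(i)}],t+\bn_i\cdot\omega)-\cont([\CP^{(i)}],t)\bigr)$, the same simultaneous induction on $\height(\CP)$ anchored in Lemma~\ref{lemma_safe_loop_contrib}, the same mean-value trade of the bracket against the pole $(V_{\bze}-V_{\bn_i}(t))^{-1}$ using the inductive Lipschitz bound, the same split at level $\mbe$ between crude and cancelled estimates, and the same use of the level gap $\level(\CL^{(i)})\ll\level(\bn_i)$ (your attribution of this gap to (c2) alone is slightly imprecise — one needs (c1) for the strict inequality and the cubic growth of $\safedist$ for the stronger gap needed in \eqref{eq_appropriate_bounds} — but you correctly invoke those as well). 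The proposal is correct and matches the paper's approach.
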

\begin{proof}
Before starting the proof, we have a few remarks on the structure of the terms. The first line in the right hand side of (1) is the ``safe'' part of the contribution, similarly to (1) in Lemma \ref{lemma_safe_loop_contrib}. This is what the estimate would be like if we ignored all extra factors appearing in the attachment points. As in the example following Definition \ref{def_trans_eq}, the extra attachment factors will be cancelled with small numerators that appear as the result of subtracting a loop contribution term and its translation. Each such cancellation will introduce a derivative factor, similar to the extra factor (2) in Lemma \ref{lemma_safe_loop_contrib}. We only need to do this if the attachment denominator $V_{\bn_j}(t)-V_{\bze}$ is small enough; that is, $\level(\bn_j)\ge \mbe$ or, equivalently, that we cannot bound $|V_{\bn_j}(t)-V_{\bze}|$ by $1$ from below. The number of such attachments is equal to $\downedges(\CP,\mbe)$. For each of these attachments, we apply the induction assumption (2), which ultimately reduces to part (2) of Lemma \ref{lemma_safe_loop_contrib}. In this case, there are two possibilities: the level of the attached loop is still $\ge \mbe$, or it becomes less than $\mbe$ (note that it must be smaller than $\level(\bn_j)$ due to the shortness condition in the loop stack structure). In both cases, we get $C_{\reg}$ in the derivative bound, but the rest will depend on which argument of the $\max$ function is bigger. The number of times we use the first factor is, ultimately, controlled by $\nblevel$ and $\nbloops$.

An important observation is that, in estimating the derivative factor, one can always use the loop at the lowest level, since it will always contain the smallest denominator. The contribution from differentiating other loops is additive, and can hence be absorbed into a combinatorial factor $C_{\dist}^{|\CP|}$, where $C_{\dist}$ only depends on the distance function.

Both claims are proved by induction in $\height(\CP)$ at the same time. The case $\height(\CP)=0$ is contained in Lemma \ref{lemma_safe_loop_contrib}. Any $\CP$ satisfying the assumptions of the theorem can be split into the base loop $\CL$ (which is eigenvalue/eigenvector loop, depending on the type of $\CP$) and several eigenvalue loop stacks $\CP_j$, where $\CP_j$ is attached to a vertex $v_j$ on $\CL$ with coordinate $\bn_j$, which is a small denominator. Then, since the loop translation operation is equivalent to replacing $t$ by $t+\bn_j\cdot\omega$, we have
\bee
\label{eq_prod}
\cont([\CP],t)=\cont(\CL,t)\prod_j(V_{\bze}-V_{\bn_j}(t))^{-1}(\cont([\CP_j],t+\bn_j\cdot\omega)-\cont([\CP_j],t)).
\ene
Clearly, 
$$
\level(\CP,\mbe)=\level(\CL,\mbe)+\sum_j \level(\CP_j,\mbe),\quad |\CP|=|\CL|+\sum_j(1+|\CP_j|),
$$
$$
\loops(\CP)=\loops(\CL)+\sum_j \loops(\CP_j),
$$
$$
\den(\CP,\mbe)=\den(\CL,\mbe)+\sum_j \den(\CP_j,\mbe).
$$
$$
\totallevel(\CP,\mbe)=\totallevel(\CL,\mbe)+\sum_j \totallevel(\CP_j,\mbe).
$$
For each $\bn_j$, calculate the estimate of $|V_{\bze}(x)-V_{\bn_j}(x)|^{-1}$ provided by Lemma \ref{lemma_single_denominator}. The cases will be based on which of the arguments of the $\max$ function in that lemma is larger. If the lemma provides $|V_{\bze}-V_{\bn_j}(t)|^{-1}\le 1$ (the second argument of $\max$), use the direct estimate
\bee
\label{eq_simple_induction}
|\cont([\CP_j],t+\bn_j\cdot\omega)-\cont([\CP_j],t)|\le |\cont([\CP_j],t+\bn_j\cdot\omega)|+|\cont([\CP_j],t)|
\ene
and the induction assumption (1) (we will verify the necessary prerequisite shortly). This means that $\beta$ is small enough so that the denominator $(V_{\bze}-V_{\bn_j}(t))^{-1}$ does not need to be treated as small, and no derivative factors appear. Each time the estimate is applied, we gain the bound from the previous induction step and an extra factor of $2$, which can be absorbed into the combinatorial factor.

In the case $|V_{\bze}-V_{\bn_j}(t)|$ is smaller (meaning that the best that Lemma \ref{lemma_single_denominator} can provide is $|V_{\bze}-V_{\bn_j}(t)|^{-1}\le B$ with some $B>1$, using the first argument of the $\max$ function), we apply the induction assumption (2) and estimate $|\cont([\CP_j],t+\bn_j\cdot\omega)-\cont([\CP_j],t)|$ using the derivative of $\CP_j$.

Let us verify the prerequisites for applying the induction assumptions on the range of $t$. Since $|t|\le \frac14 \|\bn_j\cdot\omega\|$, the assumption would follow from
%\bee
%\label{eq_appropriate_bounds}
%|x|\le \frac{\varepsilon^{\level(\CL_j)+\delta_{\mathrm{lev}}}}{8\gamma},\quad \|\bn_j\cdot\omega\|\le \frac{\varepsilon^{\level(\CL_j)+\delta_{\mathrm{lev}}}}{8\gamma},
%\ene
\bee
\label{eq_appropriate_bounds}
\|\bn_j\cdot\omega\|\le \frac18 \min_{\bn\in \CL_j}\|\bn\cdot\omega\|,
\ene
where $\CL_j$ is the base loop of $\CP_j$. Recall that the denominator leveling constructed in Theorem \ref{th_consistent_diophantine} implies
\bee
\label{eq_appropriate_2}
\|\bn_j\cdot\omega\|\le  2\beta^{\level(\bn_j)},\quad \|\bn\cdot\omega\|\ge  \beta^{\level(\CL_j)+1},
\ene
where $\bn$ is from the right hand side of \eqref{eq_appropriate_bounds}. In the special case $\level(\bn_j)=1$, we have $|\CL_j|<C_{\safe}$, and the second inequality can be improved to
$$
\|\bn\cdot\omega\|\ge C_{\dio}|C_{\dist}C_{\safe}|^{-\tau}.
$$
Thus, one can achieve \eqref{eq_appropriate_bounds} by choosing a small $\beta$ (depending on $C_{\safe}$) and applying the first inequality in \eqref{eq_appropriate_2}.

Let us now consider the case $\level(\bn_j)\ge 2$. In this case, one can reduce \eqref{eq_appropriate_bounds} to
$$
\beta^{\level(\bn_j)-1}\le \frac{1}{16} \beta^{\level(\CL_j)}.
$$
Recall that $\CP$ is a loop stack, and therefore $\CL_j$ is a short loop: $|\CL_j|<\safedist(\bn_j)$. The bound \eqref{eq_l_bound} implies
\bee
\label{eq_h2_bound}
\level(\CL_j)\le \maxlevel(|\CL_j|)\le \maxlevel(\safedist(\bn_j))\ll \level(\bn_j),
\ene
assuming again that $\beta_{\max}$ is small enough (depending on $C_{\safe}$ and other parameters specified in the statement). Thus, in both cases we have \eqref{eq_appropriate_bounds}. If we are applying the induction assumption (2) to each $\CP_j$, we use the bound
\bee
\label{eq_attachment_cancellation}
|(V_{\bze}-V_{\bn_j}(t))^{-1}|\le \frac{1}{D_{\min}\|\bn_j\cdot\omega\|}.
\ene
The difference $|\cont([\CP_j],t+\bn_j\cdot\omega)-\cont([\CP_j],t)|$ can be estimated by the bound (2) on the derivative, times $\|\bn_j\cdot\omega\|$. As a consequence, we have
\bee
\label{eq_differentiation_factor}
\left|\frac{\cont([\CP_j],t+\bn_j\cdot\omega)-\cont([\CP_j],t)}{V_{\bze}-V_{\bn_j}(t)}\right|\le C_{\reg}\max\left\{\frac{4}{D_{\min}\beta^{\level(\CL_j)+1}},1\right\}\cdot (1)_j,
\ene
where $(1)_j$ denotes the right hand side of (1) for the induction assumption for $\CP_j$, and $\CL_j$ is the base loop of $\CP_j$; we have also cancelled $\|\bn_j\cdot\omega\|$. We see that we obtain $C_{\reg}$ each time the differentiation happens, and we do not gain $D_{\min}$ in the numerator. The latter is important since, in the high energy region, $D_{\min}$ can be very large.

To complete the proof of (1), note that the factor \eqref{eq_differentiation_factor} appears $\downedges(\CP,\mbe)$ times. Out of these, the first argument in the $\max$ function is chosen $\nbloops(\CP,\mbe)$ times, and the second argument appears $\downedges(\CP,\mbe)-\nbloops(\CP,\mbe)$ times.

To show (2), consider the derivative of \eqref{eq_prod}, and suppose that the product runs over $j=1,\ldots,m$. As a result of differentiating the product, we obtain $2m+1$ terms. The estimate from differentiating $\cont(\CL,t)$ or $(V_{\bze}-V_{\bn_j}(t))^{-1}$ follows from Lemma \ref{lemma_safe_loop_contrib} or Lemma \ref{lemma_single_denominator}, plus the induction assumption (1) for the remaining factors combined with \eqref{eq_simple_induction}. 

In the case of differentiating the last factor $\cont([\CP_j],t+\bn_j\cdot\omega)-\cont([\CP_j],t)$ in \eqref{eq_prod}, we estimate the derivatives of both terms by absolute value:
$$
\left|\frac{d}{dt}\cont([\CP_j],t+\bn_j\cdot\omega)-\frac{d}{dt}\cont([\CP_j],t)\right|\le \left|\frac{d}{dt}\cont([\CP_j],t+\bn_j\cdot\omega)\right|+\left|\frac{d}{dt}\cont([\CP_j],t)\right|.
$$
Each time we do the last operation, we gain a factor of $2$. Ultimately, the power of 2 is equal to the number of attachment points and can therefore be absorbed into $16^k$. We also get a factor $2m+1<5k$ each time we apply the induction. The total contribution from the last factors is bounded by $(5k)^{\height(k)}$, which can ultimately also be absorbed into $16^k$ (recall that $\height(k)$ is an extremely slowly growing function).

\end{proof}
\subsection{The main result}
Suppose that $\CP$ is a loop stack, $|\CP|=k$. Then we have the following:
$$
\den(\CP,\mbe)\le \frac{k}{\max\{1,C_{\dist}C_{\safe}\mbe^2\}};
$$
$$
\nbloops(\CP,\mbe)\le \loops(\CP,\mbe)\le \frac{k}{2\exp\{c(\dist_{\varphi},C_{\dio},\tau)\max\{\mbe,0\}\}};
$$
$$
\nblevel(\CP,\mbe)\le \totallevel(\CP,\mbe)\le \frac{C_{\dist}k}{C_{\safe}\max\{\mbe,1\}}.
$$
$$
\downedges(\CP,\mbe)\le k/2.
$$
Here, as above, $C_{\dist}$ denotes some constant that only depends on the distance function and can be different from one estimate to another. Note that the bound on $\downedges(\CP,\mbe)$ cannot be improved much, since once the base loop of $\CP$ reaches a small denominator of level $M$, we can attach an unlimited amount of shortest possible loops (that is, of length $2$) to that denominator. Recall now the main bound from Theorem \ref{th_loop_stack}
$$|\cont([\CP],t)|\le (C_{\dist}\|\varphi\|_{\infty})^k 
	\left(\frac{4}{D_{\min}}\right)^{\denominators(\CP,\mbe)} 
\beta^{-\totallevel(\CP,\mbe)-\denominators(\CP,\mbe)}\times
$$
$$
\times C_{\reg}^{\downedges(\CP,\mbe)}\left(\frac{4}{D_{\min}}\right)^{\nbloops(\CP,\mbe)}\beta^{-\nblevel(\CP,\mbe)-\nbloops(\CP,\mbe)}.
$$
Combining it with the previous estimates, we obtain (assuming additionally $D_{\min}\ge 1$)
$$
|\cont([\CP],t)|\le C_{\reg}^{k/2}(C_{\dist}\|\varphi\|_{\infty})^k\beta^{-\frac{3k}{2}\left(1+\frac{C_{\dist}}{C_{\safe}}\right)}.
$$
Recall that these bounds rely on $\beta$ being sufficiently small in order to satisfy the consistency condition $\eqref{eq_epsilon_safe}$:
$$
0<\beta<\beta_{\max}(C_{\mathrm{safe}},C_{\mathrm{dio}},\dist_{\varphi},\tau)
$$
and a bound of the form
$$
C_{\mathrm{safe}}\maxlevel(k)^3\ll k,
$$
which is required for \eqref{eq_h_bound} and \eqref{eq_h2_bound} and is possible due to \eqref{eq_l_bound} (see also Remark \ref{rem_multilog}). If $M_{\beta}>0$, then all the bounds only become better. Note that $D_{\min}$ can be arbitrarily large, and it is important that it is always in the denominator. We arrive at the following main result.
\begin{thm}
\label{main}
Suppose, $f$ is $C_{\mathrm{reg}}$-regular at $x_0\in (-1/2,1/2)\setminus(\Z+1/2+\Z^d\cdot\omega)$ and $D_{\min}\ge 1$. Consider the operator \eqref{eq_h_def}:
$$
(H(x_0)\psi)_{\bn}=\varepsilon\sum_{\bm\in \Z^d}\varphi_{\bn-\bm}\psi_{\bm}+f(x_0+\bn\cdot\omega)\psi_{\bn}.
$$
Let $V_{\bn}=f(x_0+\bn\cdot\omega)$, and consider the perturbation series
\bee
\label{eq_lak_mainth}
\lambda_0+\varepsilon\lambda_1+\varepsilon^2\lambda_2+\ldots,
\ene
$$
\psi=\psi_0+\varepsilon \psi_1+\varepsilon^2\psi_2+\ldots
$$
with $\lambda_0=f(x)$ and $\psi_0=e_{\bze}$, constructed in Theorem $\ref{th_lak}$. There exists $\gamma=\gamma(\tau,C_{\dio},\dist_{\varphi})$ such that the the terms of the eigenvalue series satisfy
\bee
\label{eq_lak_bound}
|\lambda_k|\le C_{\reg}^{k/2} (C_{\dist}\|\varphi\|_{\infty})^k\gamma^{-k}.
\ene
Suppose, in addition, that $\varepsilon<C_{\reg}^{-1/2}(C_{\dist}\|\varphi\|_{\infty})^{-1}\gamma$, so that the series converges. Then
\bee
\label{uniform_localization}
|\psi_{\bn}|=|\langle e_{\bn},\psi\rangle|\le \left(\frac{C_{\dist} \|\varphi\|_{\infty}\sqrt{C_{\reg}}}{\gamma}\right)^{\dist_{\varphi}(\bn,\bze)}\varepsilon^{\dist_{\varphi}(\bn,\bze)},
\ene
so that $\psi\in \ell^2(\Z^d)$ and it satisfies the eigenvalue equation
$$
H(x_0)\psi=\lambda\psi.
$$
\end{thm}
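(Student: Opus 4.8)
The plan is to feed the loop-stack estimate of Theorem~\ref{th_loop_stack} into the path expansion of Theorem~\ref{th_lak}, after grouping paths into the translational equivalence classes of Definition~\ref{def_trans_eq}. First I would fix the consistent denominator data of Theorem~\ref{th_consistent_diophantine} with $C_{\safe}$ and $\beta$ chosen small enough that \emph{all} the smallness requirements accumulated in Section~5.3 hold at once: both the consistency bound \eqref{eq_epsilon_safe} and $C_{\safe}\,\maxlevel(k)^3\ll k$, which is permissible by \eqref{eq_l_bound}. Set $\mbe$ by \eqref{eq_m_definition}; since $D_{\min}\ge1$ this only sharpens the later bounds. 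By Theorem~\ref{th_lak}, $\lambda_k=\sum_{\CP\colon|\CP|=k}\cont(\CP)$ over eigenvalue paths and $(\psi_k)_{\bn}=\sum_{\CP\colon|\CP|=k}\cont(\CP)$ over eigenvector paths from $\bze$ to $\bn$; since all elements of a class $[\CP]$ share the same length, these equal $\sum_{[\CP]\colon|\CP|=k}\cont([\CP])$, each class counted once.

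The heart of the argument is a uniform bound on one $\cont([\CP])$. I would pass to the canonical representative $T(\CP)$, decompose it into maximal loop stacks $\CP_0,\CP_1,\dots,\CP_n$ by the greedy algorithm of Section~3 so that \eqref{eq_end_sec2} holds, and apply Theorem~\ref{th_loop_stack}(1) at $t=0$ to bound every factor in \eqref{eq_end_sec2}. Substituting the combinatorial bounds on $\den,\nbloops,\nblevel,\downedges,\totallevel$ displayed just before the statement into that estimate, and using $D_{\min}\ge1$ to discard the $(4/D_{\min})$-powers, gives $|\cont([\CP_s])|\le(C_{\dist}\|\varphi\|_\infty)^{|\CP_s|}C_{\reg}^{|\CP_s|/2}\beta^{-\frac32(1+C_{\dist}/C_{\safe})|\CP_s|}$. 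Each attachment vertex $\bn_s$ is a small denominator sitting on the \emph{non-short} base loop of $\CP_s$, so $|\CP_s|\ge\safedist(\bn_s)=\lceil C_{\safe}\level(\bn_s)^3\rceil$, whence $\level(\bn_s)\le(|\CP_s|/C_{\safe})^{1/3}$; by Lemma~\ref{lemma_single_denominator}(1) and $D_{\min}\ge1$ the attachment factor obeys $|(V_{\bn_s}-V_{\bze})^{-1}|\le4\beta^{-(|\CP_s|/C_{\safe})^{1/3}-1}$, which is subexponential in $|\CP_s|$ and can be swallowed by slightly worsening the geometric rate. Multiplying the loop-stack factors and the $n$ attachment factors and using $\sum_s|\CP_s|\le k$ yields $|\cont([\CP])|\le C_{\reg}^{k/2}(C_{\dist}\|\varphi\|_\infty)^k\gamma_0^{-k}$, where $\gamma_0$ depends only on $\beta$, $C_{\safe}$ and the distance function.

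For \eqref{eq_lak_bound} I would bound the number of equivalence classes with $|\CP|=k$ (eigenvalue, or eigenvector ending at a fixed $\bn$) by the number of length-$k$ walks on $\Gamma$ from $\bze$, which is $\le C_{\mathrm{hop}}^k$ with $C_{\mathrm{hop}}$ depending only on the hopping range; absorbing $C_{\mathrm{hop}}$ into $C_{\dist}$ and fixing the earlier choices of $\beta,C_{\safe}$ as functions of $\tau,C_{\dio},\dist_{\varphi}$ produces $\gamma=\gamma(\tau,C_{\dio},\dist_{\varphi})$ with $|\lambda_k|\le C_{\reg}^{k/2}(C_{\dist}\|\varphi\|_\infty)^k\gamma^{-k}$. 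For the eigenvector bound, each eigenvector path from $\bze$ to $\bn$ has length $\ge\dist_{\varphi}(\bn,\bze)$ (ascents and descents cancel on the base sheet), so $|\psi_{\bn}|\le\sum_{k\ge\dist_{\varphi}(\bn,\bze)}\varepsilon^k|(\psi_k)_{\bn}|\le\sum_{k\ge\dist_{\varphi}(\bn,\bze)}\bigl(\varepsilon C_{\dist}\|\varphi\|_\infty\sqrt{C_{\reg}}/\gamma\bigr)^k$; under the hypothesis $\varepsilon<C_{\reg}^{-1/2}(C_{\dist}\|\varphi\|_\infty)^{-1}\gamma$ the ratio is $<1$, and factoring out the $\dist_{\varphi}(\bn,\bze)$-th power while bounding the geometric tail by twice its leading term (absorbable into the base since $\dist_{\varphi}(\bn,\bze)\ge1$) gives \eqref{uniform_localization}.

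It remains to convert this into a genuine eigenpair. Since $\Phi$ has finite range, $\dist_{\varphi}(\bn,\bze)\ge c|\bn|$, so \eqref{uniform_localization} gives exponential decay, hence $\psi\in\ell^1\cap\ell^2$ and $\psi_{\bze}=1\ne0$; likewise $\sum_k\varepsilon^k\lambda_k$ converges absolutely and $\sum_k\varepsilon^k\psi_k$ converges absolutely in $\ell^1$. Then, fixing $\bn$, the termwise identities built in the proof of Theorem~\ref{th_lak} may be summed over $k$ — the hopping sum at $\bn$ is finite and $\lambda\psi_{\bn}$ is a Cauchy product of absolutely convergent scalar series — to give $(\varepsilon\Phi\psi+V\psi-\lambda\psi)_{\bn}=0$, i.e.\ $V_{\bn}\psi_{\bn}=\lambda\psi_{\bn}-\varepsilon(\Phi\psi)_{\bn}$; the right side lies in $\ell^2$, so $\psi\in\dom(V)=\dom(H(x_0))$ and $H(x_0)\psi=\lambda\psi$. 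I expect the one genuinely delicate point to be the absorption of the attachment denominators in the second paragraph — verifying that $|\CP_s|\ge\safedist(\bn_s)$ really dominates the factor $\beta^{-\level(\bn_s)}$ — together with organizing the nested ``$\beta,C_{\safe}$ small enough'' requirements so that the final rate $\gamma$ depends only on $\tau,C_{\dio},\dist_{\varphi}$; everything else is routine summation of geometric series.
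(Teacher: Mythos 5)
Your proposal follows the paper's proof exactly: group paths into translational equivalence classes, decompose the canonical representative $T(\CP)$ into maximal loop stacks via \eqref{eq_end_sec2}, estimate each factor $\cont([\CP_s])$ by Theorem~\ref{th_loop_stack}(1) combined with the combinatorial bounds stated just before Theorem~\ref{main}, absorb the attachment factors $(V_{\bn_s}-V_{\bze})^{-1}$ using $|\CP_s|\ge\safedist(\bn_s)$ together with the Diophantine leveling, count equivalence classes by paths of length $k$ on a bounded-degree graph, and get eigenvector decay from the fact that every eigenvector path to $\bn$ has length at least $\dist_\varphi(\bn,\bze)$; the passage from formal to genuine eigenpair is the standard term-by-term resummation. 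You fill in several details the paper states only heuristically (in particular, the subexponential bound on the attachment denominators and the last ``honest eigenvector'' paragraph), and this is all correct.

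Two cosmetic points, neither a gap. Since $D_{\min}\ge1$ only gives $4/D_{\min}\le4$, the $(4/D_{\min})$-powers in Theorem~\ref{th_loop_stack}(1) are not ``discarded'' but bounded by $4^k$ and absorbed into the combinatorial factor $(C_{\dist})^k$ — the paper does the same silently. And bounding the geometric tail ``by twice its leading term'' presumes common ratio $\le1/2$, whereas the hypothesis of the theorem gives only $<1$; this is harmless because $\gamma$ is not claimed to be sharp, so one simply states the theorem with a slightly smaller $\gamma$ (halving it, say) than the rate actually obtained from the loop-stack estimates.
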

\begin{proof}
Recall that the coefficient at $\varepsilon^k$ at the eigenvalue series is a sum of the following terms:
$$
\cont([\CP])=\cont(\CP_0)\prod_{s=1}^m (V_{\bn_s}-V_{\bze})^{-1} \cont([\CP_s]),
$$
Each $\cont([\CP_s])$ factor can be estimated using Theorem \ref{th_loop_stack}. Due to the Diophantine condition, extra denominators $V_{\bn_s}$ can be absorbed into the bound.

For the eigenvector series, the same argument is sufficient to obtain the convergence of the series for each component of the eigenvector. However, note also that the most ``efficient'' way to contribute into $\psi_{\bn}$ is to consider a path that only has a safe base loop, since each attachment ``resets'' the route, and, due to cancellations, we do not need to worry about multiple attachments at small denominators. The bound \eqref{uniform_localization} follows from the safe loop bound similar to Lemma \ref{lemma_safe_loop_contrib}.
\end{proof}
\begin{cor}
\label{cor_lambda}
Suppose that $f$ is (uniformly) $C_{\reg}$-regular on an interval $(a,b)\subset (-1/2,1/2)$. Denote by $\lambda(x)$, where $x\in (a,b)\setminus(\Z+1/2+\Z^d\cdot\omega)$, the result of applying Theorem $\ref{main}$ to the operator $H(x)$. Then, the function $\lambda(x)$ extends to a continuous strictly monotone function on $(a,b)$.
\end{cor}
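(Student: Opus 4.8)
\emph{Strategy.} The corollary has two parts, and I would prove them separately: the continuous extension using the uniform bound of Theorem \ref{main}, and the strict monotonicity using the Lipschitz bound of Theorem \ref{th_loop_stack}(2) together with one elementary extra estimate. For the continuous extension, note that since $\Phi$ has finite range, for each $k$ the coefficient $\lambda_k(x)=\sum_{[\CP]\colon|\CP|=k}\cont([\CP])(x)$ is a finite sum, and each $\cont([\CP])(x)$ is a finite product of constants $\Phi_{\bn'\bn}$ and factors $(f(x)-f(x+\bn\cdot\omega))^{-1}$, $\bn\neq\bze$. Each such factor extends to a function that is continuous, and Lipschitz on every compact subinterval of $(a,b)$: away from the poles of $f(\,\cdot+\bn\cdot\omega)$ this is immediate from continuity of $f$ and from $f(x)\neq f(x+\bn\cdot\omega)$ (strict monotonicity on $(a,b)$), while at such a pole the factor tends to $0$ from both sides and the Lipschitz property there is exactly condition $\mathrm{(cr2)}$ applied to $g$. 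Hence every $\lambda_k$ extends to a locally Lipschitz function on $(a,b)$, and by the uniform bound $|\lambda_k|\le C_{\reg}^{k/2}(C_{\dist}\|\varphi\|_\infty)^k\gamma^{-k}$ the series $\sum_k\varepsilon^k\lambda_k$ converges uniformly on $(a,b)$; therefore $\lambda$ extends to a continuous function on $(a,b)$.

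\emph{Monotonicity.} Write $\lambda=f+R$ with $R=\sum_{k\ge2}\varepsilon^k\lambda_k$ (recall $\lambda_1=0$). It suffices to show that $R$ is Lipschitz on compact subintervals of $(a,b)$ with constant $\le\kappa(\varepsilon)D_{\min}$, where $\kappa(\varepsilon)\to0$ as $\varepsilon\to0$: then for $x<y$, using $f(y)-f(x)\ge D_{\min}(y-x)$ from $\mathrm{(cr1)}$, one gets $\lambda(y)-\lambda(x)\ge(1-\kappa)D_{\min}(y-x)>0$ once $\varepsilon$ is small, and combined with the continuous extension this yields strict monotonicity on all of $(a,b)$. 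To bound $\mathrm{Lip}(\lambda_k)$ I would bound $|\frac{d}{dx}\cont([\CP])(x)|$ for each equivalence class $[\CP]$ with $|\CP|=k$ at points $x\notin\Z+1/2+\Z^d\cdot\omega$ and then sum over $[\CP]$ (the pointwise derivative bound off the finitely many poles in a given compact subinterval upgrades to a Lipschitz bound by continuity of $\cont([\CP])$ across those poles). The $x$-dependence in $\cont([\CP])(x)$ enters only through $V_{\bze}=f(x)$ and $V_{\bn}=f(x+\bn\cdot\omega)$, so by the chain rule
\[
\frac{d}{dx}\cont([\CP])(x)=f'(x)\,\partial_{V_{\bze}}\cont([\CP])\;+\;\frac{d}{dt}\cont([\CP],t)\Big|_{t=0}.
\]
The second term is precisely the loop-translation derivative: applying the Leibniz rule to the maximal-loop-stack decomposition \eqref{eq_end_sec2}, it is controlled by Theorem \ref{th_loop_stack}(2) (with Lemma \ref{lemma_single_denominator} handling the extra attachment denominators) and is bounded by $C_{\reg}\,\mathrm{poly}(k)\,D_{\min}$ times the right-hand side of Theorem \ref{th_loop_stack}(1), where the $\mathrm{poly}(k)$ absorbs $\beta^{-\maxlevel(k)}\le\mathrm{poly}(k)$ from \eqref{eq_l_bound} and I use $D_{\min}\ge1$. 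The first term I would handle by an induction on $\height(\CP)$ parallel to the proof of Theorem \ref{th_loop_stack}: $\partial_{V_{\bze}}$ applied to \eqref{eq_end_sec2} produces, via Leibniz, only summands in which at most one extra inverse denominator appears, so — since $\totallevel(\CP)=O(|\CP|)$ on a loop stack whose loops are all safe — the extra cost is just a fixed power of $\beta^{-1}$ per unit length, absorbed once $\beta$ is chosen small, giving again $|f'(x)\,\partial_{V_{\bze}}\cont([\CP])|\le C_{\reg}\,\mathrm{poly}(k)\,D_{\min}$ times the right-hand side of Theorem \ref{th_loop_stack}(1). As a consistency check on this first term, its sum over all classes is $f'(x)\,\partial_{V_{\bze}}\lambda$, and Feynman--Hellmann for the rank-one perturbation $\partial_{V_{\bze}}H=e_{\bze}\otimes e_{\bze}$ gives $\partial_{V_{\bze}}\lambda=1/\|\psi\|^2=1-O(\varepsilon^2)$ by \eqref{uniform_localization}. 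Summing over the at most $C^k$ classes of length $k$, as in the passage leading to \eqref{eq_lak_bound}, yields $\mathrm{Lip}(\lambda_k)\le C_{\reg}\,\mathrm{poly}(k)\,D_{\min}\,(C'/\gamma')^k$, hence $\mathrm{Lip}(R)\le D_{\min}\sum_{k\ge2}\varepsilon^kC_{\reg}\,\mathrm{poly}(k)(C'/\gamma')^k=\kappa(\varepsilon)D_{\min}$ with $\kappa(\varepsilon)\to0$, possibly after shrinking the admissible range of $\varepsilon$.

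\emph{Main obstacle.} The substantive step is the monotonicity, and within it the observation that differentiating a path contribution in the physical phase $x$ decomposes into the loop-translation derivative — already tamed by the cancellation mechanism behind Theorem \ref{th_loop_stack}(2) — plus a single extra $V_{\bze}$-term that lowers only one inverse denominator per summand (whereas naively differentiating the divergent series \eqref{eq_divergent} would cost two powers of a small denominator, exactly the blow-up the cancellations are designed to cure). Re-running the loop-stack induction for this extra term, and checking that the polynomial-in-$k$ losses coming from $\maxlevel(k)=O(\log k)$ and from counting denominators on a path are absorbed by the geometric decay, is routine but is the part that requires care.
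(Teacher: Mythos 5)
Your proof is correct and follows the paper's approach: write $\lambda(x)=f(x)+\sum_{k\ge 1}\varepsilon^k\lambda_k(x)$, bound the derivatives of the correction terms via Theorem \ref{th_loop_stack}(2), and conclude strict monotonicity from the uniform lower bound $f'\ge D_{\min}$ supplied by (cr1) (continuity being a consequence of uniform convergence of the locally Lipschitz partial sums). Your treatment is more careful than the paper's three-sentence argument in one genuinely substantive respect: you correctly observe that $\frac{d}{dt}\cont([\CP],t)$ in the paper's formalism holds $V_{\bze}$ fixed and hence is not the full $x$-derivative, and you supply the missing $f'(x)\,\partial_{V_{\bze}}\cont([\CP])$ contribution by re-running the loop-stack induction — a point the paper's ``in a manner similar to \eqref{eq_lak_bound}, using part (2) of Theorem \ref{th_loop_stack}'' leaves implicit.
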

\begin{proof}
We have, from \eqref{eq_lak_mainth},
$$
\lambda(x)=f(x)+\varepsilon \lambda_1(x)+\varepsilon^2\lambda_2(x)+\ldots.
$$
The first term is strictly monotone in $x$ with a positive lower bound on the derivative. The derivatives of the remaining terms can be estimated in a manner similar to \eqref{eq_lak_bound}, using part (2) of Theorem \ref{th_loop_stack}. Therefore, after possibly choosing a smaller $\varepsilon$, $\lambda(x)$ will have the same lower bound on the derivative (which can be made arbitrarily close to that of $f$).
\end{proof}
\begin{cor}
\label{cor_psi}
Suppose, $f$ is $C_{\reg}$-regular on $(-1/2,1/2)$ and $D_{\min}\ge 1$. Let $x_0\in (-1/2,1/2)\setminus(\Z+1/2+\Z^d\cdot\omega)$ and $\psi[\bn]_{\bm}:=\psi(x_0+\bn\cdot\omega)_{\bm}$. Here, $\lambda(x)$ is defined in the same way as in Corollary $\ref{cor_lambda}$, and $\psi(x)$ as in Theorem $\ref{main}$. Then
\bee
\label{eq_eigenfunction_x0}
H(x_0)\psi[\bn]=\lambda_{\bn}\psi[\bn],
\ene
where $\lambda_{\bn}=\lambda(x_0+\bn\cdot\omega)$. The spectrum of $H(x_0)$ is pure point, and the above eigenvectors form a complete system.
\end{cor}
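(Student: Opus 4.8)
The plan is to produce the eigenvectors $\psi[\bn]$ as suitable translates of the eigenvectors already furnished by Theorem \ref{main}, to deduce mutual orthogonality from the distinctness of the corresponding eigenvalues (Corollary \ref{cor_lambda}), and to obtain completeness from the uniform off-diagonal decay \eqref{uniform_localization} by a diagonal-dominance argument. A preliminary remark is that the constants that enter Theorem \ref{main} at a phase $x$ --- namely $C_{\reg}$, the quantity $\gamma=\gamma(\tau,C_{\dio},\dist_{\varphi})$, the lower bound $D_{\min}\ge 1$, and hence the admissible range of $\varepsilon$ --- do not depend on $x$ under the present hypotheses, so there is a single $\varepsilon_0>0$ for which the theorem applies at every phase simultaneously.

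\emph{Step 1 (construction).} Let $(T_{\bn}u)_{\bm}=u_{\bm+\bn}$, a unitary on $\ell^2(\Z^d)$. Since the hopping term in \eqref{eq_h_def} is a convolution and the potential of $H(x_0+\bn\cdot\omega)$ at site $\bm$ is $f(x_0+(\bm+\bn)\cdot\omega)$, a direct computation gives $T_{\bn}H(x_0)T_{\bn}^{-1}=H(x_0+\bn\cdot\omega)$, equivalently $H(x_0)=T_{\bn}^{-1}H(x_0+\bn\cdot\omega)T_{\bn}$. For each $\bn$ the phase $x_0+\bn\cdot\omega$ (reduced mod $1$) again lies in $(-1/2,1/2)\setminus(\Z+1/2+\Z^d\cdot\omega)$, so Theorem \ref{main} applies and yields $\psi(x_0+\bn\cdot\omega)\in\ell^2(\Z^d)$ with $H(x_0+\bn\cdot\omega)\psi(x_0+\bn\cdot\omega)=\lambda(x_0+\bn\cdot\omega)\psi(x_0+\bn\cdot\omega)$. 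Put $\psi[\bn]:=T_{\bn}^{-1}\psi(x_0+\bn\cdot\omega)$, i.e. $\psi[\bn]_{\bm}=\psi(x_0+\bn\cdot\omega)_{\bm-\bn}$. The covariance relation then gives $H(x_0)\psi[\bn]=\lambda_{\bn}\psi[\bn]$ with $\lambda_{\bn}=\lambda(x_0+\bn\cdot\omega)$, which is \eqref{eq_eigenfunction_x0}, and \eqref{uniform_localization} becomes $|\psi[\bn]_{\bm}|\le q^{\dist_{\varphi}(\bm,\bn)}$ with $q:=C_{\dist}\|\varphi\|_{\infty}\sqrt{C_{\reg}}\,\varepsilon/\gamma<1$. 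In particular $\psi[\bn]_{\bn}=1$ (the normalization of Theorem \ref{th_lak}), so $\psi[\bn]\ne 0$.

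\emph{Step 2 (orthogonality).} As $H(x_0)$ is self-adjoint, $\psi[\bn]\perp\psi[\bm]$ whenever $\lambda_{\bn}\ne\lambda_{\bm}$, so it suffices that $\bn\mapsto\lambda_{\bn}$ be injective. By Corollary \ref{cor_lambda}, $\lambda$ extends to a continuous, strictly monotone (hence injective) function on $(-1/2,1/2)$; it is $1$-periodic because $f$ and all of its perturbative corrections are. Since $1,\omega_1,\dots,\omega_d$ are rationally independent, the points $x_0+\bn\cdot\omega$ reduced mod $1$ are pairwise distinct and lie in $(-1/2,1/2)$, so injectivity of $\lambda$ gives $\lambda_{\bn}\ne\lambda_{\bm}$ for $\bn\ne\bm$. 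Hence $\{\psi[\bn]\}_{\bn\in\Z^d}$ is an orthogonal family of nonzero eigenvectors of $H(x_0)$.

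\emph{Step 3 (completeness).} Let $A$ be the operator with matrix $A_{\bm\bn}=\langle e_{\bm},\psi[\bn]\rangle$ in the standard basis, so that $Ae_{\bn}=\psi[\bn]$, and write $A=I+R$. Then $R_{\bn\bn}=0$ and $|R_{\bm\bn}|\le q^{\dist_{\varphi}(\bm,\bn)}$ for $\bm\ne\bn$. Because $\dist_{\varphi}$ is translation invariant and (for finite-range $\varphi$) the number of lattice points at $\dist_{\varphi}$-distance $r$ from a fixed point grows at most polynomially in $r$, the quantity $S(q):=\sum_{\bk\ne\bze}q^{\dist_{\varphi}(\bk,\bze)}$ is finite and tends to $0$ as $q\to 0$, and both $\sup_{\bn}\sum_{\bm}|R_{\bm\bn}|$ and $\sup_{\bm}\sum_{\bn}|R_{\bm\bn}|$ are bounded by $S(q)$. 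Shrinking $\varepsilon_0$ so that $S(q)<1$, the Schur test gives $\|R\|_{\ell^2\to\ell^2}\le S(q)<1$, so $A=I+R$ is bounded and boundedly invertible on $\ell^2(\Z^d)$. Thus $\{\psi[\bn]\}_{\bn\in\Z^d}$ is a Riesz basis of $\ell^2(\Z^d)$; being also orthogonal, it is a complete orthogonal system of eigenvectors of $H(x_0)$, which forces the spectrum of $H(x_0)$ to be pure point. I expect the only genuinely delicate point to be the uniformity recorded in the preliminary remark and at the start of Step 1 --- that one $\varepsilon_0$ and one pair $(C_{\reg},\gamma)$ serve all shifted phases $x_0+\bn\cdot\omega$ at once; granting that, the remainder is the standard fact that an ``identity plus small off-diagonal'' matrix is invertible, together with the orthogonality of eigenvectors at distinct eigenvalues.
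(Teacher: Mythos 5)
Your proposal is correct and takes essentially the same approach as the paper: translational covariance for the eigenvalue equation, a ``$U$ is close to the identity on the standard basis'' argument (you via a Schur test on $A=I+R$, the paper via $\|UU^*-I\|\le c\varepsilon$ and $\|U^*U-I\|\le c\varepsilon$) for completeness, and Corollary \ref{cor_lambda}'s strict monotonicity for distinctness of the $\lambda_{\bn}$. You are a bit more explicit than the paper about the translation $T_{\bn}^{-1}$ hidden in the notation $\psi[\bn]$, about the orthogonality following from distinct eigenvalues and self-adjointness, and about the uniformity of $(C_{\reg},\gamma,\varepsilon_0)$ across all shifted phases, all of which are fine and indeed necessary points.
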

\begin{proof}
The eigenfunction equation \eqref{eq_eigenfunction_x0} follows from a standard translational covariance computation. Consider the operator $U$ with columns $\psi[\bn]$. Clearly,
$$
\|U U^*-I\|\le c\varepsilon,\quad \|U^*U-I\|\le c\varepsilon,
$$
where $c=c(C_{\safe},C_{\dio},\dist_{\varphi},\|\varphi\|_{\infty},\tau)$. Therefore, the span of the eigenvectors $\psi[\bn]$ is dense in $\ell^2(\Z^d)$. Corollary \ref{cor_lambda} implies that the spectrum is simple.
\end{proof}
\begin{rem}
The inverse function $\lambda^{-1}(E)$ is equal to $N(E)-1/2$, where $N(E)$ is the integrated density of states of the ergodic operator family $H(x)$. The easiest way to see that is to define IDS as the expectation value of the spectral measure.
\end{rem}
\begin{rem}
One can easily check that if $f$ is a meromorphic function in a neighborhood of $\R$, then both $\lambda(x)$ and $N(E)$ are also meromorphic. 
\end{rem}
\begin{rem}
\label{rem_uniform_localization}
Under the assumptions of Corollary \ref{cor_psi}, one can normalize the eigenfunctions $\psi[\bn]$ in $\ell^{\infty}(\Z^d)$ in order to satisfy {\it uniform localization}:
\bee
\label{eq_uniform_localization}
\psi[\bn]_{\bn}=1;\quad |\psi[\bn]_{\bm}|\le c \varepsilon^{\left\lfloor\frac{|\bn-\bm|}{c_{\dist}} \right\rfloor},\quad \bm\neq \bn,
\ene
where $c$ and $c_{\dist}$ can be picked uniformly in the region of applicability of Theorem \ref{main}, that is, whenever $f$ is $C_{\reg}$-regular at $\{x_0+\bn\cdot\omega\}$.
In other words, each eigenvector is a small perturbation of $e_{\bn}$, and its components decay exponentially in the distance from the ``localization center'' $\bn$, with an upper bound that is uniform for all eigenvectors.
\end{rem}
\begin{rem}
The fact that $\lambda(x)$ extends continuously into $(-1/2,1/2)$ has the following meaning: suppose that $x\in \Z+\bn\cdot\omega+1/2$. Then the potential becomes infinite at $\bn$. As usual, one can understand it as a Dirichlet boundary condition imposed at $\bn$. For the usual Schr\"odinger operator on $\Z$ with nearest neighbor hopping, the problem thus splits into a direct sum of two half-line operators, each having pure point spectrum.
\end{rem}
\subsection{The local result} Theorem \ref{th_loop_stack} only relies on $C_{\reg}$-regularity of $f$ at one point $x$. In particular, it does not require monotonicity of $f$ outside the interval $(a,b)$ (although it still needs the ``regularity at infinity'' condition (cr2)). As a consequence, at any such point $x$ the perturbation series for the eigenvalue and the eigenfunction will converge. If $f$ is $C_{\reg}$-regular on an interval, it implies convergence of the perturbation series for all eigenvalues and eigenfunctions whose (unperturbed) energies fall into the image of that interval. The following results state that, if one slightly decreases the size of the interval, then these eigenfunctions will actually exhaust the spectral projection of $H$ in the smaller interval. In other words, regularity of $f$ on an interval implies Anderson localization for $H$ on the same interval.

\begin{lem}
\label{lemma_local_localization}
Suppose that $f$ is $C_{\reg}$-regular on $(\alpha,\beta)$, where $-1/2<\alpha<\beta<1/2$ and $D_{\min}\ge 1$. Fix $x_0\in (-1/2,1/2)\setminus(\Z+1/2+\Z^d\cdot\omega)$ and apply Theorem $\ref{main}$ for all $x=x_0+\bn\cdot\omega\in (\alpha,\beta)$. In the notation of Corollary $\ref{cor_psi}$, denote
$$
\Pi_{(\alpha,\beta)}=\overline{\mathrm{span}\{\psi[\bn]\colon \{x_0+\bn\cdot\omega\}\in (\alpha,\beta),\bn\in \Z^d\}}.
$$
There exist $c=c(C_{\reg},C_{\dio},\dist_{\varphi})>0$ and $\varepsilon_0=\varepsilon_0(C_{\reg},C_{\dio},\dist_{\varphi},\|\varphi\|_{\infty})$ such that, for $0<\varepsilon<\varepsilon_0$ and all $\psi\perp \Pi_{(\alpha,\beta)}$, we have
$$
\left\|\left(H(x_0)-\frac{f(\alpha)+f(\beta)}{2} I\right)\psi\right\|\ge \frac{f(\beta)-f(\alpha)}{2} (1-c\varepsilon)\|\psi\|.
$$
\end{lem}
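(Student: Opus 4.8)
The plan is to identify $\Pi_{(\alpha,\beta)}$ with a small perturbation of the coordinate subspace of $\ell^2(\Z^d)$ spanned by $\{e_{\bn}\}$ over the ``good'' sites $S:=\{\bn\in\Z^d\colon\{x_0+\bn\cdot\omega\}\in(\alpha,\beta)\}$, and then to use the fact that on the complementary coordinate subspace $H(x_0)-E_0I$ (with $E_0:=\tfrac12(f(\alpha)+f(\beta))$, $\delta:=\tfrac12(f(\beta)-f(\alpha))$) is trivially invertible up to the $O(\varepsilon)$ hopping error. Let $Q$ be the orthogonal projection of $\ell^2(\Z^d)$ onto $\overline{\mathrm{span}\{e_{\bn}\colon\bn\in S\}}$ and $P=I-Q$. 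The first, purely one-dimensional, step is to check that $C_{\reg}$-regularity of $f$ on $(\alpha,\beta)$ forces $f$ to be strictly increasing on $(\alpha,\beta)$ and forces $f\big((-1/2,1/2)\setminus(\alpha,\beta)\big)$ to be disjoint from $\big(f(\alpha),f(\beta)\big)$; both follow from (cr0) applied at interior points of $(\alpha,\beta)$. In particular $|V_{\bn}-E_0|\ge\delta$ for every $\bn\notin S$, which is the only place the behaviour of $f$ outside $(\alpha,\beta)$ enters.

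Next I would quantify that $\Pi_{(\alpha,\beta)}$ is $\varepsilon$-close to $\ran Q$. For $\bn\in S$ the point $x=x_0+\bn\cdot\omega$ avoids $\Z+1/2+\Z^d\cdot\omega$ and $f$ is $C_{\reg}$-regular there, so Theorem~\ref{main} applies and yields the eigenvector $\psi[\bn]$ with $\psi[\bn]_{\bn}=1$ and, by \eqref{uniform_localization}, $|\psi[\bn]_{\bm}|\le(c_0\varepsilon)^{\dist_{\varphi}(\bn,\bm)}$ for $\bm\ne\bn$. Write $\psi[\bn]=e_{\bn}+r_{\bn}$ with $r_{\bn}\perp e_{\bn}$ and let $R\colon\ell^2(S)\to\ell^2(\Z^d)$ be the operator whose columns are the $r_{\bn}$. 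A Schur test on the exponentially decaying matrix entries of $R$ (using that $\dist_{\varphi}$ is a translation-invariant graph metric of bounded degree) gives $\|R\|\le c_1\varepsilon$ once $\varepsilon$ is small. Since $\Pi_{(\alpha,\beta)}=\overline{\ran(\iota_S+R)}$, where $\iota_S$ is the inclusion $\ell^2(S)\hookrightarrow\ell^2(\Z^d)$, the hypothesis $\psi\perp\Pi_{(\alpha,\beta)}$ means $(\iota_S^*+R^*)\psi=0$, i.e. $Q\psi=-R^*\psi$, so that $\|Q\psi\|\le c_1\varepsilon\|\psi\|$.

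The rest is assembly. We may assume $\psi\in\dom(H(x_0))$ and $\|\psi\|=1$, since otherwise the left-hand side is $+\infty$. Because $V$ is bounded on $\ell^2(S)$, $Q\psi\in\dom(H(x_0))$ and hence $P\psi=\psi-Q\psi\in\dom(H(x_0))$, with $\|(H(x_0)-E_0)Q\psi\|\le(\delta+\varepsilon\|\Phi\|)\|Q\psi\|=O(\varepsilon)$, where $\|\Phi\|\le C_{\dist}\|\varphi\|_{\infty}$. The vector $P\psi$ is supported off $S$, so the first step gives $\|(V-E_0)P\psi\|\ge\delta\|P\psi\|$ and therefore $\|(H(x_0)-E_0)P\psi\|\ge(\delta-\varepsilon\|\Phi\|)\|P\psi\|\ge(\delta-\varepsilon\|\Phi\|)(1-c_1\varepsilon)$. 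Combining with $\|(H(x_0)-E_0)\psi\|\ge\|(H(x_0)-E_0)P\psi\|-\|(H(x_0)-E_0)Q\psi\|$ and collecting all the $O(\varepsilon)$ terms gives $\|(H(x_0)-E_0)\psi\|\ge\delta(1-c\varepsilon)$, with $c$ and $\varepsilon_0$ chosen so that the previous paragraph and the absorptions above are valid.

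The main obstacle is the second step: extracting the quantitative bound $\|Q\psi\|=O(\varepsilon)\|\psi\|$ from near-orthogonality of $\psi$ to the whole family $\{\psi[\bn]\}_{\bn\in S}$. This works only because the localization bound \eqref{uniform_localization} is uniform in the center $\bn$, which is exactly what makes the remainder operator $R$ genuinely small in operator norm rather than merely having small columns. Everything else is routine, the sole mild subtlety being the unboundedness of $H(x_0)$, dealt with by noting that $V$ — hence $H(x_0)$ — restricted to $\ell^2(S)$ is bounded, so the domain is never left.
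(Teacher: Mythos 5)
Your proof is correct and follows essentially the same route as the paper's: split $\psi$ according to the coordinate projection onto the ``good'' sites $S$, use uniform localization of $\{\psi[\bn]\}_{\bn\in S}$ to show that $\psi\perp\Pi_{(\alpha,\beta)}$ forces the good-site component to be $O(\varepsilon)$ small, and then apply the triangle inequality together with the lower bound on $(V-E_0)$ off $S$ and the bound $\|\Phi\|=O(\|\varphi\|_\infty)$. The paper packages the first step via the operator $U$ sending $e_{\bn}\mapsto\psi[\bn]$ and the inequality $\|UU^*-E\|\le c_1\varepsilon$, whereas you work directly with the remainder $R=U-\iota_S$ and a Schur test on its exponentially decaying entries; these are equivalent ($\|UU^*-E\|\le 2\|R\|+\|R\|^2$). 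You also make explicit two details the paper leaves implicit: (i) that $(\mathrm{cr0})$ forces $f\big((-1/2,1/2)\setminus(\alpha,\beta)\big)$ to miss $(f(\alpha),f(\beta))$, which is exactly what is needed for the lower bound $\|(V-E_0)P\psi\|\ge\delta\|P\psi\|$, and (ii) the domain bookkeeping for the unbounded $H(x_0)$. Both observations are sound and worth spelling out.
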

\begin{proof}
Denote by $E$ the orthogonal projection onto the subspace
$$
\mathrm{Ran}\, E =\overline{\mathrm{span}\{e_{\bn}\colon \{x_0+\bn\cdot\omega\}\in (\alpha,\beta),\bn\in \Z^d\}}.
$$
Let also $U\colon \ell^2(\Z^d)\to \ell^2(\Z^d)$ be defined on the standard basis by
$$
Ue_{\bn}=\begin{cases}
\psi[\bn],& \{x_0+\bn\cdot\omega\}\in (\alpha,\beta)\\
0,& \{x_0+\bn\cdot\omega\}\notin (\alpha,\beta).
\end{cases}
$$
One can check, using \eqref{eq_uniform_localization}, that
\bee
\label{eq_uustar}
\|U U^*-E\|\le c_1\varepsilon,
\ene
where $c_1$ depends on $c$ and $c_{\dist}$ from \eqref{eq_uniform_localization}. Let $\psi\perp \Pi_{(\alpha,\beta)}$. Then $U^*\psi=0$, and \eqref{eq_uustar} implies $\|E\psi\|\le c_1\varepsilon\|\psi\|$. Recall that $H(x_0)=V(x_0)+\varepsilon\Phi$, where $V(x_0)$ is an operator of multiplication by $f(x_0+\bn\cdot\omega)$, and let $\psi'=E\psi$, $\psi''=(1-E)\psi$. From the definition of $E$ it follows that
$$
\left\|\left(V(x_0)-\frac{f(\beta)+f(\alpha)}{2}\right) \psi''\right\|\ge \frac{f(\beta)-f(\alpha)}{2}\|\psi''\|\ge \frac{f(\beta)-f(\alpha)}{2}(1-c_1 \varepsilon)\|\psi\|,
$$
$$
\left\|\left(V(x_0)-\frac{f(\beta)+f(\alpha)}{2}\right) \psi'\right\|\le \frac{f(\beta)-f(\alpha)}{2}\|\psi'\|\le \frac{f(\beta)-f(\alpha)}{2}c_1 \varepsilon\|\psi\|.
$$
By combining the previous bounds, we obtain
\begin{multline*}
\left\|H(x)\psi-\frac{f(\alpha)+f(\beta)}{2}\psi\right\|\\
\ge \left\|\left(V(x)-\frac{f(\beta)+f(\alpha)}{2}\right) \psi''\right\|-\varepsilon\|\Phi \psi\|-\left\|\left(V(x)-\frac{f(\beta)+f(\alpha)}{2}\right) \psi'\right\| \\
\ge \left(\frac{f(\beta)-f(\alpha)}{2}(1-c_1\varepsilon)-\varepsilon\|\Phi\|-\frac{f(\beta)-f(\alpha)}{2}c_1 \varepsilon\right)\|\psi\|,
\end{multline*}
from which the claim follows.
\end{proof}
The original definition of $C_{\reg}$-regularity may be too restrictive if the interval $(f(\alpha),f(\beta))$ is small, since it requires monotonicity of $f$ in a large neighborhood of that interval. It is more convenient to formulate a general result using the rescaled version of regularity in the sense of \ref{rem_creg}:
\begin{thm}
\label{th_local_localization}
Suppose that $f$ is $C_{\reg,\nu}$-regular on $(\alpha,\beta)$, where $-1/2<\alpha<\beta<1/2$ and $D_{\min}(x)\ge D_0>0$ on $(\alpha,\beta)$. There exists $\ep_0=\ep_0(C_{\reg},C_{\dio},\dist_{\varphi},\|\varphi\|_{\infty},\nu,D_0)$ such that, for $0<\ep<\ep_0$ and all $x_0\in (-1/2,1/2)\setminus(\Z+\Z^d\cdot\omega)$, the spectrum of $H(x_0)$ in the interval $[f(\alpha),f(\beta)]$ is pure point, and the eigenfunctions $\psi[\bn]$, whose energies are in that interval, form a complete system in the range of the spectral projection onto $[f(\alpha),f(\beta)]$.
\end{thm}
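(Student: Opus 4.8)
The plan is to reduce Theorem~\ref{th_local_localization} to Lemma~\ref{lemma_local_localization} and Theorem~\ref{main}. Two preliminary normalizations are needed. First, after dividing the operator $H(x_0)$ by a suitable constant $s>0$ --- which replaces $f$ by $s^{-1}f$ and $\ep$ by $\ep/s$, and transforms spectral projections by $P^{s^{-1}H(x_0)}_{[s^{-1}E_1,\,s^{-1}E_2]}=P^{H(x_0)}_{[E_1,E_2]}$ --- a routine check of (cr0)--(cr2) (the energy windows only shrink, $f'$ and $D_{\min}$ scale by $s^{-1}$, the function $g$ of (cr2) scales by $s$) lets us reduce to $\nu=1$ and $D_{\min}\ge1$, at the cost of a factor depending on $\nu,D_0$ in the eventual $\ep_0$. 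Second, and more substantially, I would enlarge $(\alpha,\beta)$ to an open interval $(\alpha',\beta')$ with $[\alpha,\beta]\subset(\alpha',\beta')\subset(-1/2,1/2)$ on which $f$ is still $C_{\reg}$-regular with controlled constants and $D_{\min}\ge1$ (absorbing one more constant factor into $s$), so that $\eta:=\min\{f(\alpha)-f(\alpha'),\,f(\beta')-f(\beta)\}>0$. This is possible because, by (cr0) applied at points of $(\alpha,\beta)$ tending to the endpoints, the nice interval $f^{-1}((f(x)-2,f(x)+2))$ overhangs $[\alpha,\beta]$ by a fixed positive amount on which $f$ is strictly increasing with $f'$ controlled by (cr1); for a point $x_1$ in the overhang one verifies (cr0)--(cr2), for a slightly smaller window, from their counterparts at the nearest endpoint of $[\alpha,\beta]$, since the relevant energy windows, preimages and derivative bounds all shrink. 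I expect this enlargement step to be the main obstacle: it is the only place where the hypotheses must be stretched, and (cr2) --- the regularity-at-infinity condition --- has to be re-checked on a slightly modified domain.

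Granting this, for every $\bn$ with $\{x_0+\bn\cdot\omega\}\in(\alpha',\beta')$, $f$ is $C_{\reg}$-regular at $\{x_0+\bn\cdot\omega\}$, so Theorem~\ref{main} (which uses regularity only at the single base point) produces a convergent Rayleigh--Schr\"odinger eigenpair $(\lambda_{\bn},\psi[\bn])$ with $\lambda_{\bn}=\lambda(x_0+\bn\cdot\omega)$, the map $x\mapsto\lambda(x)$ being continuous and strictly increasing by Corollary~\ref{cor_lambda}, and $\psi[\bn]$ obeying the uniform-localization bound \eqref{eq_uniform_localization} of Remark~\ref{rem_uniform_localization} with constants uniform over $(\alpha',\beta')$. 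Lemma~\ref{lemma_local_localization}, applied on the enlarged interval, then gives: for every $\psi\perp\Pi_{(\alpha',\beta')}$,
$$
\bigl\|\bigl(H(x_0)-m'\bigr)\psi\bigr\|\ \ge\ \rho'(1-c\ep)\,\|\psi\|,\qquad m':=\tfrac{f(\alpha')+f(\beta')}{2},\quad \rho':=\tfrac{f(\beta')-f(\alpha')}{2},
$$
and for $\ep$ small enough that $c\ep\rho'<\eta$, the open interval $(m'-\rho'(1-c\ep),\,m'+\rho'(1-c\ep))$ strictly contains $[f(\alpha),f(\beta)]$.

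Finally, let $P$ be the spectral projection of $H(x_0)$ onto $[f(\alpha),f(\beta)]$. Each $\psi[\bn]$ with $\lambda_{\bn}\in[f(\alpha),f(\beta)]$ is an eigenvector with eigenvalue in the interval, hence lies in $\mathrm{Ran}\,P$; to prove completeness, suppose $0\ne\psi\in\mathrm{Ran}\,P$ is orthogonal to all of them. Any $\psi[\bm]$ with $\{x_0+\bm\cdot\omega\}\in(\alpha',\beta')$ but $\lambda_{\bm}\notin[f(\alpha),f(\beta)]$ is an eigenvector for an eigenvalue outside $[f(\alpha),f(\beta)]$, hence lies in $\mathrm{Ran}\,(I-P)$ and is also orthogonal to $\psi$; therefore $\psi\perp\Pi_{(\alpha',\beta')}$, and the displayed bound yields $\|(H(x_0)-m')\psi\|\ge\rho'(1-c\ep)\|\psi\|$. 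But $\psi\in\mathrm{Ran}\,P$ forces $\|(H(x_0)-m')\psi\|\le\max\{|f(\alpha)-m'|,|f(\beta)-m'|\}\,\|\psi\|$, which is strictly smaller than $\rho'(1-c\ep)\|\psi\|$ by the choice of $\ep$ --- a contradiction. Hence $\mathrm{Ran}\,P=\overline{\mathrm{span}}\{\psi[\bn]\colon\lambda_{\bn}\in[f(\alpha),f(\beta)]\}$; in particular $\mathrm{Ran}\,P$ is contained in the closed span of eigenvectors of $H(x_0)$, so $H(x_0)$ restricted to $\mathrm{Ran}\,P$ is pure point, i.e.\ the spectrum of $H(x_0)$ in $[f(\alpha),f(\beta)]$ is pure point. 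Undoing the initial rescaling gives the theorem, with $\ep_0$ depending on $C_{\reg},C_{\dio},\dist_\varphi,\|\varphi\|_\infty,\nu,D_0$.
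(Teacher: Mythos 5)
Your proposal is correct and follows essentially the same route as the paper: enlarge $(\alpha,\beta)$ slightly (the paper passes to $C_{\reg,\nu/2}$-regularity on $(\alpha',\beta')$), rescale to standard $C_{\reg}$-regularity with $D_{\min}\ge1$, and then invoke Lemma~\ref{lemma_local_localization}. The only difference is cosmetic --- you rescale before enlarging, the paper enlarges before rescaling --- and you spell out the final spectral-projection argument (that a vector in $\mathrm{Ran}\,P$ orthogonal to the $\psi[\bn]$ with $\lambda_{\bn}\in[f(\alpha),f(\beta)]$ is automatically orthogonal to the remaining $\psi[\bm]$ with $\{x_0+\bm\cdot\omega\}\in(\alpha',\beta')$, and then derives a contradiction between the lower bound of Lemma~\ref{lemma_local_localization} and the trivial upper bound on $\|(H(x_0)-m')\psi\|$ for $\psi\in\mathrm{Ran}\,P$), which the paper leaves implicit behind ``the result follows from applying Lemma~\ref{lemma_local_localization}.'' Your caution about re-verifying (cr0)--(cr2) on the enlarged interval is fair --- the paper simply asserts this --- and your sketch of why it works (the $2\nu$-window from (cr0) at points near the endpoints overhangs $[\alpha,\beta]$, and the smaller $\nu/2$-windows sit inside it, with (cr2) on the new exterior handled by the old (cr2) far away and by (cr1) plus the lower bound $|f(x_1)-f(x)|\ge\nu/2$ nearby) is the right argument; the resulting $C_{\reg}$ may degrade by a $\nu$-dependent factor, but this is harmlessly absorbed into $\ep_0(\nu,\dots)$, as you note.
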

\begin{proof}
Since $f$ is $C_{\reg,\nu}$-regular on $(\alpha,\beta)$, we can assume that $f$ is $C_{\reg,\nu/2}$-regular in a slightly larger interval $(\alpha',\beta')$. Now the result follows from applying Lemma \ref{lemma_local_localization} to the rescaled function $(\nu/2)^{-1}f$ and choosing an appropriately small $\ep$. The case of arbitrary $D_0$ can also be treated by a similar rescaling.
\end{proof}
\section{Generalizations: the long-range case and non-constant hopping terms}
\subsection{Non-constant long range hopping terms}A {\it quasiperiodic hopping matrix} is, by definition, a matrix with elements of the following form
\bee
\label{eq_quasi_1}
\Phi_{\bm\bn}(x)=\varphi_{\bm-\bn}(x+(\bm+\bn)\cdot\omega/2),\quad \bm,\bn\in \Z^d,
\ene
where $\varphi_{\bm}\colon \mathbb R\to \mathbb C$ are Lipschitz $1$-periodic functions, satisfying the self-adjointness condition:
$$
\varphi_{\bm}=\overline{\varphi_{-\bm}}.
$$
Let also
$$
\|\varphi\|_{\infty}=\sup_{\bk} \|\varphi_{\bk}\|_{\infty},\quad\|\varphi'\|_{\infty}=\sup_{\bk} \|\varphi'_{\bk}\|_{\infty}.
$$
Define $\range(\Phi)$ to be the smallest number $L\ge 0$ such that $\Phi_{\bn\bm}\equiv 0$ for $|\bm-\bn|>L$. We will only consider hopping matrices of finite range. Note that \eqref{eq_quasi_1} can be reformulated as the following covariance property:
$$
\Phi_{\bm+\ba,\bn+\ba}(x)=\Phi_{\bm\bn}(x+\ba\cdot\omega),\quad \bm,\bn,\ba\in \Z^d.
$$
Fix some $R\in \mathbb N$, and suppose that $\Phi^1,\Phi^2,\ldots$ is a family of quasiperiodic hopping matrices with $\range(\Phi_k)\le k R$, defined by a family of functions $\varphi^1_{\bm}, \varphi^2_{\bm}, \ldots$. A more general class of operators we would like to consider will be of the following form:
\bee
\label{eq_longrange_def}
H=V+\varepsilon\Phi^1+\varepsilon^2\Phi^2+\ldots,\quad 0\le \varepsilon<1,
\ene
where, as previously,
$$
(V\psi)_{\bn}=V_{\bn}\psi_{\bn}=f(x_0+\bn\cdot\omega)\psi_{\bn}.
$$
One can easily check that, assuming 
$$
\|\varphi\|_{\infty}=\sup_{j}\|\varphi^j\|_{\infty}=\sup_{j,\bm}\|\varphi^j_{\bm}\|_{\infty}<+\infty,\quad 0\le\varepsilon<1,
$$ 
the part $\varepsilon\Phi^1+\varepsilon^2\Phi^2+\ldots$ defines a bounded operator on $\ell^2(\Z^d)$. Similarly to the previous sections, we are dealing with the equations
$$
(V+\ep \Phi^1+\ep^2\Phi^2+\ldots)(\psi_0+\ep\psi_1+...)=(\la_0+\ep\la_1+...)(\psi_0+\ep\psi_1+...),
$$
and the initial conditions
$$
\lambda_0=V_{\bze}=f(x_0),\quad \psi_0=e_{\bze},\quad \psi_j\perp\psi_0\,\,\text{for}\,\,j>0.
$$
Formal equalizing of terms at $\varepsilon^k$ leads to
$$
V \psi_k+\Phi^1\psi_{k-1}+\ldots+\Phi^{k-1} \psi_1+\Phi^k e_{\bze}=\lambda_0\psi_k+\lambda_1\psi_{k-1}+\lambda_2\psi_{k-2}+\ldots+\lambda_{k-1}\psi_1+\lambda_k e_{\bze}
$$
which, after projecting onto $\mathrm{span}\{e_{\bze}\}$ and $\mathrm{span}\{e_{\bze}\}^{\perp}$, is reduced to
\bee
\label{eq_psik_infrange}
\psi_k=(V_{\bze}-V)^{-1}\left(\Phi^1\psi_{k-1}+\ldots+\Phi^{k-1} \psi_1+\Phi^k e_{\bze}-\lambda_1 \psi_{k-1}-
\lambda_2\psi_{k-2}-\ldots-\lambda_{k-1}\psi_1\right);
\ene
\bee
\label{eq_lak_infrange}
\lambda_{k}=\langle \Phi^1 \psi_{k-1}+\Phi^2\psi_{k-2}+\ldots+\Phi^{k-1} \psi_1,e_{\bze}\rangle.
\ene
In order to establish an analogue of Theorem \ref{th_lak}, one needs to construct an appropriate version of the graph $\Gamma$. It will formalize the following construction: each time a path makes a jump between two vertices, it can ``choose'' which term of the kinetic energy will be used to perform that jump. Naturally, the use of the term $\Phi^j$ will cost $\ep^j$; in other words, the corresponding graph edge will contribute $j$ to the length of the path. In the construction from Section 3, all edges had length $1$.
\begin{itemize}
	\item Take $\Phi=\Phi^j$ and denote the graph constructed in Section 3.1 using only that $\Phi$, by $\Gamma_j=(\mathcal E,\mathcal V_j)$; here $\mathcal E$ is the set of vertices (same for each $\Gamma_j$), and $\mathcal V_j$ is the set of edges. Let
$$
\Gamma:=(\mathcal E,\bigsqcup_j \mathcal V_j),
$$
where $\bigsqcup$ denotes the disjoint union. In other words, $\Gamma$ is a graph on the same set of vertices as each of the $\Gamma_j$, and includes all edges of each $\Gamma_j$, considered separately (thus obtaining a multi-graph). To each edge of $\Gamma$, we associate two numbers: the length (an integer number $j$ for an edge inherited from $\Gamma^j$) and the weight (defined in Section 3.1 with $\Phi=\Phi^j$). We will also be following the convention from Remark \ref{rem_zero_weights} for future convenience.

	\item As a result, a path in the graph $\Gamma$ is no longer determined by a sequence of vertices, and requires additional data: for each jump (by a jump we will mean moving from one vertex in our sequence to the next one), one needs to specify the length of the edge used for that jump. Any edge of $\Gamma$ is uniquely determined by its length and starting and ending vertices.
	\item The loop translation is now defined in a way that, whenever a piece of a path gets translated, every edge gets translated into an edge of the same length as it was before translation. The convention in Remark \ref{rem_zero_weights} guarantees that the translated path will exist. However, since $\Phi^j_{\bm\bn}$ depends on $x$ and is no longer translation invariant, a path with a non-zero contribution may be translated into a path with zero contribution.
\end{itemize}
If $\CP$ is an eigenvalue/eigenvector path on $\Gamma$, we will denote by $|\CP|$ the total length of edges of $\CP$. With the above conventions in place, one can reformulate the main results of Section 2:
$$
\la_s=\sum_{\CP,|\CP|=s} \cont(\CP),
$$
where the sum is considered over all eigenvalue loop configurations $\CP$ (with $|\CP|=s$), and a similar expression for the eigenvector
$$
(\psi_s)_{\bk}=\sum_{\CP,|\CP|=s} \cont(\CP),\,\, \bk\neq 0,\,\, s>0;
$$
The definition of a safe loop follows Definition \ref{def_safe_loop}, with the addition that the number of jumps is replaced by the total length of the edges. In the definition of $\cont(\CP,t)$, in addition to replacing $V_{\bn}=f(x_0+\bn\cdot\omega)$ by $V_{\bn}(t)=f(x_0+t+\bn\cdot\omega)$ for $\bn\neq \bze$, we also replace $\Phi^j_{\bm\bn}(x_0)$ by 
$$
\Phi^j_{\bm\bn}(x_0,t):=\Phi^j_{\bm\bn}(x_0+t)=\varphi_{\bm-\bn}(x_0+t+(\bm+\bn)\omega/2).
$$

%We will say that $f$ and $\{\varphi_{\bn}:\bn\in\mathbb Z^d\}$ are {\it uniformly $C_{\mathrm{reg}}$-regular} if, in the notation of Section 3,
%\begin{itemize}
%\item[(ucr0)]The pre-image $f^{-1}((f(x_0)-2,f(x_0)+2))\cap (-1/2,1/2)$ is an open interval (denoted by $(a,b)$), and $\left.f\right|_{(a,b)}$ is a one-to-one map between $(a,b)$ and $(f(x_0)-2,f(x_0)+2)$.
%\item[(ucr1)]Let $D_{\min}(x_0)=\inf\limits_{x\in (a,b)}f'(x)$ (for points where $f'$ does not exist, consider the smallest of the derivative numbers). Then, assume 
%\bee
%\label{eq_ucr1}
%D_{\min}(x_0)\le f'(x)\le C_{\mathrm{reg}} D_{\min}(x_0),\quad \forall x\in (a,b).
%\ene
%\item[(ucr2)]Define $(a_1,b_1)=f^{-1}((f(x_0)-1,f(x_0)+1))\subset (a,b)$, and
%$$
%g_{y,\bm}(x)=\frac{\varphi_{\bm}(x+y)}{f(x)-f(x_0)}, \quad x\in (b_1,a_1+1),
%$$
%extended by continuity to $g(\pm 1/2)=0$. Similarly to (cr2), we require
%$$
%|g_{y,\bm}'(x)|\le C_{\mathrm{reg}} D_{\min},\quad x\in (b_1,a_1+1).
%$$
%uniformly in $y$ and $\bm\in\Z^d$.
%\end{itemize}

The results of Section 4 can be repeated without significant changes. It is helpful to observe the following: one can treat a jump from $\bm$ to $\bn$, say, with $\dist_{\Phi}(\bm,\bn)=3$, as three jumps of distance $1$, where only one of these jumps makes an actual contribution to $\cont(\CP)$, and the contributions of two other jumps are equal to $1$. The resulting bounds are only better (since there are less denominators to care about). Since $\varphi$ is no longer a constant, each time we are estimating a derivative we have a choice of differentiating one of the $\varphi$ factors. As a consequence, the factor $\|\varphi\|_{\infty}$ in front has to be replaced by $\|\varphi\|_{\infty}+\|\varphi'\|_{\infty}$. An extra combinatorial factor can be absorbed into $C_{\dist}^{k}$.
\begin{prop}
\label{prop_nonlocal_step}
Let $\CP$ be an eigenvalue/eigenvector loop stack with $k$ edges. Suppose, $f$ is $C_{\mathrm{reg}}$-regular at $x_0$ and $\CL$ is the base loop of $\CP$. 
Denote $\mbe$ by $\eqref{eq_m_definition}$, and let
$$
|t|\le  \frac14 \min_{\bn\in \CL}\|\bn\cdot\omega\|,\quad 0<\beta< \beta_{\max}(\dist_{\varphi},C_{\dio},\tau_{\dio},C_{\safe}).
$$
Then
\begin{enumerate}
	\item $$|\cont([\CP],t)|\le C_{R}^{|\CP|}(\|\varphi\|_{\infty}+\|\varphi'\|_{\infty})^k
	\left(\frac{4\gamma}{D_{\min}}\right)^{\denominators(\CP,\mbe)} 
\beta^{-\totallevel(\CP,\mbe)-\denominators(\CP,\mbe)}\times
$$
$$
\times C_{\reg}^{\downedges(\CP,\mbe)}\left(\frac{4}{D_{\min}}\right)^{\nbloops(\CP,\mbe)}
\beta^{-\nblevel(\CP,\mbe)- \nbloops(\CP,\mbe)}.
$$
\item The function $\cont([\CP],\cdot)$ is Lipschitz continuous with the bound on the derivative
$$\left|\frac{d}{dt}\cont([\CP],t)\right|\le C_{\reg} \max\left\{\frac{4\gamma}{\varepsilon^{\level(\CL)+\delta_{\mathrm{lev}}}},D_{\min}\right\}\cdot(\text{r.h.s of }(1)).
$$
\end{enumerate}
\end{prop}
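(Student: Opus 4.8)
The plan is to prove this exactly as Theorem~\ref{th_loop_stack}, by a simultaneous induction on $\height(\CP)$ of both statements, keeping track only of the finitely many places where the long-range, $x$-dependent structure of the hopping intervenes. The base case $\height(\CP)=0$ is the analogue of Lemma~\ref{lemma_safe_loop_contrib} for a single safe loop: the key observation is that an edge of length $j$, carrying the weight built from $\Phi^j_{\bm\bn}(x_0,t)$, may be formally replaced by $j$ consecutive unit edges exactly one of which carries the true weight (bounded by $\|\varphi\|_\infty$) while the remaining $j-1$ carry weight $1$. After this surgery the intermediate ``sites'' contribute no denominators, and the denominator bookkeeping ($\den$, $\totallevel$, $\level$, and so on) becomes identical to the constant nearest-neighbour case treated in Section~4, so Lemma~\ref{lemma_safe_loop_contrib} transfers verbatim. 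Here $\|\varphi\|_\infty$ raised to the number of edges denotes a product of the $\|\varphi^j\|_\infty$ over the edges of the loop, absorbed into $\|\varphi\|_\infty^{k}$, and the combinatorial cost of choosing which $\Phi^j$ realises each jump is absorbed into $C_R^{|\CP|}$ (the range parameter $R$, rather than only the distance function, now enters the combinatorial constant).

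For the inductive step I would again use that a loop translation is the substitution $t\mapsto t+\bn_j\cdot\omega$ in \emph{every} factor, now including the hopping factors $\Phi^j_{\bm\bn}(x_0,t)$ by the covariance property \eqref{eq_quasi_1}; hence the product formula \eqref{eq_prod} holds without change. Decomposing $\CP$ into its base loop $\CL$ and the maximal short loop stacks $\CP_j$ attached at small denominators $\bn_j$, one estimates each attachment block $(V_{\bze}-V_{\bn_j}(t))^{-1}\bigl(\cont([\CP_j],t+\bn_j\cdot\omega)-\cont([\CP_j],t)\bigr)$ exactly as in Theorem~\ref{th_loop_stack}: if $\level(\bn_j)<\mbe$ the denominator is $\gtrsim 1$ and one uses the triangle inequality together with the induction hypothesis~(1); if $\level(\bn_j)\ge\mbe$ one uses the induction hypothesis~(2) and cancels the factor $\|\bn_j\cdot\omega\|$ against $|V_{\bze}-V_{\bn_j}(t)|^{-1}\sim (D_{\min}\|\bn_j\cdot\omega\|)^{-1}$, picking up a single $C_{\reg}$ together with whichever argument of the $\max$ dominates. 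The prerequisite $\|\bn_j\cdot\omega\|\le\frac18\min_{\bn\in\CL_j}\|\bn\cdot\omega\|$ needed to invoke the induction on $\CP_j$ is verified word for word as in \eqref{eq_appropriate_bounds}--\eqref{eq_h2_bound}, using that $\CL_j$ is short and the level bound \eqref{eq_l_bound}. Counting how many attachments fall into the second case yields $\downedges(\CP,\mbe)$ copies of $C_{\reg}$ and $\nbloops(\CP,\mbe)$ copies of the ``bad'' argument of the $\max$, which reassembles into claim~(1); the factor $\gamma$ in $4\gamma/D_{\min}$ simply records the change of scale between $\beta$ and $\varepsilon$ (and the range parameter) fixed earlier in this section.

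The only genuinely new point, and the one I expect to require the most care, is the Lipschitz estimate~(2) when $\varphi$ varies with $x$. Differentiating $\cont(\CP,t)$ in $t$ now offers, at every edge, the additional possibility of hitting a $\varphi^j$ factor rather than a denominator; since each $\varphi^j$ is Lipschitz with $\|(\varphi^j)'\|_\infty\le\|\varphi'\|_\infty$, such a differentiation costs $\|\varphi'\|_\infty$ in place of $\|\varphi\|_\infty$, which is precisely why the prefactor must be upgraded to $(\|\varphi\|_\infty+\|\varphi'\|_\infty)^{k}$. The enlarged number of summands produced by the product rule is again bounded by a slowly growing power (as in Theorem~\ref{th_loop_stack}, by at most $(5k)^{\height(k)}\le 16^k$ once the near-logarithmic growth of $\height(k)$ from \eqref{eq_l_bound} is used) and is absorbed into $C_R^{|\CP|}$. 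What must be checked, and what makes the bound survive, is that the cancellation is not destroyed by the moving $\varphi$: when a short stack $\CP_j$ is translated, the differences of its $\varphi^j$ factors are $O(\|\varphi'\|_\infty\,\|\bn_j\cdot\omega\|)$, exactly the order needed to be absorbed by $|V_{\bze}-V_{\bn_j}(t)|^{-1}$, so each descending edge still contributes only one $C_{\reg}$ and never a factor of $D_{\min}$ in the numerator. Finally, as in Theorem~\ref{th_loop_stack}, in bounding the derivative one always differentiates through the loop sitting at the lowest level, since it carries the smallest denominator; the contributions from differentiating the other loops are additive and absorbed into $C_R^{|\CP|}$. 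This produces the factor $\max\{4\gamma\,\varepsilon^{-\level(\CL)-\delta_{\mathrm{lev}}},\,D_{\min}\}$ in front of the right-hand side of~(1) and completes the induction.
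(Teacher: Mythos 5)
Your proposal is correct and follows essentially the same route the paper takes (the paper's ``proof'' is really the short discussion preceding the proposition plus the remark after it): reduce a length-$j$ edge to $j$ unit edges with only one carrying the actual $\varphi$-weight, re-run the height induction from Theorem~\ref{th_loop_stack} using the product formula \eqref{eq_prod} and the covariance of $\Phi^j$, replace $C_{\dist}$ by $C_R$ and $\|\varphi\|_\infty^{|\CP|}$ by $(\|\varphi\|_\infty+\|\varphi'\|_\infty)^k$ to account for derivatives hitting $\varphi$-factors. You fill in a few things the paper leaves implicit, most usefully the check that the translated $\varphi$-factors contribute a difference of order $\|\varphi'\|_\infty\|\bn_j\cdot\omega\|$, so the cancellation mechanism at descending edges survives the $x$-dependence of the hopping; the paper only asserts this implicitly.
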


\begin{rem}
The combinatorial factor now depends only on $R$, since the distance function and the range of $\Phi^j$ is defined by $R$. Note that, despite multiple choice of edges, the total number of paths of length $|\CP|$ with non-zero contributions is still bounded by $C_R^{|\CP|}$. The power of $\|\varphi\|_{\infty}$ now depends only on $k$, which is the number of edges. The fact that one also needs to differentiate $\varphi$ only affects the result by adding $\|\varphi'\|_{\infty}$ into the factor $(\|\varphi\|_{\infty}+\|\varphi'\|_{\infty})$, since differentiating $\varphi$ means that we are using $\|\varphi'\|_{\infty}\cdot(\text{r.h.s of }(1))$ instead of $(\text{r.h.s of }(2))$. These possible extra $\|\varphi'\|_{\infty}$ can be absorbed into the factors in front of (1).
\end{rem}
\begin{rem}
The reader can easily formulate appropriate versions of the results of Section 4, starting from Theorem \ref{main}. The only differences are the inclusion of $\|\varphi'\|_{\infty}$ and different notation for the distance function.
\end{rem}
\subsection{The case of monotone potentials with small derivatives}
The results of Section 6.1 can easily be extended to the case where the terms $\Phi^j$ of \eqref{eq_longrange_def} have some additional dependence on $\ep$, as long as the estimates of $\|\varphi\|_\infty$ and $\|\varphi'\|_{\infty}$ are uniform in $\ep$ in the range of parameters under consideration. The same arguments can be applied to the dependence of $f$ on $\ep$ if one considers both upper and lower bounds on the derivatives described in the definitions of regularity in Section 5.1. In particular, we always assume that $D_{\min}\ge 1$ (although rescaling allows to replace $1$ by any positive $\ep$-independent quantity). In some of the applications, these assumptions are not flexible enough. In particular, the operators considered in Section 7, for which $f$ is originally not strictly monotone, can be transformed into those with monotone $f$, but the derivative will only admit a lower bound by $c\varepsilon^2$ on some intervals. The bounds outlined above, essentially, contain a factor $D_{\min}^{-k}$, leading to a coefficient of the order $\ep^{-2k}$ at $\ep^k$, not sufficient to guarantee convergence of the series for such $f$. In addition to that, $C_{\reg}$ will also contain negative powers of $\ep$.

In order to deal with these problems, we introduce some refinements to our general scheme which we will now describe. They will involve additional restrictions on the hopping terms and $f$. Assume the following:

\begin{itemize}
	\item[(sing0)]$I_1,\ldots,I_k\subset (-1/2,1/2)$ are open intervals whose closures are mutually disjoint.
	\item[(sing1)] $f$ satisfies (f1) from Section 3. Additionally, $f$ is $C_{\reg}$-regular on $(-1/2,1/2)\setminus (I_1\cup\ldots\cup I_k)$ with $D_{\min}(x_0)\ge 1$ for all $x_0\in (-1/2,1/2)\setminus (I_1\cup\ldots\cup I_k)$. For each $I_j=(a_j,b_j)$, let $D_{j,+}:=\max\{D_{\min}(a),D_{\min}(b)\}$, $D_{j,-}:=\min\{D_{\min}(a),D_{\min}(b)\}$.
	\item[(sing2)] For some $\delta>0$, we have $c_{j,-} \delta^{\mu_j}\le f'(x)\le c_{j,+}$ on $I_j$. As usual, we assume that the inequalities hold for all derivative numbers in the case $f$ is not differentiable. As a consequence, $f$ is strictly monotone on $(-1/2,1/2)$.
	\item[(sing3)] For each $x_0\in I_j$, we have (cr2) with $D_{\min}(x_0)$ replaced by $c_{j,+}$.
\end{itemize}
Since $f$ is regular on $(-1/2,1/2)\setminus (I_1\cup\ldots\cup I_k)$, the results of the previous subsection apply for $x_0\in (-1/2,1/2)\setminus (I_1\cup\ldots\cup I_k)$ outside of these intervals. 

If $x_0\in I_j$, we can estimate the denominators by considering three cases:
\begin{enumerate}
	\item {\it Singular} small denominators: $t+x_0\in I_j$, $t+x_0+\bn\cdot\omega\in I_j$ for all $t$ under consideration. In this case, we have to use $c_{j,-}\delta^{\mu_j}$ as the lower bound for the derivative of $f$, and can use $c_{j,+}$ as an upper bound.
	\item {\it Regular} small denominators: $x_0\in I_j$, but either $t+x_0$ or $t+x_0+\bn\cdot\omega$ is not in $I_j$, for some $t$ under consideration. In this case, while $f$ is not regular at $x_0$, we can treat $x_0$ as a small perturbation of $t+x_0$ or $t+x_0+\bn\cdot\omega$. These denominators can be considered in the same way as the regular case, under some modifications of the constants: since some of the points are outside of $I_j$, one cannot use $c_{j,+}$ as an upper bound on the derivative but can use $C_{\reg}D_{j,+}$ as an upper bound and $D_{j,-}$ as a lower bound.
	\item For non-small denominators, in view of (sing3), one can use $C_{\reg}c_{j,+}$ as an upper bound on the derivative; lower bounds are not needed since the absolute value of the denominator is at least $1$.
\end{enumerate}
For simplicity, let us also assume that $\beta$ is chosen to be small enough so that all denominators of level $1$ or higher are actually small. In other words, $M_{\beta}(x_0)=0$ for all $x_0\in I_j$. We will drop $M_{\beta}$ from the notation: for example, $\den(\CL)$ will denote $\den(\CL,0)$. For a loop $\CL$, denote by $\singden(\CL)$ the total number of singular small denominators visited by $\CL$.
\begin{lem}
\label{lemma_safe_loop_contrib_smallderivative}
Under the above assumptions, let $\CL$ be an eigenvalue/eigenvector loop with $k$ edges. Let
$$
x_0\in I_j,\quad |t|\le \frac14 \min_{\bn\in \CL}\|\bn\cdot\omega\|.
$$
%Denote
%$$
%M=\frac{\log(4\gamma)-\log D_{\min}}{\log\varepsilon}-\dlev.
%$$
Then
\begin{enumerate}
	\item $|\cont(\CL,t)|\le \|\varphi\|_{\infty}^{k} \left(\frac{4}{D_{j,-}}\right)^{\denominators(\CL)} \beta^{-\totallevel(\CL)-\denominators(\CL)}\left(\frac{D_{j,-}}{\delta^{\mu_j} c_{j,-}}\right)^{\singden(\CL)}$.
	\item The function $\cont(\CL,\cdot)$ is Lipschitz continuous:
$$
\left|\frac{d}{dt}\cont({\CL,t})\right|\le|\CL|\cdot \max\left\{\frac{4\delta^{-\mu_j}c_{j,+}}{c_{j,-}\beta^{\level(\CL)+1}},\frac{4C_{\reg}D_{j,+}}{D_{j,-}\beta^{\level(\CL)+1}},C_{\reg}c_{j,+},\right\}\cdot(\text{r.h.s of }(1)).
$$
\end{enumerate}
\end{lem}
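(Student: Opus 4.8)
The plan is to imitate the proof of Lemma~\ref{lemma_safe_loop_contrib} essentially verbatim, the only new ingredient being a replacement for the single-denominator estimates of Lemma~\ref{lemma_single_denominator} that distinguishes the three types of denominators listed just before the statement. So the first step is to record, for a fixed lattice point $\bn\neq\bze$ and $|t|\le\frac14\|\bn\cdot\omega\|$ (so that $\|t+\bn\cdot\omega\|\ge\frac34\|\bn\cdot\omega\|\ge\frac34\beta^{\level(\bn)+1}$ by the leveling of Theorem~\ref{th_consistent_diophantine}), the bounds on $|V_{\bze}-V_{\bn}(t)|^{-1}$ and on $|V_{\bn}'(t)(V_{\bze}-V_{\bn}(t))^{-2}|$ in each case: for a non-small denominator, bound the reciprocal by $1$ and use (cr2) with $D_{\min}(x_0)$ replaced by $c_{j,+}$ as in (sing3), getting $C_{\reg}c_{j,+}$ for the derivative term; for a \emph{regular} small denominator, run the computation of Lemma~\ref{lemma_single_denominator} with $D_{j,-}$ as the lower and $C_{\reg}D_{j,+}$ as the upper slope bound; for a \emph{singular} small denominator, run the same computation with $c_{j,-}\delta^{\mu_j}$ as the lower and $c_{j,+}$ as the upper slope bound. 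Rewriting the singular-case reciprocal bound $4/(c_{j,-}\delta^{\mu_j}\beta^{\level(\bn)+1})$ as $\tfrac{4}{D_{j,-}}\beta^{-\level(\bn)-1}\cdot\tfrac{D_{j,-}}{\delta^{\mu_j}c_{j,-}}$ is exactly what produces the extra factor $(D_{j,-}/(\delta^{\mu_j}c_{j,-}))^{\singden(\CL)}$ in claim (1).

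For claim (1) I would then expand $\cont(\CL,t)$ as the product of the weights of the $k$ edges of $\CL$: each edge contributes a hopping factor $\le\|\varphi\|_{\infty}$, and the denominators appearing are precisely the small denominators visited by $\CL$, counted with multiplicity (no descending edges occur, as $\CL$ is a loop). Plugging in the case-by-case reciprocal bounds from the first step and collecting powers of $\beta$, of $4/D_{j,-}$, and of $D_{j,-}/(\delta^{\mu_j}c_{j,-})$ over all visited denominators gives exactly the right-hand side of (1). For claim (2) I would differentiate this product by the Leibniz rule; since here the hopping factors are $t$-independent, each term of the derivative replaces one reciprocal $1/(V_{\bze}-V_{\bn}(t))$ by $V_{\bn}'(t)/(V_{\bze}-V_{\bn}(t))^2$, which is bounded by the appropriate one of the three $\max$-arguments in (2) times that reciprocal's own bound from the first step. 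Bounding $\level(\bn)$ by $\level(\CL)$, pulling the $\max$ out in front, and summing over the at most $|\CL|$ differentiated factors reassembles $|\CL|\cdot\max\{\dots\}\cdot(\text{r.h.s. of }(1))$.

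I expect the only non-mechanical point to be the regular case in the first step: one must justify that, although $f$ need not be $C_{\reg}$-regular at $x_0$ when $x_0\in I_j$, the relevant difference quotient $(f(x_0)-f(t+x_0+\bn\cdot\omega))/\|t+\bn\cdot\omega\|$ is nonetheless pinched between $D_{j,-}$ and $C_{\reg}D_{j,+}$ up to harmless constants. The argument should be that the far endpoint $t+x_0+\bn\cdot\omega$ has crossed the boundary of $I_j$ at $a_j$ or $b_j$, where $f$ \emph{is} $C_{\reg}$-regular, so monotonicity of $f$ together with $C_{\reg}$-regularity at that boundary point controls the quotient; a displacement of $x_0$ by at most $\frac14\|\bn\cdot\omega\|$ does not spoil this, and the subcase in which the two endpoints leave $I_j$ on opposite sides is even easier, since then $|f(x_0)-f(t+x_0+\bn\cdot\omega)|$ is bounded below by an absolute constant. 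Everything else is the same bookkeeping as in Lemma~\ref{lemma_safe_loop_contrib}.
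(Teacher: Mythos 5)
Your proposal mirrors the paper's own proof: the paper likewise splits denominators into the three types described in the pre‑lemma discussion, replaces $D_{\min}$ by $c_{j,-}\delta^{\mu_j}$ for singular ones (producing the replacement factor), treats regular small denominators with $D_{j,-}$ and $C_{\reg}D_{j,+}$, uses (sing3) for non‑small ones, and then obtains (1) by multiplying over the edges and (2) by the Leibniz rule exactly as in Lemma~\ref{lemma_safe_loop_contrib}. Your closing paragraph about the regular case corresponds to the paper's informal remark that ``$x_0$ can be treated as a small perturbation of $t+x_0$ or $t+x_0+\bn\cdot\omega$,'' so the reasoning is the same.
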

\begin{proof}As described above, we consider three types of denominators. In the case of estimating (1), non-small denominators do not give any contribution (we use a lower bound by $1$ for each of them). The ``regular'' small denominators are treated in the same way as in the regular case, with the same contributions. In the case of singular small denominators, we replace $D_{j,-}$ by $(\delta^{c_{j,-}})^{\mu_j}$ which produces the last factor in (1) (``replacement factor'').

In the case of (2), we argue similarly to Lemma \ref{lemma_safe_loop_contrib}; that is, a differentiation would add an extra factor $|\CL|\left|\frac{V_{\bn}'(t)}{V_{\bze}-V_{\bn}(t)}\right|$ in the upper bound, and we estimate the ratio in the worst possible case, again, by considering the three types of denominators.
\end{proof}
In order to make further estimates less cumbersome, we assume that $\beta$ and $\delta$ are small enough, depending on $C_{\reg},D_{j,\pm},c_{j,\pm}$, so that in the conclusion (2) of Lemma \ref{lemma_safe_loop_contrib_smallderivative} we have
\bee
\label{eq_assumptions}
\frac{4\delta^{-\mu_j}c_{j,+}}{c_{j,-}\beta^{\level(\CL)+1}}\ge \frac{4C_{\reg}D_{j,+}}{D_{j,-}\beta^{\level(\CL)+1}}\ge C_{\reg}c_{j,+}.
\ene
It will be convenient to introduce some additional notation.
\begin{itemize}
	\item Let $\singdownedges(\CP)$ denote the number of descending edges of $\CP$ which lead to a singular small denominator.
	\item Let also $\singden(\CP)$ be the total number of singular small denominators visited by $\CP$.
\end{itemize}
\begin{thm}
\label{thm_smallderivative_loop}
Under the above assumptions on $f$ and $\Phi$, let $\CP$ be an eigenvalue/eigenvector loop stack with $k$ edges. Suppose that $x_0\in I_j$ and $\CL$ is the base loop of $\CP$. Let
$$
|t|\le  \frac14 \min_{\bn\in \CL}\|\bn\cdot\omega\|,\quad 0<\beta< c(\dist_{\varphi},C_{\dio},\tau_{\dio},C_{\safe},c_{j,\pm},D_{j,\pm},C_{\reg})
$$
$$
0<\delta<\delta_0(\dist_{\varphi},C_{\dio},\tau_{\dio},C_{\safe},c_{j,\pm},D_{j,\pm},C_{\reg}).
$$ 
Then we have
 $$|\cont([\CP],t)|\le C_{R}^{|\CP|}(\|\varphi\|_{\infty}+\|\varphi'\|_{\infty})^k
	\left(\frac{4}{D_{j,-}}\right)^{\denominators(\CP)}
\beta^{-\totallevel(\CP)-\denominators(\CP)}\times \left(\frac{D_{j,-}}{\delta^{\mu_j} c_{j,-}}\right)^{\singden(\CP)}\times
$$
$$
\times \left(\frac{4 C_{\reg} D_{j,+}}{D_{j,-}^2}\right)^{\downedges(\CP)}
\beta^{-\nblevel(\CP)- \downedges(\CP)}\left(\frac{c_{j,+}D_{j,-}^2}{\delta^{2\mu_j} c_{j,-}^2 D_{j,+}}\right)^{\singdownedges(\CP)}.
$$
%\item The function $\cont([\CP],\cdot)$ is Lipschitz continuous with the bound on the derivative
%$$\left|\frac{d}{dx}\cont([\CP],x)\right|\le C_{\reg} \max\left\{\frac{4\gamma}{\varepsilon^{\level(\CL)+\delta_{\mathrm{lev}}}},D_{\min}\right\}\cdot(\text{r.h.s of }(1)).
%$$
\end{thm}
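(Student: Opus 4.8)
\emph{Proof plan.} The plan is to run the same double induction on $\height(\CP)$ used for Theorem~\ref{th_loop_stack} and Proposition~\ref{prop_nonlocal_step}, now carrying the singular small denominators as a separate accounting category. The base case $\height(\CP)=0$ is precisely Lemma~\ref{lemma_safe_loop_contrib_smallderivative}: the first ``safe'' line of the asserted bound, including the replacement factor $(D_{j,-}/(\delta^{\mu_j}c_{j,-}))^{\singden(\CP)}$, is part~(1) of that lemma (with $\mbe=0$, since we have arranged $M_\beta(x_0)=0$ on $\bigcup_j I_j$). For the inductive step I would split $\CP$ into its base loop $\CL$ (of the eigenvalue or eigenvector type of $\CP$) and the attached eigenvalue loop stacks $\CP_1,\dots,\CP_m$, with $\CP_i$ sitting at a small denominator $\bn_i$ on $\CL$, and use the product identity \eqref{eq_prod}, together with the additivity of $\den$, $\totallevel$, $\nblevel$, $\downedges$, $\singden$ and $\singdownedges$ over this decomposition, exactly as in the displayed identities in the proof of Theorem~\ref{th_loop_stack}.

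First I would analyze each attachment factor $(V_{\bze}-V_{\bn_i}(t))^{-1}(\cont([\CP_i],t+\bn_i\cdot\omega)-\cont([\CP_i],t))$ by the dichotomy used before, refined by whether $\bn_i$ is a non-small, a regular small, or a singular small denominator (the three cases of the discussion preceding Lemma~\ref{lemma_safe_loop_contrib_smallderivative}). If the lower bound on $|V_{\bze}-V_{\bn_i}(t)|$ available there is $\ge 1$, bound the difference by the sum of absolute values, apply induction hypothesis~(1), and absorb the resulting factor of $2$ into $C_R^{|\CP|}$. If the denominator is genuinely small, apply induction hypothesis~(2) to $\CP_i$, bound $|\cont([\CP_i],t+\bn_i\cdot\omega)-\cont([\CP_i],t)|$ by $\|\bn_i\cdot\omega\|$ times the derivative bound, and cancel $\|\bn_i\cdot\omega\|$ against the lower bound for $|V_{\bze}-V_{\bn_i}(t)|$, which is $\gtrsim D_{j,-}\|\bn_i\cdot\omega\|$ for a regular attachment and $\gtrsim c_{j,-}\delta^{\mu_j}\|\bn_i\cdot\omega\|$ for a singular one. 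Since by \eqref{eq_assumptions} the three-way $\max$ in Lemma~\ref{lemma_safe_loop_contrib_smallderivative}(2) always resolves to the expected branch, this produces a factor of size $\sim 4C_{\reg}D_{j,+}/D_{j,-}^2$ per descending edge in the generic case (recorded by $\downedges(\CP)$), and an extra $\delta^{-2\mu_j}$ — one power from $|V_{\bze}-V_{\bn_i}(t)|^{-1}$ and one from differentiating a loop whose lowest-level denominator is singular — exactly when the descending edge lands on a singular denominator (recorded by $(c_{j,+}D_{j,-}^2/(\delta^{2\mu_j}c_{j,-}^2 D_{j,+}))^{\singdownedges(\CP)}$). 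As in Theorem~\ref{th_loop_stack}, it suffices to always differentiate the loop at the lowest level, since it carries the smallest denominator; the contributions of differentiating the other loops, the $2m+1$ factors from the Leibniz rule, and the $\|\varphi'\|_\infty$ terms from differentiating the hopping functions (cf.\ Proposition~\ref{prop_nonlocal_step}, which replaces $\|\varphi\|_\infty$ by $\|\varphi\|_\infty+\|\varphi'\|_\infty$) are all additive and absorbable into $C_R^{|\CP|}$, using that $\height(k)$ grows extremely slowly (Lemma~\ref{lemma_l_h_bounds}, Remark~\ref{rem_multilog}).

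Next I would verify the prerequisites for applying the induction hypotheses to each $\CP_i$, namely $|t|\le\tfrac14\min_{\bn\in\CL_i}\|\bn\cdot\omega\|$ where $\CL_i$ is the base loop of $\CP_i$. Since $|t|\le\tfrac14\|\bn_i\cdot\omega\|$, this follows exactly as in \eqref{eq_appropriate_bounds}--\eqref{eq_h2_bound} from the Diophantine leveling of Theorem~\ref{th_consistent_diophantine} and the shortness bound $|\CL_i|<\safedist(\bn_i)$ built into the loop-stack structure, which gives $\level(\CL_i)\le\maxlevel(\safedist(\bn_i))\ll\level(\bn_i)$ via \eqref{eq_l_bound} once $\beta$ (hence $C_{\safe}$) is small. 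The remaining conditions are finitely many smallness requirements: smallness of $\beta$ for the consistency of Theorem~\ref{th_consistent_diophantine} and to keep $\height(k)$ slowly growing; smallness of $\beta$ and $\delta$ to force $M_\beta(x_0)=0$ on $\bigcup_j I_j$; and smallness of $\beta,\delta$ depending on $C_{\reg},D_{j,\pm},c_{j,\pm}$ to secure \eqref{eq_assumptions}. These are compatible, which yields the thresholds in the statement.

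The main obstacle is the bookkeeping of the powers of $\delta^{-\mu_j}$: one must show that a singular small denominator enters the safe part only through $\singden$ (i.e.\ with the single power of $\delta^{-\mu_j}$ of Lemma~\ref{lemma_safe_loop_contrib_smallderivative}), and that a cancellation at a singular attachment point costs at most $\delta^{-2\mu_j}$, regardless of how many short loops are stacked there. This is exactly the place where the cancellation mechanism of Remark~\ref{rem_summary3} has to be seen to still turn a potentially divergent geometric series into a single derivative factor when the governing denominator is of size $\sim\delta^{\mu_j}\|\bn_i\cdot\omega\|$ rather than $\sim D_{\min}\|\bn_i\cdot\omega\|$ — the subtlety being that both the attachment denominator $(V_{\bze}-V_{\bn_i}(t))^{-1}$ and the lowest-level derivative factor see the singular bound, which is precisely what the $\singdownedges$ exponent is designed to record. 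A secondary point is keeping $C_{\reg}$ (which enters through (cr1), (cr2), (sing3) and \eqref{eq_assumptions}) $\varepsilon$-independent: the intervals $I_j$ are arranged so that all of the $\delta$-dependence is isolated in the explicit $\delta^{-\mu_j}$ factors and does not leak into $C_{\reg}$ or into the combinatorial constant $C_R$.
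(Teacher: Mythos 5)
Your proposal follows the paper's proof almost exactly: the base case is Lemma~\ref{lemma_safe_loop_contrib_smallderivative}, the inductive step repeats the height induction of Theorem~\ref{th_loop_stack} using the decomposition~\eqref{eq_prod} and the additivity of all the counting functions, and the only change is the bookkeeping that replaces $D_{\min}$ and $C_{\reg}$ by $D_{j,-}$ and $C_{\reg}D_{j,+}/D_{j,-}$ at regular attachment points, and by $c_{j,-}\delta^{\mu_j}$ and $c_{j,+}/(c_{j,-}\delta^{\mu_j})$ at singular ones — which is word for word what the paper does. Your extra detail (verifying the $t$-range prerequisite as in \eqref{eq_appropriate_bounds}--\eqref{eq_h2_bound}, and noting that under $\mbe=0$ the non-small branch of the dichotomy never fires, so $\downedges=\nbloops$) is correct and in the spirit of the paper's "the proof essentially repeats Theorem~\ref{th_loop_stack} with modified constants."
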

\begin{proof}
The proof essentially repeats the proof of Theorem \ref{th_loop_stack}, with modified constants. The first factor comes from considering the individual loops on $\CP$ and applying Lemma \ref{lemma_safe_loop_contrib_smallderivative} to each of them.

The remaining factor comes from considering the attachment factors and the differentiation factors. For example, in the factor $C_{\reg}^{\downedges(\CP,\mbe)}\left(\frac{4}{D_{\min}(x_0)}\right)^{\nbloops(\CP,\mbe)}$ from Theorem \ref{th_loop_stack}, we note that $\downedges(\CP)=\nbloops(\CP)$, replace $D_{\min}(x_0)$ by $D_{j,-}$ and $C_{\reg}$ by $\frac{C_{\reg}D_{j,+}}{D_{j,-}}$, thus obtaining the factor $\left(\frac{4 C_{\reg} D_{j,+}}{D_{j,-}^2}\right)^{\downedges(\CP)}$. Together with $\beta^{-\nblevel(\CP)- \nbloops(\CP)}$, this would complete the bounds in the case of no singular small denominators.

For each singular small denominators, instead of the above procedure, we need to replace $C_{\reg}$ by $\frac{c_{j,+}}{c_{j,-}\delta^{\mu_j}}$, and $D_{\min}$ by $c_{j,-}\delta^{\mu_j}$, which produces the last replacement factor.
\end{proof}
In order to formulate the main result of this section, let us state an additional condition on $f$. We would like to emphasize that $\delta$ is allowed to depend on $\ep$.
\begin{itemize}
	\item[(sing4)] There exists $\beta_{\max}=\beta_{\max}(C_{\reg},I_j,D_{j,\pm},c_{j,\pm},\omega)>0$ such that, for $0<\beta<\beta_{\max}$ there exist 
	$$
	\ep_0=\ep_0(\beta,C_{\reg},I_j,D_{j,\pm},c_{j,\pm},\omega)>0,\quad 0<r=r(\beta,C_{\reg},I_j,D_{j,\pm},c_{j,\pm},\omega)<1
	,$$ 
	$$C=C(\beta,C_{\reg},I_j,D_{j,\pm},c_{j,\pm},\omega)>0
	$$ such that, for any loop stack $\CP$ and for any $j$ we have
	$$
	\delta^{\mu_j\singden(\CP)+2\mu_j \singdownedges(\CP)}\le C^{|\CP|}\ep^{r|\CP|}.
	$$
\end{itemize}
We can now state the last result of this section. Its proof, after the above preparations, repeats the corresponding steps of the proof of Theorem \ref{main}.
\begin{thm}
\label{th_main_singular}
Suppose that $f$ satisfies $(\mathrm{sing}0)$--$(\mathrm{sing}3)$ with $C_{\reg}$, $D_{j,\pm}$, $c_{j,\pm}$, $I_j$ independent of $\ep$. Under the assumptions of $(\mathrm{sing}4)$, and further assuming $$0<\ep<\ep_1(\beta,C_{\reg},I_j,D_{j,\pm},c_{j,\pm},\omega,C,c,\ep_0,r),$$ the perturbation series for $H(x_0)$ converges for all $x_0\in \R\setminus(\Z+1/2+\Z^d\cdot\omega)$.
\end{thm}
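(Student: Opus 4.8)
The plan is to follow the template of the proof of Theorem~\ref{main}, the two new features being a splitting of the phase into the regular part $(-1/2,1/2)\setminus(I_1\cup\cdots\cup I_k)$ and the singular part $I_1\cup\cdots\cup I_k$, and the replacement of Theorem~\ref{th_loop_stack} by Theorem~\ref{thm_smallderivative_loop}. First, $(\mathrm{sing}1)$--$(\mathrm{sing}2)$ make $f$ strictly increasing on $(-1/2,1/2)$, so rational independence of $1,\omega_1,\dots,\omega_d$ gives the algebraic non-resonance condition for $V_{\bn}=f(x_0+\bn\cdot\omega)$ at every lattice point and every admissible $x_0$; hence, by Theorem~\ref{th_lak} in the long-range form of Section~6.1, the coefficients $\lambda_k=\sum_{|\CP|=k}\cont(\CP)$ and $(\psi_k)_{\bn}=\sum_{|\CP|=k}\cont(\CP)$ are well defined. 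I would fix the Diophantine leveling of Theorem~\ref{th_consistent_diophantine} with $\beta$ small and, as in $(\mathrm{sing}4)$, arrange $\mbe\equiv 0$ so that every denominator of level $\ge 1$ is genuinely small; since the series started at $\bn_0$ is the one based at $x_0+\bn_0\cdot\omega$, it suffices to take $\bn_0=\bze$ and an arbitrary base point $x_0$.

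If $x_0\notin I_1\cup\cdots\cup I_k$, then $f$ is $C_{\reg}$-regular at $x_0$ with $D_{\min}(x_0)\ge 1$ by $(\mathrm{sing}1)$, and convergence follows from the long-range analogue of Theorem~\ref{main}. Assume $x_0\in I_j$. I would group the eigenvalue/eigenvector paths of a given length into translational equivalence classes, take $T(\CP)$ as representative, decompose it into maximal loop stacks, and use \eqref{eq_end_sec2}:
\[
\cont([\CP],t)=\cont(\CP_0,t)\prod_{s=1}^{m}\bigl(V_{\bn_s}(t)-V_{\bze}\bigr)^{-1}\cont([\CP_s],t),
\]
where each $\CP_s$ is a loop stack whose base loop is non-short. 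Each $\cont([\CP_s],\cdot)$ (and $\cont(\CP_0,\cdot)$) is estimated by Theorem~\ref{thm_smallderivative_loop}, and the attachment denominators $(V_{\bn_s}(t)-V_{\bze})^{-1}$ are absorbed into the corresponding $\cont([\CP_s])$ exactly as in the proof of Theorem~\ref{main}: one uses $|\CP_s|\ge\safedist(\bn_s)$ together with the Diophantine lower bound $\|\bn_s\cdot\omega\|\gtrsim\beta^{\level(\bn_s)}$, the extra $(c_{j,-}\delta^{\mu_j})^{-1}$ loss for a singular attachment being counted by $\singdownedges$.

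It then remains to collapse the product into a geometric bound in $k=|\CP|$. In Theorem~\ref{thm_smallderivative_loop} every base is $\varepsilon$-independent except the two replacement factors $\bigl(D_{j,-}/(c_{j,-}\delta^{\mu_j})\bigr)^{\singden(\CP)}$ and $\bigl(c_{j,+}D_{j,-}^2/(\delta^{2\mu_j}c_{j,-}^2D_{j,+})\bigr)^{\singdownedges(\CP)}$. The $\varepsilon$-independent bases are raised to exponents that are $O(k)$: the combinatorial bounds opening Section~5.4 give $\den(\CP),\downedges(\CP)\le k$ and $\totallevel(\CP),\nblevel(\CP)\le C_{\dist}k/C_{\safe}$, so after fixing $\beta$ and then $C_{\safe}$ large these contribute at most $C^{k}$, absorbing all powers of $\beta$ as well. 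The entire $\delta$-dependence of the product is $\delta^{-\mu_j\singden(\CP)-2\mu_j\singdownedges(\CP)}$, which is precisely the quantity that $(\mathrm{sing}4)$ is designed to control, so that it cannot overwhelm the $\varepsilon^{|\CP|}$ coming from the path length. Hence $\varepsilon^{k}\,|\cont([\CP],t)|\le (C\varepsilon^{\varrho})^{k}$ for some $\varrho>0$, uniformly over equivalence classes with $|\CP|=k$; since the number of such classes is $\le C_R^{k}$, this gives $|\varepsilon^k\lambda_k|\le (C\varepsilon^{\varrho})^{k}$ and, running the same argument on eigenvector paths, geometric decay of $\psi_{\bn}$ in $\dist_\varphi(\bn,\bze)$. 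Taking $\varepsilon<\varepsilon_1$ with $C\varepsilon^{\varrho}<1$ makes both series converge, and the limit solves $H(x_0)\psi=\lambda\psi$ as in Theorem~\ref{main}.

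\textbf{The main obstacle} is not conceptual --- all the cancellations are already packaged in Theorem~\ref{thm_smallderivative_loop} and shown there to survive the small-derivative intervals --- but bookkeeping: one must make sure that every extra $\delta^{-\mu_j}$, in particular those produced at attachment points between different maximal loop stacks and not only inside a single stack, is genuinely accounted for by $\singden$ or $\singdownedges$, and that the $\varepsilon$-independent and $\beta$-dependent combinatorial factors really do collapse to $C^k$, so that $(\mathrm{sing}4)$ by itself supplies the remaining positive power of $\varepsilon$.
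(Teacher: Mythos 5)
Your proposal matches the paper's approach exactly: the paper's own ``proof'' of Theorem~\ref{th_main_singular} is the single sentence that it ``repeats the corresponding steps of the proof of Theorem~\ref{main}'' after the preparations of Section~6.2, and your write-up is essentially a careful unpacking of what those steps are --- the equivalence-class decomposition into maximal loop stacks from \eqref{eq_end_sec2}, the substitution of Theorem~\ref{thm_smallderivative_loop} for Theorem~\ref{th_loop_stack}, absorption of the attachment denominators via $|\CP_s|\ge\safedist(\bn_s)$, and using $(\mathrm{sing}4)$ to kill the $\delta$-powers. The one imprecision is your claim that the $\delta^{-\mu_j}$ loss at a \emph{between-stack} attachment $(V_{\bn_s}-V_{\bze})^{-1}$ is already ``counted by $\singdownedges$'' --- by definition $\singdownedges(\CP_s)$ only counts descending edges internal to the stack $\CP_s$, not the edge joining $\CP_s$ back to a previous stack --- but you correctly flag exactly this in your closing paragraph as the remaining bookkeeping point, and the paper is equally silent on it, so this is not a gap relative to the paper's argument.
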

%\begin{itemize}
%\item An {\it long range {$\bm\bm$}-loop} is a finite sequence of lattice vectors $\CL=(\bn_0,\bn_1,...,\bn_{l+1})$,
%where $l\ge 0$, $\bn_0=\bn_{l+1}=\bm$, all other $\bn_j$ cannot be equal to $\bm$ or $\bze$, and for each 
%$j=0,...,l$ we have $\Phi_{\bn_{j+1},\bn_j}\ne 0$.
%\item A {\it long range {$\bze\bk$}-loop} is a finite sequence of lattice vectors $\CL=(\bn_0,\bn_1,...,\bn_{l+1})$,
%where $l\ge 0$, $\bn_0=\bze$, $\bn_{l+1}=\bk$, all other $\bn_j$ cannot be equal to $\bze$, and for each 
%$j=0,...,l$ we have $\Phi_{\bn_{j+1},\bn_j}\ne 0$.
%\item A {\it long range loop configuration} is one of the following:
%\begin{enumerate} 
%	\item A long range $\bze\bk$-loop with $\bk\in \Z^d$.
%	\item An expression of the form $\CP_-,\bn,\CL,\bn,\CP_+$, where $\CL$ is a long range $\bze\bze$-loop, and 
%$$
%\CP=\CP_-,\bn,\CP_+
%$$ 
%is a long range loop configuration that contains a lattice point $\bn\neq \bze$.
%\end{enumerate}
%\item The contribution terms are defined as follows:
%$$
%C(\bn_0,\bn_1,\ldots,\bn_{l},\bn_{l+1})=\prod\limits_{j=0}^{l}\Phi_{\bn_{j+1},\bn_{j}}\prod\limits_{j=1}^{l}(-V_{\bn_j})^{-1},\quad \bn_0=\bn_{l+1}=0.
%$$
%$$
%\label{eq_cp_basem}
%C(\bn_0,\bn_1,\ldots,\bn_{l},\bn_{l+1})=\prod\limits_{j=0}^{l}\Phi_{\bn_{j+1},\bn_{j}}\prod\limits_{j=1}^{l+1}(-V_{\bn_j})^{-1}, \quad 0=\bn_0\neq \bn_{l+1}=\bk.
%$$
%\item For long range loop configurations $\CP$, define $|\CP|$ to be the sum of $\layer(\bm,\bn)$ for each factor $\Phi_{\bm\bn}$ appearing in the expression. 
%\end{itemize}
\section{An example: monotone potential with a flat segment}
\subsection{The class of operators}
Fix an interval $(a-h,a+h)\subset (-1/2,1/2)$, where
$$
-1/2<a-h<a<a+h<1/2.
$$
Consider a continuous non-decreasing function $f\colon (-1/2,1/2)\to \mathbb R$ and a frequency vector $\omega$ with the following properties:
\begin{itemize}
\item $f(-1/2+0)=-\infty$, $f(1/2-0)=+\infty$.
\item $f$ is $C_{\reg}$-regular uniformly for $x\in (-1/2,a-h)\cup(a+h,1/2)$ for some $C_{\reg}>0$.
\item $f'(x)\ge 1$ for all $x\in (-1/2,a-h)\cup(a+h,1/2)$ and $0\le f'(x)\le c_+$ for all $x\in [a-2h,a+2h]$ for some $c_+>0$.
\item $\omega\in (-1/2,1/2)^d$ is a Diophantine frequency vector. In addition, $\|\bn\cdot\omega\|\ge 6h$, for all $\bn$ with $1\le |\bn|_{\infty}\le c_1$. {\it Note that this imposes an additional requirement on the smallness of $h$}. Here, $c_1$ is a sufficiently large absolute constant ($c_1=6$ is sufficient for the main result of this section; however, we did not intend to optimize it and believe that a small modification would allow to use $c_1=3$).
\end{itemize}
\begin{rem}
\label{rem_outside_interval}
This class of functions covers the main example that we have in mind: that is, the case when $f$ is equal to a constant on some sub-interval in $(a-h_1,a+h_1)\subset (a-h,a+h)$. Note that $C_{\reg}$-regularity outside $(a-h,a+h)$ requires $f$ to be strictly monotone near $a-h$ and $a+h$, which implies that the inclusion must be strict. However, by introducing additional rescaling that depends on $h-h_1$, one can consider examples with arbitrarily small values of $h-h_1$.
\end{rem}
\subsection{Outline of the procedure}
Let us consider the following operator family on $\ell^2(\Z^d)$:
\bee
\label{eq_h_schr_def}
(H(x)\psi)_{\bn}=\varepsilon(\Delta\psi)_{\bn}+f(x+\bn\cdot\omega)\psi_{\bn},
\ene
where $x\in \R\setminus(\Z+1/2+\Z^d\cdot\omega)$, and $f$ is extended into $\R\setminus(\Z+1/2)$ by $1$-periodicity. Operator \eqref{eq_h_schr_def} does not satisfy the assumptions of Theorem \ref{main}, as $f$ is not strictly monotone. However, it is possible to replace it by a unitarily equivalent operator with non-constant hopping terms which will satisfy the criteria described in Section 6. We will describe the corresponding unitary transformations in a general form. Suppose that
$$
(H\psi)_{\bm}=V_{\bm}\psi_{\bm}+\ep\sum\limits_{\bm'\in \mathbb Z^d}\Phi_{\bm\bm'}\psi_{\bm'}.
$$
To begin with, we will define the basic ``building block'' that will be used in the construction of the required unitary transformation. Let $\bn\neq \bze$. Suppose that $V_{\bze}=H_{\bze\bze}=0$, $V_{\bn}=H_{\bn\bn}>0$, and 
$$
H_{\bze\bn}=\overline{H_{\bn\bze}}=\varepsilon\Phi_{\bze\bn}=\ep\overline{\Phi_{\bn\bze}}.
$$ 
Put
$$
D:=\sqrt{V_{\bn}^2+\ep^2|\Phi_{\bze\bn}|^2}.
$$
Consider the operator $U$ with the following matrix elements:
$$
U_{\bze\bze}=U_{\bn\bn}=\frac{V_{\bn}}{D};
$$
$$
U_{\bze\bn}=\ep\frac{\overline{\Phi_{\bze\bn}}}{D},\quad U_{\bn\bze}=-\ep\frac{\Phi_{\bn\bze}}{D}=-\ep\frac{\overline{\Phi_{\bze\bn}}}{D};
$$
$$
U_{\bm\bm}=1\text{ for }\bm\in\mathbb Z^d\setminus\{\bze,\bn\};
$$
and all other matrix elements of $U$ are zero. One can check that $U$ is a unitary operator on $\ell^2(\Z^d)$. Let us calculate the conjugation of $H$ by $U$:
$$
U^* H U=\widetilde V+\ep \widetilde \Phi,
$$
where
$$
\widetilde V_{\bze}=-\ep^2\frac{V_{\bn}|\Phi_{\bze\bn}|^2}{D^2},\quad \widetilde V_{\bn}=\frac{V_{\bn}^3+2\ep^2 |\Phi_{\bze\bn}|^2V_{\bn}}{D^2},\quad \widetilde V_{\bm}=V_{\bm} \text{ for }\bm\in \Z^d\setminus\{\bze,\bn\};
$$
$$
\widetilde\Phi_{\bze\bn}=-\ep^2\frac{|\Phi_{\bze\bn}|^2\Phi_{\bn\bze}}{D^2},\quad \widetilde\Phi_{\bn\bze}=-\ep^2\frac{|\Phi_{\bze\bn}|^2\Phi_{\bze\bn}}{D^2},
$$
$$
\widetilde \Phi_{\bze\bm}=\frac{V_{\bn}}{D}\Phi_{\bze\bm}+\ep \frac{\Phi_{\bn\bze}}{D}\Phi_{\bn\bm},\quad \widetilde \Phi_{\bn\bm}=\frac{V_{\bn}}{D}\Phi_{\bn\bm}-\ep \frac{\Phi_{\bze\bn}}{D}\Phi_{\bze\bm}\, \text{ for }\,\bm\in\Z^d\setminus\{\bze,\bn\},
$$
and $\widetilde \Phi_{\bm_1\bm_2}=\Phi_{\bm_1\bm_2}$ for $\bm_1,\bm_2\in\Z^d\setminus\{\bze,\bn\}$. 
\begin{remark}
Clearly, we can apply this transform even if $V_{\bze}\ne 0$, as long as $V_{\bn}\ne V_{\bze}$: we represent $H$ as $H=V_{\bze}I+\hat H$ and transform $\hat H$ using the procedure described above. Similarly, one can apply the procedure to any pair of lattice points $\{\bk,\bk+\bn\}$ through adding $\bk$ to all indices in the above procedure: in particular, $D=\sqrt{(V_{\bn+\bk}-V_{\bk})^2+\ep^2|\Phi_{\bk,\bn+\bk}|^2}$.
\end{remark}

This procedure will be called {\it elimination of the entry $\Phi_{\bze\bn}$}. Note that, strictly speaking, it only eliminates it up to the order $\ep^2$. As a result, new entries can also be created, such as $\widetilde \Phi_{\bm\bn}$ in the above calculations. Using translation, one can describe elimination of any entry $\Phi_{\bk,\bk+\bn}$ under similar assumptions.

It is natural to consider the elimination procedure in the context of long range operators from Section 7.1. For example, if one starts from an operator $H=V+\ep \Phi^1$, the result of the transformation can be represented as
$$
U^* H U=\widetilde V+\ep \widetilde \Phi^1+\ep^2\widetilde \Phi^2+\ep^3 \widetilde \Phi^3.
$$
Here, we will ignore the dependence of $D$ on $\varepsilon$ and group the correction terms based on the number of $\ep$ factors in front of them: for example, in the earlier notation, $\widetilde{\Phi}^3_{\bn\bze}=-\frac{|\Phi_{\bze\bn}|^2\Phi_{\bze\bn}}{D^2}$. In application, the dependence on $\ep$ through $D$ will be slow enough so that the upper required bounds on the off-diagonal terms and their derivatives will be uniform in $\ep$. On the other hand, one will need to deal more carefully with the diagonal terms, since the presence of the $\ep^2$ terms will be crucial for establishing monotonicity.

 In this new notation, one can state that the procedure completely eliminates $\Phi^1_{\bze\bn}$, since the correction to this term goes into $\widetilde \Phi^{3}$. One can repeat this procedure, say, for the pair of lattice points $\{\bze,\bm\}$. In this case, we will use the operator $U$ constructed from $\widetilde V+\ep \widetilde \Phi^1$, without including further terms. As a result, we will eliminate both $\Phi^1_{\bze\bm}$ and $\Phi^1_{\bze\bn}$. This may introduce some new higher order terms. The exact result of the operations will, in particular, depend on the order in which $\Phi^1_{\bze\bm}$ and $\Phi^1_{\bze\bn}$ are eliminated, but that dependence will only affect higher order terms.

We intend to apply a sequence of transformations of the above kind to the operator \eqref{eq_h_schr_def}, where $f$ satisfies the bullet points in the beginning of the section. The goal of these transformations will be to construct a new operator, unitarily equivalent to $H(x)$, satisfying the assumptions made in Section 6.2. We will start by considering $H(x)$ as a ``long range'' operator $H_0(x)=V_0(x)+\ep \Phi^1_0(x)$, where $\Phi_0^1(x)=\Delta$. During later steps, the operator will have non-constant off-diagonal terms of higher range than $\Delta$. In the notation $\Phi_k^j(x)_{\bm\bn}$ above and below, $k$ is the step of the procedure, $j$ is the order of the perturbation, and $\bm\bn$ indicate the matrix element under consideration. The transformation procedure will be performed in two steps as follows.
\begin{itemize}
	\item For every $\bm\in \Z^d$ such that $x+\bm\cdot\omega\in [a-h,a+h]$, eliminate all entries $\Phi_0^1(x)_{\bm\bn}$. This procedure can be performed in a covariant way, so that the resulting operator family $H_1(x)$ will still be quasiperiodic. After this first step, the family $H_1(x)=V_1(x)+\ep\Phi_1^1(x)+\ep^2\Phi_1^2(x)+\ep^3\Phi_1^3(x)$ will satisfy two additional properties: the diagonal entries will have lower bounds on the derivatives of the order $\ep^2$, and for every $\bm\in \Z^d$ such that $x+\bm\cdot\omega\in [a-h,a+h]$, all edges starting at $\bm$ will have weight at least two (in other words, $\Phi_1^1(x)_{\bm\bn}=0$).
	\item On the second step, eliminate all entries $\Phi_1^{2}(x)_{\bm\bn}$ for all $\bm\in \Z^d$ with $x+\bm\cdot\omega\in [a-h,a+h]$ (again, one needs to do it in a covariant way described below in more detail).
\end{itemize}
Afterwards, one needs to check that the new operator family $H_2(x)$ satisfies the assumptions (sing0) -- (sing4) in Subsection 7.2.
\subsection{Step 1}For $x\in [a-h,a+h]$ and $\bm\in\Z^d$ with $|\bm|_1=1$, denote by $U_{1,\bm}(x)$ the unitary operator that eliminates the off-diagonal entry $\Phi^1_0(x)_{\bm\bn}=(\Delta)_{\bze\bm}(x)=1$ from $H(x)$. Let
$$
U_1(x)=\prod_{\bm\in \Z^d\colon |\bm|_1=1}U_{1,\bm}(x).
$$
Let $U_2(x)=U_1(x)$ for $x\in [a-h,a+h]$, $U_2(x)=I$ for $x\in [-1/2,1/2)\setminus[a-2h,a+2h]$, and ``unitarily interpolated'' in between:
$$
U_2(a-2h+th)=\left((1-t)I+tU_1(a-h)\right)\left|(1-t)I+tU_1(a-h)\right|^{-1},\quad t\in [0,1],
$$
and similarly on $[a+h,a+2h]$. Additionally, extend $U_2(x)$ from $[-1/2,1/2)$ to $\mathbb R$ by $1$-periodicity and continuity in the variable $x$.

Next, we would like to spread this transformation to all lattice points by considering
$$
U_3(x)=\prod\limits_{\bn\in \mathbb Z^d}T_{\bn}U_2(x+\bn\cdot\omega)T_{-\bn},
$$
where $T_{\bn}$ denotes the lattice translation operator. Note that the factors in the product commute and the product is well-defined in the strong operator topology: for any vector $\psi\in \ell^2(\Z^d)$ with finite support, all factors of $U_3(x)$, except for finitely many, will act on $\psi$ as an identity operator, and the remaining finitely many will commute with each other.

Define
$$
H_1(x):=U_3(x)^*H(x)U_3(x)=V_1(x)+\ep \Phi_1^1(x)+\ep^2 \Phi_1^2(x)+\ep^3\Phi_1^3(x).
$$
Here, we will use several assumptions discussed above: $\Phi_1^j(x)$ may depend on $\ep$ through the $D$ factors. We will also absorb all higher order terms into $\Phi_1^3(x)$. Let $f_1(x):=V_1(x)_{\bze\bze}$.
\begin{lem}
\label{lemma_step_one} Suppose that $H(x)$ satisfies the assumptions from Subsection $7.1$. Construct $U_3(x)$ and $H_1(x)$ as above. There exists $\ep_0=\ep_0(f,\omega)>0$ and $c_1=c_1(f,\omega)>0$, $c_2(f,\omega)>0$, $c_3(d)>0$ such that, for $0<\ep<\ep_0$ we have the following:
\begin{enumerate}
	\item $f_1$ is $(C_{\reg}-c_2\ep)$-regular on $[-1/2,a-h]\cup[a+h,1/2]$.
	\item $f_1'(x)\ge 1-c_2\ep$ for $x\in [-1/2,a-h]\cup[a+h,1/2]$.
	\item $f_1'(x)\le D+c_2\ep$ for $x\in [a-2h,a+2h]$.
	\item $\|(\Phi^j_1)_{\bm\bn}\|_{C^1}\le c_2$.
	\item The range of $\Phi_1^2(x)$ is at most $2$; in other words, $\Phi^2_1(x)_{\bm\bn}\neq 0$ can only happen for $|\bm-\bn|_1\le 2$. The range of $\Phi^3_1(x)$ is bounded by $c_3(d)$.
	\item For $x+\bm\cdot\omega\in [a-h,a+h]+\Z$, we have $\Phi^1_1(x)_{\bm\bn}=0$.
	\item For $x\in [a-h,a+h]+\Z$, we have $f_1'(x)\ge c_1 \ep^2$.
	\end{enumerate}
\end{lem}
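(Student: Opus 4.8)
The plan is to propagate the matrix entries of $H(x)$ through the explicit building‑block formulas, using throughout that the construction is \emph{local} — each factor $T_{\bq}U_2(x+\bq\cdot\omega)T_{-\bq}$ of $U_3(x)$ is supported on the ball $B_1(\bq)$ — and that, by the arithmetic hypothesis $\|\bn\cdot\omega\|\ge 6h$ for all short lattice vectors $\bn$, the $2d$ neighbours of any site whose phase lies in $[a-h,a+h]+\Z$ have phases bounded away from $[a-h,a+h]+\Z$, hence land where $f$ is $C_{\reg}$-regular with $f'\ge 1$. First I would record covariance: reindexing the product gives $T_{\ba}U_3(x)T_{-\ba}=U_3(x+\ba\cdot\omega)$, so $H_1(x)$ is quasiperiodic and it suffices to control the entries at and around $\bze$. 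Call a site $\bq$ \emph{active} if $\{x+\bq\cdot\omega\}\in[a-2h,a+2h]$, so that its factor is not the identity. The hypothesis on $\omega$ yields: distinct active sites differ by at least $3$ in $\ell^\infty$-distance, so the factors of $U_3(x)$ act on pairwise disjoint balls and commute; if $\{x\}\in[a-h,a+h]$ then $\bze$ is the only active site within $\ell^\infty$-distance $2$ of $\bze$, so near $\bze$ conjugation by $U_3(x)$ equals conjugation by $U_1(x)$; and if $\{x\}\in[-1/2,a-2h]\cup[a+2h,1/2]$ then $U_3(x)$ acts trivially on $B_1(\bze)$ and $f_1\equiv f$ near $\bze$.

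For the routine items (1)--(6) I would argue as follows. Since $D_{\bm}=\sqrt{(V_{\bm}-V_{\bze})^2+\ep^2|\Phi_{\bze\bm}|^2}\ge|f(x+\bm\cdot\omega)-f(x)|\ge c(f,\omega)h$ for any neighbour $\bm$ of a site whose phase lies in $[a-2h,a+2h]$ — this is where separation of that neighbour's phase from $[a-h,a+h]+\Z$ forces a definite jump of $f$ — every matrix element of $U_{1,\bm}(x)$ and its $x$-derivative is bounded uniformly in $0\le\ep<1$; the large factors $f'(x+\bm\cdot\omega)$ that appear when a neighbour's phase is near the pole $\Z+1/2$ are controlled by the behaviour‑at‑infinity clause (cr2). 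Plugging these into the formulas for $U^*HU$ gives (4) and (5): newly created $\ep^1$-entries keep range $1$, $\ep^2$-entries have range $2$, $\ep^3$-entries have range $\le c_3(d)$, and all are $C^1$-bounded in $x$. Because $U_3(x)=I+O(\ep)$ near $\bze$ in $C^1$ (rotation angles $\sim\ep/D_{\bm}$; in the interpolation region the derivative merely picks up the harmless fixed factor $1/h$), the diagonal entry $f_1=V_1(\cdot)_{\bze\bze}$ satisfies $|f_1-f|,\ |f_1'-f'|\le c_2\ep$ on $[a-2h,a+2h]$ and $f_1\equiv f$ near $\bze$ on the pure monotone region, which yields (2), (3), and — since a small $C^1$-perturbation of a $C_{\reg}$-regular strictly monotone function stays regular with constant changed by $O(\ep)$ — also (1). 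For (6): when $\{x\}\in[a-h,a+h]$, $U_1(x)=\prod_{|\bm|_1=1}U_{1,\bm}(x)$ eliminates each $\Phi_0^1(x)_{\bze\bm}$, and since distinct unit vectors are non-adjacent ($\Delta_{\bm\bm'}=0$) a later factor $U_{1,\bm'}$ re-introduces $\Phi_{\bze\bm}$ only at order $\ep^2$, hence into $\Phi_1^2$ or $\Phi_1^3$; thus $\Phi_1^1(x)_{\bze\bm}=0$ exactly, and by covariance $\Phi_1^1(x)_{\bm\bn}=0$ whenever $x+\bm\cdot\omega\in[a-h,a+h]+\Z$.

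The heart of the matter is (7). Iterating the building-block formula — the eliminations at distinct unit directions interact only at order $\ep^4$, again because distinct unit vectors are non-adjacent — I would get, for $\{x\}\in[a-h,a+h]$,
$$
f_1(x)=f(x)-\ep^2\sum_{|\bm|_1=1}\frac{f(x+\bm\cdot\omega)-f(x)}{\bigl(f(x+\bm\cdot\omega)-f(x)\bigr)^2+\ep^2}+O(\ep^3),
$$
the remainder $C^1$-bounded by a constant depending only on $f,\omega$. Differentiating, using $|f(x+\bm\cdot\omega)-f(x)|\ge c(f,\omega)h\gg\ep$, and splitting off the $f'(x)$ terms,
$$
f_1'(x)=f'(x)\,R(x)+\ep^2\sum_{|\bm|_1=1}\frac{f'(x+\bm\cdot\omega)}{\bigl(f(x+\bm\cdot\omega)-f(x)\bigr)^2}+O(\ep^3),
$$
where $R(x)=1-\ep^2\sum_{|\bm|_1=1}(f(x+\bm\cdot\omega)-f(x))^{-2}+O(\ep^3)\ge\tfrac12$ for small $\ep$. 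As $f'(x)\ge0$, the first summand is $\ge0$. In the remaining sum, at most one direction $\bm$ can have $x+\bm\cdot\omega$ closer than $3h$ to $\Z+\tfrac12$ (two would force $\|(\bm-\bm')\cdot\omega\|<6h$ with $|\bm-\bm'|_\infty\le2$, contradicting the hypothesis on $\omega$), and for every other direction $x+\bm\cdot\omega$ lies in a fixed compact set disjoint from the poles and from the flat region, where $f'\ge1$ and $|f(x+\bm\cdot\omega)-f(x)|$ is bounded, so that term is $\ge c_1(f,\omega)>0$; the possibly resonant direction contributes a non-negative, negligible term. Hence $f_1'(x)\ge c_1\ep^2-O(\ep^3)\ge\tfrac{c_1}{2}\ep^2$ once $0<\ep<\ep_0(f,\omega)$, which is (7).

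I expect (7) to be the real obstacle: one must extract a definite positive lower bound from a second-order correction that, \emph{a priori}, need not have a fixed sign, and this succeeds only because three structural inputs align — every neighbour of a flat site lies in the strictly monotone region (a consequence of the arithmetic hypothesis on $\omega$), so $f'(x+\bm\cdot\omega)\ge1$; the regularity-at-infinity clause handles the at-most-one neighbour whose phase approaches the pole of $f$; and the unfavourable $-f'(x)$ part of the correction can be absorbed against the bare derivative $f'(x)\ge0$ instead of against the $\ep^2$ term. Everything else reduces to bookkeeping on the explicit conjugation formulas and the $O(\ep)$-closeness of $U_3(x)$ to the identity.
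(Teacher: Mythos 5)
Your proposal is correct and follows essentially the same route as the paper: record covariance and the locality of $U_3(x)$, exploit the arithmetic separation $\|\bn\cdot\omega\|\ge 6h$ to keep all relevant denominators of size $\gtrsim h$ so that each factor $U_j(x)$ is a $C^1$-small $O(\ep)$ perturbation of the identity, read off (1)--(6) from the building-block formulas and range considerations, and extract (7) from the explicit second-order diagonal correction.

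The one place where you add genuinely useful detail over the paper's brief sketch is item (7). The paper writes $f_1(x)=f(x)+\ep^2\sum_j\left(\frac{1}{f(x)-f(x+\omega_j)}+\frac{1}{f(x)-f(x-\omega_j)}\right)+O(\ep^3)$ and concludes from $f'\ge 1$ at the shifted phases. Differentiating the correction term, however, naively produces $\frac{f'(x+\bm\cdot\omega)-f'(x)}{(f(x)-f(x+\bm\cdot\omega))^2}$, whose $-f'(x)$ part has the wrong sign whenever $f'>0$ on the flat interval (which is permitted, with $f'\le c_+$). Your factorisation $f_1'=f'(x)R(x)+\ep^2\sum_{\bm}\frac{f'(x+\bm\cdot\omega)}{(f(x+\bm\cdot\omega)-f(x))^2}+O(\ep^3)$ with $R\ge\frac12$ absorbs those negative pieces against the bare $f'(x)\ge 0$, which is exactly what is needed. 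Likewise, isolating the at-most-one direction with $x+\bm\cdot\omega$ near $\Z+\frac12$ (a consequence of $\|\bn\cdot\omega\|\ge 6h$ for $|\bn|_\infty\le 2$) and using only its sign, while getting a definite lower bound from the remaining $2d-1$ terms, is the correct way to justify the uniform constant $c_1$. This is the right completion of the paper's abbreviated argument rather than a different method, and the rest of your proof (commutativity of well-separated factors, the $1/h$ derivative factor in the interpolation region, range bookkeeping for $\Phi_1^2$ and $\Phi_1^3$, the order-$\ep^4$ interaction between eliminations at distinct, non-adjacent unit vectors) is consistent with the paper's reasoning.
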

\begin{proof}
Properties (1) -- (4) follow from the fact that all matrices $U_j(x)$ involved in the process have finite range are small perturbations of identity with $\|(I-U_j(x))_{\bm\bn}\|_{C^1}\le c(\omega,f) \ep$. The latter follows from the fact that the entries of $U_j(x)$ are obtained from $\frac{1}{f(x)-f(x+\bn\cdot\omega)}$, where $\dist(\bn\cdot\omega,\Z)\ge 2h$ (and therefore $f$ is regular either at $x$ or at $x+\bn\cdot\omega$). Property (5) follows from the construction of the operators $U$: one can note that any edge created between $\bn$ and $\bm$ will have length at least $|\bm-\bn|$. Note that, since we absorb all lower order terms into $\Phi_1^3$, we cannot argue that its range is bounded by $3$. However, it is clearly bounded by some constant that only depends on the number of matrix multiplications which, in turn, only depends on the dimension. Property (6) directly follows from the construction of $U$. Property (7) is the most important property which follows from the calculations of Section 7.2. For $x\in [a-h,a+h]$, we have
$$
f_1(x)=f(x)+\varepsilon^2 \sum_{j=1}^d \left(\frac{1}{f(x)-f(x+\omega_j)}+\frac{1}{f(x)-f(x-\omega_j)}\right)+O(\ep^3),
$$
where the last term admits a $C^1$-uniform upper bound in $x$ on $[a-h,a+h]$. In particular, we have $f_1'(x)\ge c\ep^2$ on $[a-h,a+h]$, where $c=c(f,\omega)>0$; note that both $x+\omega_j$ and $x-\omega_j$ are outside of $[a-h,a+h]\,\mathrm{mod}\,\Z$, and therefore the derivatives of $f$ are bounded from below in that region.
\end{proof}
\subsection{Step 2} The operator $H_1(x)$ is a ``borderline'' case which just misses the assumptions of (sing4) and thus Theorem \ref{th_main_singular}. Indeed, we have only one singular interval with $\mu_1=2$, $\nu_1=1$. The shortest possible loop starting from a singular small denominator has length $4$ (two steps of length two). By attaching multiple copies of this loop, we can have $\singdownedges(\CP)\sim |\CP|/4$, which fall short of satisfying (sing4).

The above issue can be resolved by performing another step of eliminating diagonal entries. For each $\bn\in \Z^d$ with $|\bn|_1\le 2$ and $x\in [a-h,a+h]$, let $U_{3,\bn}(x)$ be the unitary operator which eliminates the entry $\Phi_1^2(x)_{\bze\bn}$ in the operator $V_1(x)+\ep^2 \Phi_1^2(x)$. Note that, due to (6) in Lemma \ref{lemma_step_one}, we already have $\Phi_1^1(x)_{\bze\bn}$ eliminated for $x$ under consideration. Let $U_3(x)$ be the operator obtained from $U_{3,\bn}(x)$ in the same way as $U_3$ was obtained from $U_1$ (that is, interpolation with the identity and extension by covariance).
\begin{lem}
\label{lemma_step_two}
Under the assumptions of Lemma $\ref{lemma_step_one}$ and with the above construction of $U_3(x)$, let 
$$
H_2(x):=U_3(x)^*H_1(x)U_3(x)=V_2(x)+\ep \Phi_2^1(x)+\ep^2 \Phi_2^2(x)+\ep^3\Phi_2^3(x).
$$ Then, the operator $H_2=V_2+\ep \Phi^1_2+\ep^2 \Phi^2_2+\ep^3\Phi^3_2$ satisfies $(1)$--$(7)$ from Lemma $\ref{lemma_step_one}$. In addition, $\Phi_2^2(x)_{\bm\bn}=0$ for $x+\bm\cdot\omega\in [a-h,a+h]+\Z$.
\end{lem}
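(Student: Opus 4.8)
The proof follows that of Lemma~\ref{lemma_step_one} almost verbatim, the only difference being that the unitaries used in Step~2 are closer to the identity (of size $O(\ep^2)$ rather than $O(\ep)$). So the plan is: (a) record this closeness; (b) deduce properties (1)--(7) for $H_2$ exactly as in Step~1; (c) establish the new vanishing statement by a short bookkeeping argument with the elimination formulas. For (a), fix a flat point $\bm$ (meaning $x+\bm\cdot\omega\in[a-h,a+h]+\Z$) and a partner $\bn$ with $|\bn|_1\le 2$. Then $\bm+\bn$ is \emph{not} flat: the arithmetic assumption $\|\bk\cdot\omega\|\ge 6h$ for $1\le|\bk|_\infty\le c_1$ forces $\|\bn\cdot\omega\|\ge 6h$, so $x+(\bm+\bn)\cdot\omega$ lies at distance $\ge 4h$ from $[a-h,a+h]\bmod\Z$, where $f_1$ is regular with $f_1'\ge\tfrac12$ by Lemma~\ref{lemma_step_one}(2). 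Hence $|f_1(x+\bm\cdot\omega)-f_1(x+(\bm+\bn)\cdot\omega)|$ is bounded below by a positive constant depending only on $h$ and $f$, and the building-block unitary $U_{3,\bn}(x)$ built from the entry $\ep^2\Phi_1^2(x)_{\bm,\bm+\bn}$ of $V_1(x)+\ep^2\Phi_1^2(x)$ satisfies $\|(I-U_{3,\bn}(x))_{\bm'\bm''}\|_{C^1}\le c\ep^2$. Since distinct flat points are more than $c_1$ apart while $\Phi_1^2$ has range $\le 2$, the factors of $U_3(x)$ attached to different flat points have disjoint supports, so $U_3(x)=I+O(\ep^2)$ in $C^1$-norm, with range bounded by a constant depending only on $d$; the interpolation with the identity near $\partial[a-h,a+h]$ and the covariant extension are identical to Step~1.

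For (b), conjugation by $I+O(\ep^2)$ perturbs $V_1$, every $\Phi_1^j$, and their $C^1$-norms by $O(\ep^2)$, which gives (1)--(4) with modified constants and (5) with a larger $c_3(d)$, since conjugation by a finite-range operator only enlarges ranges by a bounded amount. The diagonal corrections coming from the elimination formulas of Section~7.2 are in fact $O(\ep^4)$, because the eliminated entries $\ep^2\Phi_1^2(x)_{\bze\bn}$ are already $\ep^2$-small; hence $f_2=f_1+O(\ep^4)$ and, on $[a-h,a+h]+\Z$, the bound $f_2'\ge c_1\ep^2$ of (7) survives after halving $c_1$. For (6), group $U_3(x)^*H_1(x)U_3(x)$ by powers of $\ep$ (treating the $D$-factors as $\ep$-independent, as throughout Section~7): since $U_3(x)=I+O(\ep^2)$, the coefficient of $\ep^1$ is unchanged, so $\Phi_2^1=\Phi_1^1$ and (6) is inherited from Lemma~\ref{lemma_step_one}(6). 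The same observation shows that conjugating the $\ep\Phi_1^1$ and $\ep^3\Phi_1^3$ parts of $H_1$ produces only terms of order $\ge 3$; therefore the entire $\ep^2$-coefficient of $H_2$ is produced by conjugating $V_1+\ep^2\Phi_1^2$, i.e.\ by the elimination process itself.

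For (c), fix a flat point $\bm$ and let $\bm+\bn_1,\dots,\bm+\bn_r$ ($|\bn_i|_1\le 2$) be its partners, processed in this order. Applying the building-block formula for the pair $\{\bm,\bm+\bn_i\}$ to $V_1+\ep^2\Phi_1^2$, one reads off that after the $i$-th elimination the $\ep^2$-part of the entry at $(\bm,\bm'')$ equals $\tfrac{(V_1)_{\bm+\bn_i}}{D}$ times its previous $\ep^2$-part, plus a term of order $\ep^4$ that goes into $\Phi_2^3$; in particular the $(\bm,\bm+\bn_i)$ entry is set to $0$ and every other entry in row $\bm$ is merely rescaled. By induction on $i$, once all partners of $\bm$ are processed the whole row $\bm$ of the $\ep^2$-part vanishes (a zeroed entry stays zero under multiplication by $\tfrac{V}{D}$, and the $\ep^4$ remainders never contribute at order $\ep^2$). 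Finally, the elimination attached to any other flat point $\bm'$ only modifies rows $\bm'$ and $\bm'+\bn'$ with $|\bn'|_1\le 2$, and neither of these can equal $\bm$, since otherwise $\|(\bm-\bm')\cdot\omega\|$ would be at most $2h<6h$ for $1\le|\bm-\bm'|_\infty\le c_1$, contradicting the arithmetic assumption; so row $\bm$ stays zero. Combined with the last sentence of (b), this yields $\Phi_2^2(x)_{\bm\bn}=0$ whenever $x+\bm\cdot\omega\in[a-h,a+h]+\Z$. The main obstacle is precisely this bookkeeping of how the powers of $\ep$ mix under conjugation in (c), and in particular verifying that Step~2 feeds back only $O(\ep^4)$ into the diagonal so that the fragile lower bound $f_2'\ge c_1\ep^2$ of property~(7) is preserved; apart from that, every estimate is the Step~1 computation with $\ep$ replaced by $\ep^2$.
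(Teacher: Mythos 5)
Your proof is correct and follows the same approach as the paper: perturb by the $O(\ep^2)$-close unitary, check that (1)--(7) survive, and read the vanishing of row $\bm$ off the elimination formula plus the arithmetic separation of flat points. One small imprecision worth flagging: you assert $f_2=f_1+O(\ep^4)$ while elsewhere granting that conjugating $\ep\Phi_1^1$ and $\ep^3\Phi_1^3$ produces terms of order $\ge 3$; to reconcile these you would need to add that at a flat point the $\ep^3$ diagonal cross-terms from $\ep\Phi_1^1$ vanish because Lemma~\ref{lemma_step_one}(6) kills the entire row/column of $\Phi_1^1$ there. The paper avoids the issue by only claiming the diagonal perturbation is $O(\ep^3)$, which is still $o(\ep^2)$ and hence already enough to preserve property (7), so the conclusion stands either way.
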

\begin{proof}
The conclusions of Lemma \ref{lemma_step_one} remain true under perturbations of the diagonal entries by $O(\ep^3)$ and off-diagonal entries of $O(\ep^2)$, which is clearly the case if one conjugates $H_1$ using $U_3$. The additional claim follows from the construction of $U_3$, since it completely eliminates $\Phi_1^2(x)_{\bm\bn}$ with $x+\bm\cdot\omega\in [a-h,a+h]+\Z$, possibly creating additional terms of order $\ep^3$ or higher.
\end{proof}
\begin{thm}
\label{th_main_flat}
Let $H(x)$ be an operator of the class defined in Section $7.1$. Construct the operator $H_2(x)$ using $U_1(x)$ -- $U_3(x)$ as above, and consider the loop expansion for $H_2(x)$ as in Section $7.1$. There exist $\beta_{\max}=\beta_{\max}(f,\omega)>0$, $C(f,\omega)>0$ and $\ep_0=\ep_0(f,\omega)>0$ such that, for
$$
0<\ep<\ep_0,\quad 0<\beta<\beta_{\max},
$$
the operator $H_2(x)$ satisfies $(\mathrm{sing4})$ from Section $7.3$ and, as a consequence, the perturbation series for $H_2(x)$ convergence and both $H(x)$ and $H_2(x)$ satisfy Anderson localization for all $x\in \R\setminus(\Z+1/2+\Z^d\cdot\omega)$.
\end{thm}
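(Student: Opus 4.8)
The plan is to check that the operator family $H_2(x)$ constructed in Step~1 and Step~2 falls under hypotheses $(\mathrm{sing}0)$--$(\mathrm{sing}4)$ of Section~7.3, to invoke Theorem~\ref{th_main_singular}, and finally to transfer the resulting localization from $H_2(x)$ back to $H(x)$. By Lemmas~\ref{lemma_step_one} and~\ref{lemma_step_two}, $H_2(x)=V_2(x)+\ep\Phi_2^1(x)+\ep^2\Phi_2^2(x)+\ep^3\Phi_2^3(x)$ satisfies properties (1)--(7) of Lemma~\ref{lemma_step_one} together with the crucial extra vanishing $(\Phi_2^2(x))_{\bm\bn}=0$ whenever $x+\bm\cdot\om\in[a-h,a+h]+\Z$. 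Once Theorem~\ref{th_main_singular} applies, the perturbation series for $H_2(x)$ converges for every admissible $x$; Anderson localization for $H_2(x)$ then follows just as in Corollary~\ref{cor_psi}, since $f_2=(V_2)_{\bze\bze}$ is \emph{strictly} monotone (Lemma~\ref{lemma_step_one}(2),(7)) so the algebraic non-resonance condition holds, one uniformly exponentially localized eigenvector $\psi[\bn]$ is produced per lattice site, and the near-orthonormality $\|UU^*-I\|\le c\ep^{c'}$ forces this family to be complete; and localization for $H(x)$ follows because $H(x)=W(x)H_2(x)W(x)^*$ with $W(x)=U_3^{(1)}(x)U_3^{(2)}(x)$ a covariant, uniformly finite-range unitary that is $O(\ep)$-close to the identity, hence carries exponentially localized eigenvectors to exponentially localized eigenvectors.

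First I would fix the data required by Section~7.2: there is a single singular interval, $I_1=(a-h,a+h)$, and one takes $\mu_1=2$ and $\delta\asymp\ep$ — this is forced by Lemma~\ref{lemma_step_one}(7), which gives $f_2'(x)\ge c_1\ep^2$ on $I_1$ — while $c_{1,\pm}$, $D_{1,\pm}$, $C_{\reg}$ are all independent of $\ep$. Then $(\mathrm{sing}0)$ is immediate; $(\mathrm{sing}1)$ is Lemma~\ref{lemma_step_one}(1),(2) (the harmless degradations $C_{\reg}\mapsto C_{\reg}-c_2\ep$ and $1\mapsto1-c_2\ep$ are absorbed for small $\ep$, the lower bound being restored to $1$ by a fixed rescaling as in Theorem~\ref{th_local_localization}); $(\mathrm{sing}2)$ is Lemma~\ref{lemma_step_one}(3),(7); and $(\mathrm{sing}3)$ follows from the ``regularity at infinity'' bullet of Section~7.1 together with Lemma~\ref{lemma_step_one}(4). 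One must also verify that $H_2(x)$ is a bona fide finite-range long-range operator of the type treated in Section~6.1: Lemma~\ref{lemma_step_one}(4),(5) provide uniform $C^1$ bounds on the $\Phi_2^j$ and ranges $\le2,\le2,\le c_3(d)$ for $j=1,2,3$, so that Proposition~\ref{prop_nonlocal_step} and Theorem~\ref{thm_smallderivative_loop} are available verbatim.

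The heart of the proof is $(\mathrm{sing}4)$, and this is precisely where Step~2 is needed. By Lemma~\ref{lemma_step_one}(6) and the additional conclusion of Lemma~\ref{lemma_step_two} — together with self-adjointness $(\Phi_2^j)_{\bm\bn}=\overline{(\Phi_2^j)_{\bn\bm}}$ — both $\Phi_2^1$ and $\Phi_2^2$ vanish in every row (hence every column) indexed by a singular point. Therefore \emph{every edge of $\Gamma$ incident to a singular small denominator has length at least $3$}. Consequently, in any loop stack $\CP$, each singular small denominator visited by $\CP$ forces at least $3$ units of total length $|\CP|$, and each descending edge counted by $\singdownedges(\CP)$ forces at least $6$ (the descent, of length $\ge3$, together with its matching ascent out of the same singular vertex, also of length $\ge3$); a careful disjointness count — using the canonical-translation structure of loop stacks so that these length contributions are not over-counted — yields a bound $\mu_1\singden(\CP)+2\mu_1\singdownedges(\CP)=2\singden(\CP)+4\singdownedges(\CP)\le r|\CP|$ with a fixed $r<1$ (the extremal configuration being a base loop decorated with many shortest singular loops of length $6$, which realizes $r$ arbitrarily close to $2/3$; Step~1 alone would only give the borderline $r=1$, as noted in Section~7.2). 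With $\delta\asymp\ep$ this is exactly the quantitative bound on $\delta^{\mu_1\singden(\CP)+2\mu_1\singdownedges(\CP)}$ demanded by $(\mathrm{sing}4)$, uniformly in $x$; the auxiliary requirement $\|\bn\cdot\om\|\ge6h$ for $1\le|\bn|_\infty\le c_1$ (with $c_1=6$) from Section~7.1 is used here to guarantee that these short singular loops are leveled correctly so that the counting is legitimate. With $(\mathrm{sing}0)$--$(\mathrm{sing}4)$ in hand, Theorem~\ref{th_main_singular} gives convergence of the series for $H_2(x)$, and hence the assertions above.

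I expect the principal obstacle to be exactly the proof of $(\mathrm{sing}4)$: converting the qualitative statement ``edges at singular sites have length $\ge3$'' into the clean inequality $2\singden(\CP)+4\singdownedges(\CP)\le r|\CP|$ with an \emph{explicit} $r<1$ valid for \emph{all} loop stacks (including those whose base loop repeatedly enters and leaves the flat region, and those with singular denominators appearing on upper sheets after translation), while keeping every constant — $C_{\reg}$, $D_{1,\pm}$, $c_{1,\pm}$, and $\beta_{\max}$ — genuinely independent of $\ep$, which is indispensable for applying the $\ep$-uniform Theorem~\ref{th_main_singular}. A secondary, purely bookkeeping difficulty is tracking that the two successive covariant conjugations preserve, with constants uniform in $\ep$, all of the structural hypotheses of Section~6 — finite range, $C^1$ control of the hopping terms, and strict monotonicity of $f_2$ with the $\ep^2$-degraded lower bound inside $I_1$ — which is where the device of absorbing every higher-order correction into $\Phi^3$ (Lemmas~\ref{lemma_step_one},~\ref{lemma_step_two}) does its work.
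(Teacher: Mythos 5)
Your proposal matches the paper's proof of Theorem~\ref{th_main_flat} in essence: the heart of both arguments is that by Lemma~\ref{lemma_step_one}(6) and Lemma~\ref{lemma_step_two}, every edge incident to a vertex whose coordinate lands in the flat region has length at least $3$, which forces the shortest attached loop to have length $\ge 6$ and forces a long excursion ($\ge 7$ in the paper's count) between successive visits to the flat region, and this is exactly what yields $(\mathrm{sing4})$. The only genuine difference is in the arithmetic of the final counting: the paper groups non-base loops by length $\le 6$ vs.\ $\ge 7$, bounds the number of loops by $k_2/7$ and the number of singular small denominators per loop by $\lfloor|\CL|/6\rfloor$, and arrives at the conservative but adequate $r=19/21$; you instead perform a per-edge disjointness accounting (crediting $\ge 3$ to each singular small denominator and $\ge 6$ to each singular descending edge) to claim $r$ can be taken just above $2/3$. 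Both yield $r<1$, which is all that is needed, but note that your disjointness claim is asserted rather than proved, and it genuinely depends on the $\|\bn\cdot\omega\|\ge 6h$ hypothesis (to rule out an ascending or descending edge serving simultaneously as the in-edge of a singular small denominator); the paper's per-loop count sidesteps this issue entirely, which is presumably why the authors chose that cruder but cleaner route.
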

\begin{proof}
In order to check (sing4) for $H_2(x)$, let $\CP$ be a loop stack. For a non-base loop $\CL$ on $\CP$, let us consider the total number of $\delta$ factors produced by this loop (including the factor from its attachment). The last bullet point in the beginning of Section 7.1 implies that, starting from the interval $[a-h,a+h]$, the total length of steps to return to that interval without ending the loop will be at least $7$. As a consequence, {\it any loop $\CL$ with $|\CL|\le 6$ does not have any singular small denominators on it}. Therefore, the contribution of this loop is of the order $\delta^{-2}\sim\ep^{-4}$, which gives a total contribution of at most $O(\ep^{-2k_1/3})$, where $k_1$ is the total length of these loops.

Let $k_2$ be the total length of loops $\CL$ on $\CP$ with $|\CL|\ge 7$ (note that $H_2(x)$ may contain off-diagonal entries of odd ranges). The total contribution of the attachment factors from these loops is $\ep^{-4 k_2/7}$. Additionally, each such loop can have at most $\lfloor |\CL|/6\rfloor$ singular small denominators (as the result of Lemma \ref{lemma_step_two}). As a consequence, the total contribution from $\delta$-factors from these loops will be $O(\ep^{-4k_2/7-2k_2/6})=O(\ep^{-19k_2/21})$. By combining all bounds, we arrive to (conv4) with $r=19/21$.
\end{proof}
\begin{rem}
Suppose that $f$ indeed has a flat piece. One can check that then the integrated density of states of our operator $H$ has a lower derivative number of order $\sim\varepsilon^{-2}$ on an interval of length  $\sim\varepsilon^{2}$; outside this interval, derivative numbers are bounded. Thus, if the `non-integrated density of states' exists, it has a pike of order $\sim\varepsilon^{-2}$ on an interval of length  $\sim\varepsilon^{2}$ and is bounded otherwise. 
\end{rem}
\begin{rem}
The assumptions on $f$ listed at the beginning of this section are not optimal. First of all, we can assume, for example, that $f$ is non-decreasing when  $x\in [a-h,a+h]$ and $f$ is Lipschitz monotone (meaning $f'(x)>1$) outside of this interval. This allows $f$ to have continuous derivative (or even be infinitely smooth). 
The assumptions on the `number of steps' we need to make to return from the flat piece back to the flat piece  
can also be improved. We plan to come back to this class of examples in a subsequent publication and discuss other effects occurring  here. 
\end{rem}

\begin{thebibliography}{99}
\bibitem{Arnold}  Arnol'd~V., {\it Remarks on perturbation theory for problems of Mathieu type} (Russian), Uspekhi Mat. Nauk 38 (1983), no. 4 (232), 189 -- 203.
\bibitem{Bellissard} Bellissard~J., Lima~R., Scoppola~E., {\it Localization in $\nu$-dimensional incommensurate structures}, Comm. Math. Phys. 88 (1983), no. 4, 465 -- 477.
\bibitem{Eliasson}Eliasson~H., {\it Absolutely convergent series expansions for quasi periodic motions},  Math. Phys. Electron. J. 2 (1996), Paper 4, 33 pp.
\bibitem{FP}Figotin~A., Pastur~L., {\it An exactly solvable model of a multidimensional incommensurate structure}, Comm. Math. Phys. 95 (1984), no. 4, 401 -- 425.
\bibitem{GFP} Grempel~D., Fishman~S., Prange~R., {\it Localization in an incommensurate potential: an exactly solvable model}, Phys. Rev. Lett. 49 (1982), 833 -- 836.
\bibitem{Simon_maryland}Simon~B., {\it Almost periodic Schr\"odinger operators. IV. The Maryland model}, Ann. Phys. 159 (1985), no. 1, 157 -- 183.
\bibitem{Wencai} Jitomirskaya~S., Liu~W., {\it Arithmetic spectral transitions for the Maryland model}, Comm. Pure Appl. Math. 70 (2017), no. 6, 1025 -- 1051.
\bibitem{JK}Jitomirskaya~S., Kachkovskiy~I., {\it All couplings localization for quasiperiodic operators with Lipschitz monotone potentials}, J. Eur. Math. Soc. 21 (2019), no. 3, 777 -- 795.
\bibitem{JY2}Jitomirskaya~S., Yang~F., {\it Pure point spectrum for the Maryland model: a constructive proof}, Ergodic Theory Dynam. Systems (2021), no. 1, pp. 283 -- 294.
\bibitem{JY}Jitomirskaya~S., Yang~F., {\it Singular continuous spectrum for singular potentials}, Comm. Math. Phys. 351 (2017), no. 3, 1127 -- 1135.
\bibitem{Ilya}Kachkovskiy~I., {\it Localization for quasiperiodic operators with unbounded monotone potentials}, J. Funct. Anal 277 (2019), no. 10, 3467 -- 3490.
\bibitem{KPSK}Kachkovskiy~I., Parnovski~L., Shterenberg~R., Krymski~S., {\it Perturbative diagonalisation for Maryland-type quasiperiodic operators with flat pieces}, J. Math. Phys. 62 (2021), no. 6, 063509.
\bibitem{Mastroprietro}Gentile~G., Mastroprietro~V., {\it Anderson Localization for the Holstein Model}, Comm. Math. Phys. 215 (2000), 69 -- 103.
\end{thebibliography}
\end{document}